\newtheorem{theorem}{Theorem}[section]
\newtheorem*{theorem*}{Theorem}
\newtheorem{lemma}{Lemma}[section]
\newtheorem{proposition}{Proposition}[section]
\newtheorem{corollary}{Corollary}[section]
\newtheorem{definition}{Definition}[section]
\newtheorem{claim}{Claim}[section]
\theoremstyle{remark}
\newtheorem{remark}{Remark}[section]
\newcommand{\CC}{\mathds{C}}
\newcommand{\RR}{\mathds{R}}
\newcommand{\NN}{\mathds{N}}
\newcommand{\HH}{\mathds{H}}
\newcommand{\ind}{\mathds{1}}
\newcommand{\VV}{\mathbb{V}}
\newcommand{\EE}{\mathbb{E}}
\newcommand{\PP}{\mathbb{P}}
\begin{document}
\title{Partial sums of biased random multiplicative functions}
\author{M. Aymone }
\author{V. Sidoravicius }
\subjclass[2010]{Primary: 11N37. Secondary: 60F15.}
\keywords{Random multiplicative functions; Probabilistic Number Theory; Riemann Hypothesis}

\maketitle
\begin{abstract}
Let $\mathcal{P}$  be the set of the primes.  We consider a class of random multiplicative functions $f$ supported on the squarefree integers, such that $\{f(p)\}_{p\in\mathcal{P}}$ form a sequence of $\pm1$ valued independent random variables with $\mathbb{E} f(p)<0$,  $\forall p\in \mathcal{P}$.  The function $f$ is called strongly biased (towards classical M\"obius function), if  $\sum_{p\in\mathcal{P}}\frac{f(p)}{p}=-\infty$ \textit{a.s.}, and it is weakly biased if  $\sum_{p\in\mathcal{P}}\frac{f(p)}{p} $ converges \textit{a.s.} Let $M_f(x):=\sum_{n\leq x}f(n)$.   We establish a number of necessary and sufficient conditions for $M_f(x)=o(x^{1-\alpha})$ for some $\alpha>0$, \textit{a.s.},
when $f$ is  strongly or  weakly biased,
and prove that the Riemann Hypothesis holds if and only if $M_{f_\alpha}(x)=o(x^{1/2+\epsilon})$ for all $\epsilon>0$ \textit{a.s.}, for each $\alpha>0$, where $\{f_\alpha \}_\alpha$ is a certain family of weakly biased random  multiplicative functions.
\end{abstract}

\section{Introduction.}
A function $f:\NN\to\CC$ is called multiplicative function if $f(1) = 1$ and $f(nm)=f(n)f(m)$ whenever $n$ and $m$ are coprime. Let $\mathcal{P}$ be the set of the prime numbers.  In this paper we consider a class of multiplicative functions $f$ which are  supported on the {square-free} integers, {\emph{i.e.}} $f(n)=0$ for all $n\in\NN$, for which $\exists \; p\in\mathcal{P}$ such that $p^2| n$. A function $f$ from this class is called random (binary) multiplicative function if $\{f(p)\}_{p\in\mathcal{P}}$ form a sequence of $\pm1$ valued independent random variables.

Let $\mu$ be the M\"obius function, the multiplicative function supported on the square-free integers with $\mu(p)=-1$ $\forall p\in\mathcal{P}$. We say that $f$ is {\it{biased}} (towards $\mu$) if $\EE f(p)<0$ $\forall p\in\mathcal{P}$.
If $f$ is biased and $\sum_{p\in\mathcal{P}}\frac{f(p)}{p}$ converges \textit{a.s.}, we say that $f$ is {\it{weakly biased}};
otherwise, if $\sum_{p\in\mathcal{P}}\frac{f(p)}{p}=-\infty$ \textit{a.s.}, we say that $f$ is {\it{strongly biased}}. In the case $(f(p))_{p\in\mathcal{P}}$ is i.i.d. with $\EE f(2)=0$, we say that $f$ is an unbiased binary random multiplicative function.

Further, for $x\geq 1$, we denote $M_f(x):=\sum_{n\leq x}f(n)$.

A classical result of J.E.Littlewood, \cite{little}, states that the Riemann Hypothesis (RH) holds if and only if the Merten's function $M_\mu(x)=o(x^{1/2+\epsilon})$, $\forall \epsilon>0$. This criterion led A. Wintner to investigate what happens with the partial sums $M_w(x)$ of a random multiplicative function $w$. In \cite{wintner}, A. Wintner proved that if $w$ is unbiased, then $M_w(x)=o(x^{1/2+\epsilon})$, $\forall \epsilon>0$ \textit{a.s.} Since then, many results pursuing the exact order of $M_w(x)$ have been proved \cite{erdosuns,halasz,basquinn,harpergaussian,tenenbaum2013}, and also Central Limit Theorems have been established \cite{houghmult,harpermult,chatterjeemult}.

These results naturally raises a question what can be said for $M_f(x)$ in the case of biased $f$. To begin with, let $f$ be strongly biased and such that the series $\sum_{p\in\mathcal{P}}(1+\EE f(p))$ converges. Then, by the Borel-Cantelli Lemma, the random subset of primes $\{p\in\mathcal{P}: f(p)\neq \mu(p)\}$ is finite \textit{a.s.} In particular, $M_\mu(x)$ and $M_f(x)$ have essentially the same asymptotic behavior. Hence, in this case, by J.E. Littlewood's criterion, we can reformulate RH in terms of the asymptotic behavior of $M_f(x)$. On the other hand, a similar argument shows that if $f$ is weakly biased and such that $\sum_{p\in\mathcal{P}} \EE f(p)$ converges, then $f$ is essentially the unbiased $w$ and hence, as a consequence from A. Wintner's Theorem, $M_f(x)=o(x^{1/2+\epsilon})$ for all $\epsilon>0$ \textit{a.s.}

In this paper we are interested in determining the range of biased $f$ such that we can reformulate RH in terms of the asymptotic behavior of $M_f(x)$, in the case that
the random subset of primes $\{p\in\mathcal{P}: f(p)\neq \mu(p)\}$ is infinite \textit{a.s.} Also, we are interested in determining the range of biased $f$ such that $M_f(x)=o(x^{1/2+\epsilon})$ $\forall \epsilon>0$ \textit{a.s.}

In the case that $f$ is weakly biased, our first result states:
\begin{theorem}\label{alphaa} Let $\alpha>0$ and $f_\alpha$ is such that $\EE f_\alpha(p)=-\frac{1}{p^\alpha}$ $\forall p\in\mathcal{P}$. Then the Riemann hypothesis holds if and only if $M_{f_\alpha}(x)=o(x^{1/2+\epsilon})$ for all $\epsilon>0$ a.s., for each $\alpha>0$.
\end{theorem}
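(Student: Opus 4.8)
The plan is to transfer the question to the random Dirichlet series $F_\alpha(s):=\sum_{n\ge1}f_\alpha(n)n^{-s}$, which converges absolutely on $\Re s>1$, where $F_\alpha(s)=\prod_p(1+f_\alpha(p)p^{-s})$, and to split off its deterministic ``mean'' part. Since $\EE f_\alpha(p)=-p^{-\alpha}$, put $Y_p:=f_\alpha(p)+p^{-\alpha}$, so that $\EE Y_p=0$ and $|Y_p|\le2$; then for $\Re s>\tfrac12$, with an absolute implied constant,
\[
1+f_\alpha(p)p^{-s}=\bigl(1-p^{-(s+\alpha)}\bigr)\Bigl(1+Y_p\,p^{-s}+O\bigl(p^{-2\Re s}\bigr)\Bigr).
\]
Multiplying over $p$, on $\Re s>1$ we obtain
\[
F_\alpha(s)=\frac{R_\alpha(s)}{\zeta(s+\alpha)},\qquad\text{where}\quad R_\alpha(s):=\zeta(s+\alpha)F_\alpha(s)=\prod_p\Bigl(1+Y_p\,p^{-s}+O\bigl(p^{-2\Re s}\bigr)\Bigr).
\]
Here $\EE F_\alpha(s)=1/\zeta(s+\alpha)$, so $R_\alpha$ carries exactly the random fluctuation; the point of the factorization is that the zeros of $\zeta$ are inherited by $F_\alpha$ through $\zeta(s+\alpha)$, while $R_\alpha$ will turn out to be harmless in $\Re s>\tfrac12$.

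The core of the proof is the claim that, almost surely, $R_\alpha$ continues to an analytic, zero-free function on $\Re s>\tfrac12$ which is bounded on every half-plane $\Re s\ge\tfrac12+\delta$ (a $|t|^{o(1)}$ bound would already suffice below). Taking logarithms along the branch real at $+\infty$, $\log R_\alpha(s)=\sum_pY_p\,p^{-s}+\sum_pO(p^{-2\Re s})$, where the second series converges absolutely and locally uniformly on $\Re s>\tfrac12$, while the first is a random Dirichlet series with independent, mean-zero, uniformly bounded coefficients satisfying $\sum_p\EE|Y_p|^2p^{-2\sigma}\le\sum_pp^{-2\sigma}<\infty$ for every $\sigma>\tfrac12$. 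By Kolmogorov's three-series theorem this series converges a.s.\ at each fixed $s$ with $\Re s>\tfrac12$; promoting this to a.s.\ locally uniform convergence (hence analyticity) on $\Re s>\tfrac12$ and to a.s.\ boundedness on each $\Re s\ge\tfrac12+\delta$ is the classical fact that, for random Dirichlet series of this type, the abscissas of convergence, of uniform convergence, and of boundedness coincide almost surely (as in Kahane, \emph{Some Random Series of Functions}). Since each factor $1+f_\alpha(p)p^{-s}$ and $1-p^{-(s+\alpha)}$ is nonzero for $\Re s>0$, the convergent product $R_\alpha=\exp(\log R_\alpha)$ is then a.s.\ analytic and nowhere vanishing on $\Re s>\tfrac12$. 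I expect this step to be the main obstacle, specifically the passage from pointwise a.s.\ convergence to a.s.\ analyticity and a.s.\ vertical growth control; a minor technical nuisance is that $Y_p$ is not symmetric, which is dealt with by a routine symmetrization before applying L\'evy-type maximal inequalities.

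Granting this, the theorem follows. $(\Leftarrow)$ Suppose $M_{f_\alpha}(x)=o(x^{1/2+\epsilon})$ for all $\epsilon>0$, a.s. Partial summation shows $\sum_nf_\alpha(n)n^{-s}$ converges, hence is analytic, on all of $\Re s>\tfrac12$; since $R_\alpha$ is analytic and zero-free there, $\zeta(s+\alpha)^{-1}=F_\alpha(s)R_\alpha(s)^{-1}$ is analytic on $\Re s>\tfrac12$, i.e.\ $\zeta$ has no zero $w$ with $\Re w>\tfrac12+\alpha$. As this holds for every $\alpha>0$, $\zeta$ is zero-free in $\Re w>\tfrac12$, which by the functional-equation symmetry of the nontrivial zeros is the Riemann Hypothesis. $(\Rightarrow)$ Assume RH and fix $\alpha>0$. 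Then $\zeta(s+\alpha)$ is zero-free for $\Re s>\tfrac12-\alpha$, in particular for $\Re s>\tfrac12$, so $F_\alpha(s)=R_\alpha(s)/\zeta(s+\alpha)$ is a.s.\ analytic on $\Re s>\tfrac12$, and on each line $\Re s=\tfrac12+\delta$ it satisfies $|F_\alpha(s)|\le|\zeta(s+\alpha)|^{-1}\,|R_\alpha(s)|\ll_{\delta,\epsilon}|t|^{\epsilon}$ for every $\epsilon>0$, by the standard RH estimate $1/\zeta(w)\ll_\epsilon|\Im w|^{\epsilon}$ on $\Re w\ge\tfrac12+\delta$ (Titchmarsh) together with the a.s.\ boundedness of $R_\alpha$. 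A standard truncated-Perron and contour-shift argument — precisely Littlewood's derivation of $M_\mu(x)\ll x^{1/2+\epsilon}$ from RH \cite{little} — then gives $M_{f_\alpha}(x)\ll_\delta x^{1/2+\delta}$ a.s., hence $M_{f_\alpha}(x)=o(x^{1/2+\epsilon})$ a.s.\ for every $\epsilon>0$; since $\alpha>0$ was arbitrary, this holds for each $\alpha>0$, completing the proof.
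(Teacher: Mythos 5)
Your skeleton is the same as the paper's: your $R_\alpha(s)=\zeta(s+\alpha)F_\alpha(s)$ is exactly the function $\theta$ of Lemma \ref{lema0} (since $\EE F_\alpha(z)=1/\zeta(z+\alpha)$), the ($\Leftarrow$) direction via a.s.\ analyticity of $F_\alpha$ (partial summation) plus a.s.\ non-vanishing analyticity of $R_\alpha$ on $\HH_{1/2}$ is the paper's Claim \ref{equiv1}, and the ($\Rightarrow$) direction via Littlewood's bound for $1/\zeta$ plus growth control of $R_\alpha$ plus Perron is the paper's route as well. The ($\Leftarrow$) half of your argument is sound.

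The genuine gap is the growth control of $R_\alpha$ in the ($\Rightarrow$) direction. The central claim that $R_\alpha$ is a.s.\ \emph{bounded} on each half plane $Re(s)\geq\tfrac12+\delta$ is false, and the ``classical fact'' you invoke (a.s.\ coincidence of the abscissas of convergence and of boundedness) does not hold for these series. Indeed, write $\log R_\alpha(\sigma+it)=\sum_p Y_p p^{-\sigma-it}+O(1)$ with $|Y_p|\asymp 1$. For any fixed $\tfrac12<\sigma<1$, the Dirichlet polynomials $\sum_{p\leq N}Y_p p^{-\sigma-it}$ have, by Kronecker's theorem (the numbers $\log p$ are $\QQ$-linearly independent), supremum over $t\in\RR$ equal to $\sum_{p\leq N}|Y_p|p^{-\sigma}\to\infty$; by Bohr's theorem a Dirichlet series bounded on a half plane converges uniformly on any smaller half plane, which would force these partial sums to be uniformly bounded there --- a contradiction. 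So a.s.\ the abscissa of boundedness of $\log R_\alpha$ is $1$ while its abscissa of convergence is $\tfrac12$, and $|R_\alpha|$ is a.s.\ unbounded on every line $Re(s)=\sigma$ with $\sigma<1$ (its supremum over $|t|\leq T$ in fact grows like a power of $\log T$, consistent with Carlson's theorem cited in the paper). What is true, and what your argument actually needs (as your parenthetical ``$|t|^{o(1)}$ would suffice'' anticipates), is a subpolynomial bound such as $R_\alpha(\sigma+it)\ll\exp\big((\log t)^{1-\sigma}\log\log\log t\big)=t^{o(1)}$ a.s.; but this is not a citation-level fact --- it is the content of the paper's Theorem \ref{marcin} (a Carlson-type argument combining the Marcinkiewicz--Zygmund moment inequality, a Borel--Cantelli rectangle construction, and the Hadamard three-circles theorem), and you neither prove it nor point to a correct substitute. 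Until that estimate is supplied, the implication ``RH $\Rightarrow M_{f_\alpha}(x)=o(x^{1/2+\epsilon})$ a.s.'' is not established; the rest of that direction (Littlewood's $1/\zeta(\sigma+it)=o(t^\delta)$ under RH and the truncated Perron/contour-shift step) is fine.
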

Further, for $f_\alpha$ as in Theorem \ref{alphaa}, the Dirichlet series $F_\alpha(z)=\sum_{n=1}^\infty\frac{f_\alpha(n)}{n^z}$ converges in the half plane $Re(z)>1-\alpha$ \textit{a.s.} In particular, $M_{f_\alpha}(x)=o(x^{1-\alpha+\epsilon})$, $\forall \epsilon>0$ \textit{a.s.} This convergence can be seen as a consequence from the fact that $f_\alpha$ has a small bias (see Theorem \ref{viz1/22} below), and naturally raises the question of whether $F_\alpha(z)$ converges at $z=1-\alpha$ \textit{a.s.} We point that the convergence of $F_\alpha(1-\alpha)$ \textit{a.s.} is necessary for RH, and if it converges, then its probability law is a Dirac measure at $0$, which reveals an interesting dependence relation among the random variables $\{f_\alpha(n)\}_{n\in\NN}$. We established this convergence only when  $0<\alpha<1/3$, although it holds in probability for any $0<\alpha<1/2$. The reason $<1/3$ is related to the Vinogradov-Korobov zero free region for $\zeta$ (the best up to date), combined with certain probabilistic bounds which seems to be optimal (see Proposition \ref{vino}).

In the case that $f$ has a strong bias, we have:
\begin{theorem}\label{alphaa2}
Let $f$ be strongly biased such that, for some fixed $0<\alpha\leq 1/2$, the series $\sum_{p\in\mathcal{P}}\frac{1+f(p)}{p^{1-\alpha+\epsilon}}$ converges $\forall\epsilon>0$ a.s. Then $M_f(x)=o(x^{1-\alpha+\epsilon})$, $\forall\epsilon>0$ a.s. if and only if $M_\mu(x)=o(x^{1-\alpha+\epsilon})$, $\forall \epsilon>0$.
\end{theorem}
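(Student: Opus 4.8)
The plan is to exhibit $f$ as a Dirichlet convolution of $\mu$ with an explicit nonnegative multiplicative function whose Dirichlet series has abscissa of absolute convergence governed precisely by the hypothesis, and then to transfer Mertens‑type bounds in both directions via the hyperbola identity.

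First, set $S:=\{p\in\mathcal{P}:f(p)=1\}$, so that $1+f(p)=2\cdot\ind_{\{p\in S\}}$ and the hypothesis reads: for every $\epsilon>0$, $\sum_{p\in S}p^{-(1-\alpha+\epsilon)}<\infty$ a.s. Since $f$ is multiplicative and supported on the squarefree integers, comparing Euler products (for $\mathrm{Re}(s)$ large) gives
\[
\sum_{n\ge 1}\frac{f(n)}{n^s}=\prod_{p}\Big(1+\frac{f(p)}{p^s}\Big)=\frac1{\zeta(s)}\prod_{p\in S}\frac{1+p^{-s}}{1-p^{-s}}=:\frac{G(s)}{\zeta(s)},
\]
where $G(s)=\sum_n g(n)n^{-s}$ and $g$ is the multiplicative function with $g(p^k)=2$ for $p\in S$, $k\ge1$, and $g(n)=0$ as soon as $n$ has a prime factor outside $S$; in particular $g\ge0$. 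Equivalently, as arithmetic functions $f=g*\mu$, which I would check directly on prime powers. Dually, $1/\zeta(s)=F(s)/G(s)$ yields $\mu=f*h$, where $h$ is the Dirichlet inverse of $g$, i.e.\ the multiplicative function with $h(p^k)=(-1)^k2$ for $p\in S$; note $|h(n)|=g(n)$ for every $n$.

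The crucial point I would isolate is that the hypothesis says exactly that, for each $\delta>0$, a.s.\ the random Euler product $\prod_{p\in S}(1+p^{-(1-\alpha+\delta)})/(1-p^{-(1-\alpha+\delta)})$ converges, hence — since $g\ge0$ — the constant $K_\delta:=\sum_n g(n)n^{-(1-\alpha+\delta)}=\sum_n|h(n)|n^{-(1-\alpha+\delta)}$ is a.s.\ finite. Now assume $M_\mu(y)=o(y^{1-\alpha+\epsilon})$ for every $\epsilon>0$; fixing $\epsilon>0$ and writing $|M_\mu(y)|\le C_{\epsilon/2}\,y^{1-\alpha+\epsilon/2}$ for all $y\ge1$, summing $f=g*\mu$ over the hyperbola $ab\le x$ gives
\[
|M_f(x)|=\Big|\sum_{a\le x}g(a)M_\mu(x/a)\Big|\le C_{\epsilon/2}\,x^{1-\alpha+\epsilon/2}\sum_{a\le x}\frac{g(a)}{a^{1-\alpha+\epsilon/2}}\le C_{\epsilon/2}\,K_{\epsilon/2}\,x^{1-\alpha+\epsilon/2},
\]
which is $o(x^{1-\alpha+\epsilon})$ a.s. Conversely, assume $M_f(y)=o(y^{1-\alpha+\epsilon})$ a.s.\ for every $\epsilon>0$; fix $\epsilon>0$ and, on the a.s.\ event where this holds and $K_{\epsilon/2}<\infty$, choose a random constant with $|M_f(y)|\le C_\omega\,y^{1-\alpha+\epsilon/2}$ for all $y\ge1$. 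Then from $\mu=f*h$,
\[
|M_\mu(x)|\le\sum_{a\le x}|h(a)|\,|M_f(x/a)|\le C_\omega\,x^{1-\alpha+\epsilon/2}\sum_{a\le x}\frac{g(a)}{a^{1-\alpha+\epsilon/2}}\le C_\omega\,K_{\epsilon/2}\,x^{1-\alpha+\epsilon/2};
\]
since the right‑hand side is finite for some $\omega$ in this event while $M_\mu$ is deterministic, this gives $M_\mu(x)=O(x^{1-\alpha+\epsilon/2})=o(x^{1-\alpha+\epsilon})$, and as $\epsilon>0$ was arbitrary both implications are complete.

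There is no deep obstacle here — the whole argument is the convolution identity plus the hyperbola method. The points that will demand care are (i) the measure‑theoretic bookkeeping: upgrading ``$M_f=o(\cdot)$ a.s.'' to a uniform random constant, intersecting with the a.s.\ event $\{K_{\epsilon/2}<\infty\}$, and using that $M_\mu$ is deterministic so an a.s.\ bound becomes unconditional; and (ii) verifying that the convergence of $\sum_p(1+f(p))p^{-(1-\alpha+\epsilon)}$ is precisely the statement $K_{\epsilon/2}<\infty$ a.s., which is what powers both directions. I would also record the elementary facts $f=g*\mu$, $\mu=f*h$, and $|h(p^k)|=g(p^k)=2$ for $p\in S$.
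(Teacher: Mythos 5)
Your argument is correct, and it is genuinely different from---and considerably more elementary than---the proof in the paper. You exploit the fact that, pathwise, $f=g\ast\mu$ and $\mu=f\ast h$ where $g\geq 0$, $|h|=g$, and the hypothesis $\sum_{p}(1+f(p))p^{-(1-\alpha+\epsilon)}<\infty$ is (after the Euler-product comparison, valid since $1-\alpha+\epsilon>1/2$) exactly the statement that $K_\epsilon=\sum_n g(n)n^{-(1-\alpha+\epsilon)}<\infty$ \textit{a.s.}; the hyperbola identity $M_{g\ast\mu}(x)=\sum_{a\leq x}g(a)M_\mu(x/a)$ then transfers $O(x^{1-\alpha+\epsilon/2})$ bounds in both directions, with the only probabilistic content being the intersection of countably many almost sure events and the observation that $M_\mu$ is deterministic, so a bound valid for one $\omega$ in a full-measure set is unconditional. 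The paper instead works entirely on the Dirichlet-series side: it writes $\zeta(z)F_\omega(z)=U^{-1}_{\omega^*}(z)G_{\omega^*}(z)$ via a measure-preserving interval-exchange map $T$ relating the strongly biased $f$ to an unbiased $u$ and a positively biased $g$, controls $U^{-1}$ and $G$ in vertical strips by the Carlson-type bounds of Theorem \ref{marcin}, invokes Titchmarsh's $1/\zeta(\sigma+it)=o(t^\delta)$, and converts subpolynomial growth of the analytic continuation back into $M_f(x)=o(x^{1-\alpha+\epsilon})$ via the classical Dirichlet-series theorem (\cite{tenenbaumlivro}, p.~134). Your route avoids the analytic continuation, the growth estimates, and the coupling/transformation machinery altogether, at the price of nothing: the hypothesis on $\sum_p(1+f(p))p^{-(1-\alpha+\epsilon)}$ is used in precisely the same way (absolute convergence of the ``correction factor'' in $\HH_{1-\alpha}$), and your version makes transparent that the theorem is really a deterministic statement applied $\omega$ by $\omega$. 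The checks you flag---$f=g\ast\mu$ on prime powers, $|h|=g$, the equivalence of the hypothesis with $K_\epsilon<\infty$, and the measure-theoretic bookkeeping---are all routine and go through as you describe.
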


The next question concerns necessary and sufficient conditions on biased $\{f(p)\}_{p\in \mathcal{P}}$, under which $M_f(x)=o(x^{1-\delta})$ for some possibly random $0<\delta<1/2$ \textit{a.s.} Let $F(z):=\sum_{n=1}^\infty \frac{f(n)}{n^z}$, $z\in\CC$, $Re(z)>1$, be the Dirichlet series of $f:\NN\to\CC$. In \cite{koukou}  this problem has been studied in a more general context, where a multiplicative function $f$ may assume values on $\mathbb{U}=\{z\in\CC:|z|\leq 1\}$, and $\{f(p)\}_{p\in\mathcal{P}}$ is not necessarily a random sequence.  In particular, for completely multiplicative functions $f:\NN\to[-1,1]$ Theorem 1.6 of \cite{koukou}, states, that if for some $\delta\in(0,1/3)$ and $Q\geq \exp(1/\delta)$ one has $|M_f(x)|\leq \frac{x^{1-\delta}}{(\log x)^2}$ $\forall x\geq Q$, then there exists $c=c(\delta)$ and $d=d(f)$ such that
\begin{align*}
\sum_{p\leq x}f(p)\log p &\ll \frac{x}{\exp(c\sqrt{\log x})} + x^{1-cd}, \quad \mbox{ if }F(1)\neq 0
%\mbox{ or}
\\
\sum_{p\leq x}(1+f(p))\log p &\ll x^{1-\frac{1}{61\log Q}}, \qquad \qquad \qquad \; \mbox{ if }F(1)=0,
\end{align*}
and it is also applicable to biased random multiplicative functions, by identifying the condition
$F(1)\neq 0$ \textit{a.s.} with weakly biased $f$, and $F(1)=0$ \textit{a.s.} with strongly biased $f$. For the general account on the state of the art we refer reader to  \cite{koukou}, and the references therein, and also to
\cite{tenenbaumlivro}, Chapters II.5 and III.4, and their historical notes.

For weakly biased $f$, we prove:
\begin{theorem}\label{viz1/2} Let $f$ be weakly  biased. If $M_f(x)=o(x^{1-\delta})$ for some possibly random $0<\delta<1/2$ a.s., then there exists $0<\alpha<1/2$ such that the random series $\sum_{p\in\mathcal{P}}\frac{f(p)}{p^{1-\alpha}}$ converges a.s.
\end{theorem}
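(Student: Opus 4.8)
The plan is to reduce the claim to a deterministic statement about $g(p):=-\EE f(p)>0$, and then to prove that statement from a factorization of the Dirichlet series of $f$ combined with Landau's theorem on Dirichlet series with non-negative coefficients. First, for every $\alpha\in(0,1/2)$ the centred series $\sum_p\frac{f(p)-\EE f(p)}{p^{1-\alpha}}$ converges a.s.\ by Kolmogorov's theorem, since the variances sum (as $2-2\alpha>1$); hence $\sum_p\frac{f(p)}{p^{1-\alpha}}$ converges a.s.\ if and only if the deterministic series $\Sigma(1-\alpha)$ converges, where $\Sigma(z):=\sum_p g(p)p^{-z}$. Applying the same decomposition to $\sum_p\frac{f(p)}{p}$ shows that weak bias is equivalent to $\sum_p\frac{g(p)}{p}<\infty$, so the abscissa of convergence $\sigma_g$ of $\Sigma$ satisfies $\sigma_g\le 1$. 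It therefore suffices to prove $\sigma_g<1$: any $\alpha$ with $0<\alpha<\min\{1/2,\,1-\sigma_g\}$ will then work.

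The idea is to transfer information from $F(z)=\sum_n f(n)n^{-z}=\prod_p(1+f(p)p^{-z})$ to the \emph{deterministic} function $\EE F(z)=\sum_n\EE f(n)n^{-z}=\prod_p(1-g(p)p^{-z})$ (the Euler products and the interchange $\EE\sum=\sum\EE$ being legitimate by absolute convergence for $\Re z>1$). By Abel summation, $M_f(x)=o(x^{1-\delta})$ forces $F$ to be analytic on $\{\Re z>1-\delta\}$, so a.s.\ the abscissa of convergence $\sigma_c(F)$ is $<1$; since for each $c\in(0,1)$ the event $\{\sigma_c(F)\le c\}$ is a tail event (it is unchanged by altering finitely many $f(p)$, which multiplies $F$ by a function analytic and non-vanishing on $\{\Re z>0\}$), Kolmogorov's $0$--$1$ law makes $\sigma_c(F)$ a.s.\ a constant, and I fix a deterministic $\alpha_1\in(0,1/2)$ with $\sigma_c(F)\le 1-\alpha_1$ a.s. Now, for $\Re z>1$,
\[
F(z)=\EE F(z)\cdot R(z),\qquad R(z):=\prod_p\frac{1+f(p)p^{-z}}{1-g(p)p^{-z}}.
\]
Expanding the Euler factors of $R$ gives $\log R(z)=\sum_p\bigl(f(p)+g(p)\bigr)p^{-z}+E(z)$, with $E$ absolutely convergent on $\{\Re z>1/2\}$; the variables $f(p)+g(p)$ are independent, mean zero and bounded, so $\sum_p\bigl(f(p)+g(p)\bigr)p^{-z}$ converges a.s.\ at each real $z>1/2$, hence (over a countable dense set of such $z$, using the half-plane convergence of Dirichlet series) a.s.\ on all of $\{\Re z>1/2\}$. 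Therefore $R$ is a.s.\ analytic and, crucially, \emph{non-vanishing} on $\{\Re z>1/2\}$, so for a.e.\ realization $F/R$ is analytic on $\{\Re z>1-\alpha_1\}$; since $F/R=\EE F$ on $\{\Re z>1\}$ and $\EE F$ is deterministic, $\EE F$ continues analytically to $\{\Re z>1-\alpha_1\}$.

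To finish, observe that weak bias gives $\EE F(1)=\prod_p(1-g(p)/p)>0$, so by continuity $\EE F$ is non-vanishing on a disc $\{|z-1|<\alpha_2\}$ for some $0<\alpha_2\le\alpha_1<1/2$, and there $\log\EE F$ is analytic. For $\Re z>1$ one has $\Sigma(z)=H(z)-\log\EE F(z)$, with $H(z):=-\sum_p\sum_{k\ge 2}\frac{g(p)^k}{k\,p^{kz}}$ analytic on $\{\Re z>1/2\}$; the right-hand side thus continues $\Sigma$ analytically to a neighbourhood of $z=1$. As $\Sigma$ has non-negative coefficients, Landau's theorem says its abscissa of convergence $\sigma_g$ is a singular point of $\Sigma$; since $z=1$ is now regular, $\sigma_g\neq 1$, hence $\sigma_g<1$ — exactly what the first step needs.

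I expect the main obstacle to be establishing that the random Euler product $R(z)$ is non-vanishing on $\{\Re z>1/2\}$ almost surely, since this is precisely what lets the random analyticity of $F$ be inherited by the deterministic $\EE F$ without creating spurious poles. It relies on the a.s.\ analyticity of the random Dirichlet series $\sum_p(f(p)+g(p))p^{-z}$ on that half-plane, together with a separate (routine) treatment of the finitely many small primes for which the local expansion $\log r_p(z)=(f(p)+g(p))p^{-z}+O(p^{-2\Re z})$ does not yet hold.
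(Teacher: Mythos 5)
Your proposal is correct and follows essentially the same route as the paper's proof: reduce to the deterministic series $\sum_p \delta_p p^{-z}$ via the Kolmogorov two-series theorem, use Abel summation to get analyticity of $F$ beyond $\Re(z)=1$, exploit the a.s.\ non-vanishing analytic factor $F/\EE F$ on $\HH_{1/2}$ (the paper's $\theta$, your $R$) together with $F(1)\neq 0$ from weak bias to continue the non-negative Dirichlet series analytically past $z=1$, and conclude with Landau's theorem. The only differences are cosmetic (the paper takes a branch of $\log F_\omega$ on $\HH_1\cup B$ and works pointwise in $\omega$ rather than invoking the $0$--$1$ law, which in any case is dispensable since $\EE F$ is deterministic and one good realization suffices).
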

\noindent Hence, if $f$ is as in Theorem \ref{viz1/2}, then there exists $\alpha>0$ such that for each $\epsilon>0$,
$$\sum_{p\leq x}f(p)\log p \ll x^{1-\alpha+\epsilon},\;\quad a.s.$$
In particular, if for each $p\in\mathcal{P}$ we have $\EE f(p)=-\frac{1}{\exp(\sqrt{\log p})}$, then $M_f(x)$ is not $o(x^{1-\delta})$ for any $\delta>0$, \textit{a.s.}

For $f$ strongly biased we prove the following:
\begin{theorem}\label{alphaa1} Let $f$ be strongly biased. If for some fixed $0<\alpha<1/2$, $M_f(x)=o(x^{1-\alpha})$ a.s., then the series $\sum_{p\in\mathcal{P}}\frac{1+f(p)}{p^{1-\alpha+\epsilon}}$ converges $\forall\epsilon>0$ a.s.
\end{theorem}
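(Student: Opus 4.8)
The plan is to pass from the partial sums $M_f(x)$ to the Dirichlet series $F(z)$, and then to extract information about the primes via an Euler product / logarithmic derivative argument. First I would observe that the hypothesis $M_f(x) = o(x^{1-\alpha})$ a.s., together with partial summation, gives that $F(z) = \sum_n f(n)/n^z$ converges absolutely (indeed converges) in the half-plane $\operatorname{Re}(z) > 1-\alpha$, and moreover $F(z)$ extends continuously (in fact analytically, by Abel summation bounds of the form $\sum_{n\le x} f(n)/n^z \ll x^{1-\alpha-\operatorname{Re}(z)+o(1)}$) to $\operatorname{Re}(z) \ge 1-\alpha$, with $F(1-\alpha+it)$ well-defined for all real $t$. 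Since $f$ is strongly biased, $F(1)=0$ a.s., but the point of interest is the behavior near the line $\operatorname{Re}(z)=1-\alpha$. Because $f$ is supported on squarefree integers with $f(p)=\pm1$, on $\operatorname{Re}(z)>1$ we have the Euler product $F(z) = \prod_p (1 + f(p)p^{-z})$, and I would compare this with $1/\zeta(z) = \prod_p(1 - p^{-z})$ by writing
\begin{equation*}
F(z)\zeta(z) = \prod_{p} \frac{1 + f(p)p^{-z}}{1 - p^{-z}} = \prod_{p}\left(1 + \frac{(1+f(p))p^{-z}}{1-p^{-z}}\right) =: G(z).
\end{equation*}
The key point is that the primes with $f(p)=-1$ contribute a trivial factor, so $G(z)$ is built only from the "defect" primes $\{p : f(p)=+1\}$, and $\log G(z) = \sum_p \frac{(1+f(p))}{p^z} + (\text{abs. conv. for }\operatorname{Re}(z)>1/2)$.

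Next I would control the analytic behavior of $G(z)$ in the strip $1-\alpha < \operatorname{Re}(z) \le 1$. Since $F(z)$ is analytic and $o(x^{1-\alpha})$-type bounds hold, and $\zeta(z)$ is meromorphic with its only pole at $z=1$ (simple), the product $F(z)\zeta(z)$ is analytic in $\operatorname{Re}(z) > 1-\alpha$ except possibly at $z=1$; but $F(1)=0$ a.s. kills that pole, so $G$ is analytic on $\operatorname{Re}(z) > 1-\alpha$. Crucially $G$ is also zero-free there: $\zeta$ has no zeros in $\operatorname{Re}(z) \ge 1$, and a zero of $\zeta$ in $1-\alpha < \operatorname{Re}(z) < 1$ would have to be matched by a pole of $F$, which cannot happen. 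Hence $\log G(z)$ is analytic and single-valued in $\operatorname{Re}(z) > 1-\alpha$, and by the growth bounds it is $O(x^{o(1)})$ there — in particular the Dirichlet series $\sum_p (1+f(p)) p^{-z}$, being $\log G(z)$ up to an absolutely convergent piece, converges and is bounded on compact subsets in that half-plane. The desired conclusion — that $\sum_p \frac{1+f(p)}{p^{1-\alpha+\epsilon}}$ converges a.s. for every $\epsilon>0$ — then follows by evaluating (or taking a limit of) this Dirichlet series at real $z = 1-\alpha+\epsilon > 1-\alpha$, using that $1+f(p) \ge 0$ so the series is a series of nonnegative terms and convergence at one point in $\operatorname{Re}(z) > 1-\alpha$ forces it everywhere to the right; it remains to argue convergence at a single such point, which comes from the analyticity/boundedness of $\log G$ at $z = 1-\alpha+\epsilon$ via a Tauberian or a direct positive-term comparison.

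I expect the main obstacle to be the rigorous transfer from the \emph{hypothesis on $M_f(x)$} to \emph{analyticity and polynomial growth of $F$ (hence $G$) up to and slightly left of the critical line $\operatorname{Re}(z)=1-\alpha$}, handling the subtle point that $M_f(x)=o(x^{1-\alpha})$ a.s. is only a one-sided bound without an explicit savings, so one does not immediately get convergence \emph{on} the line but only $F(z) \ll_\epsilon x^{\epsilon}$-type statements strictly to the right; one must then combine this with the pole-cancellation $F(1)=0$ and the zero-free region of $\zeta$ in $1-\alpha<\operatorname{Re}(z)\le 1$ (which is where a genuine number-theoretic input enters, as opposed to the probabilistic side) to conclude that $\log G$ is bounded in the closed-on-the-right strip. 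A secondary technical nuisance is that the failure of absolute convergence of $\sum_p f(p)p^{-z}$ near $\operatorname{Re}(z)=1$ means the Euler product manipulation and the identity $\log(F\zeta) = \sum_p(1+f(p))p^{-z} + (\text{conv.})$ must be justified by grouping primes and using the squarefree support so that only the $+1$-primes survive; once that bookkeeping is in place, the nonnegativity of $1+f(p)$ makes the final extraction of convergence for all $\epsilon>0$ essentially automatic.
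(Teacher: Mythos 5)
Your reduction to the product $G(z)=F(z)\zeta(z)$ founders at the step where you claim $G$ is zero-free in $\operatorname{Re}(z)>1-\alpha$. The sentence ``a zero of $\zeta$ in $1-\alpha<\operatorname{Re}(z)<1$ would have to be matched by a pole of $F$, which cannot happen'' is backwards: since $F$ is analytic (pole-free) there, a zero of $\zeta$ in that strip simply produces a zero of $G$, with no contradiction. Ruling out such zeros is a quasi-Riemann Hypothesis which the theorem does not assume; in the paper it is a \emph{consequence} of this theorem combined with Theorem \ref{alphaa2}, so taking it as ``number-theoretic input'' (as your last paragraph does) is both unavailable and circular. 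Moreover, even granting a zero-free region for $\zeta$, nothing excludes zeros of the random function $F$ itself in $1-\alpha<\operatorname{Re}(z)\le 1$ (the Euler product only protects $\operatorname{Re}(z)>1$, and indeed $F(1)=0$ a.s.\ shows $F$ does vanish at the edge). Without zero-freeness of $G$ near the real segment, $\log G$ is not a single-valued analytic function, the identity $\sum_p(1+f(p))p^{-z}=\log G(z)+(\text{abs.\ conv.})$ has no analytic continuation to exploit, and the Landau-type positivity argument you invoke at the end has nothing to act on. This is the missing idea, not a technical nuisance.

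The paper gets around exactly this obstruction without ever taking a logarithm of $F$ or invoking zero-free regions. It couples $f$, an unbiased $u$, and a multiplicative $g$ with $\EE g(p)=1+\EE f(p)>0$ on the common probability space of Remark \ref{uniform coupling}, and constructs a measure-preserving interval-exchange map $T$ with $\zeta(z)F_\omega(z)=U^{-1}_{T\omega}(z)G_{T\omega}(z)$ on $\HH_1$. The hypothesis plus $F(1)=0$ a.s.\ (pole cancellation, as in your proposal) gives that $\zeta F$ extends analytically to $\HH_{1-\alpha}$ a.s.; by measure preservation and the a.s.\ analyticity of $U^{\pm1}$ this transfers to a.s.\ analytic continuation of $G$. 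Then, writing $G=\theta\,\EE G$ with $\theta$ non-vanishing (Lemma \ref{lema0}), the deterministic Dirichlet series $\EE G$ has non-negative coefficients, so Landau's theorem yields genuine convergence of $\EE G$ in $\HH_{1-\alpha}$, hence of $\sum_p(1+\EE f(p))p^{-z}$ there, and Kolmogorov's two-series theorem upgrades this to the a.s.\ convergence of $\sum_p(1+f(p))p^{-1+\alpha-\epsilon}$. If you want to salvage your outline, you must replace the ``$\log G$ is analytic'' step by some device of this kind that applies positivity (Landau) to a series with non-negative coefficients rather than to $\log(F\zeta)$.
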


Observe that, for fixed $0<\alpha\leq 1/2$ and $f$ strongly biased, if $M_f(x)=o(x^{1-\alpha+\epsilon})$ $\forall\epsilon>0$ \textit{a.s.}, then by Theorem \ref{alphaa1}, $\sum_{p\in\mathcal{P}}\frac{1+f(p)}{p^{1-\alpha+\epsilon}}$ converges $\forall\epsilon>0$ \textit{a.s.} Hence, by Theorem \ref{alphaa2}, $M_\mu(x)=o(x^{1-\alpha+\epsilon})$ $\forall\epsilon>0$, and this implies that the Riemann zeta function $\zeta$ has no zeroes in $\{z\in\CC:Re(z)>1-\alpha\}$. Thus, in the case of strongly biased $f$, in order to provide conditions that guarantee $M_f(x)=o(x^{1-\epsilon})$ for some $\epsilon>0$ \textit{a.s.}, we must assume certain half planes to be zero free regions of $\zeta$.

Let $\ast$ denote the Dirichlet convolution. When $f$ is weakly biased, $f$ can be represented as $f=w\ast g$ (see Remark \ref{uniform coupling} and Claim \ref{series}), where $w$ and $g$ are random multiplicative functions which possibly admit zero values on primes,  $w$ is unbiased, and $g$ is such that
$\EE g(p) = \EE f(p), \, \forall p \in \mathcal{P}$. Since $M_w(x)=o(x^{1/2+\epsilon}), \; \forall \epsilon > 0$ \textit{a.s}, in contrast with the class of strongly biased random multiplicative functions, allows us to derive conditions which do not depend on zero free regions of $\zeta$, and which guarantee that for weakly biased $f$ we get that $M_f(x)=o(x^{1-\alpha})$ for some $\alpha>0$ \textit{a.s.}
\begin{theorem}\label{viz1/22} Let $f$ be weakly biased, such that for some fixed $0<\alpha<1/2$, $\EE f(p)=-\frac{\delta_p}{p^\alpha}$, where $0\leq \delta_p \leq 1$. Then $M_f(x)=o(x^{1-\alpha+\epsilon})$ for all $\epsilon>0$, a.s. If in addition we assume $\limsup{\delta_p}<1$ and $\sum_{p\in\mathcal{P}} \frac{\delta_p}{p}=\infty$, then $M_f(x)$ is not $o(x^{1-\alpha-\epsilon})$ for any $\epsilon>0$, \textit{a.s.}\\
\end{theorem}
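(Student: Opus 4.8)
Everything I would base on the factorization $f=h\ast r$, where $h(n):=\EE f(n)$ is the deterministic multiplicative function, supported on the squarefrees, with $h(p)=-\delta_p/p^\alpha=:-c_p$, and $r$ is the random multiplicative function whose $p$-th Euler factor is $r_p(z)=\frac{1+f(p)p^{-z}}{1-c_pp^{-z}}$; explicitly $r(p^k)=\eta_pc_p^{k-1}$ for $k\ge1$, with $\eta_p:=f(p)+c_p$. Multiplying local Euler factors shows at once that $h\ast r=f$. The $\eta_p$ are independent, centered ($\EE\eta_p=-c_p+c_p=0$) and bounded ($|\eta_p|\le2$), and the prime-power values of $r$ are geometrically small, so a straightforward adaptation of A.~Wintner's Theorem (and of the second-moment method behind it) gives $M_r(x)=o(x^{1/2+\epsilon})$ for all $\epsilon>0$, a.s. This is the coupling of Remark~\ref{uniform coupling} with the unbiased factor written out; I would use $h\ast r$ rather than $w\ast g$.

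For the first assertion I would write $M_f(x)=\sum_{d\le x}h(d)\,M_r(x/d)$. On the a.s.\ event where $|M_r(y)|\le C_\epsilon y^{1/2+\epsilon}$ for all $y$, and using $|h(d)|\le\mu^2(d)d^{-\alpha}$, this gives for any fixed $\epsilon<\tfrac12-\alpha$
\[
|M_f(x)|\le C_\epsilon\,x^{1/2+\epsilon}\sum_{d\le x}\mu^2(d)\,d^{-1/2-\alpha-\epsilon}\ll_\epsilon x^{1/2+\epsilon}\cdot x^{1/2-\alpha-\epsilon}=x^{1-\alpha},
\]
the $d$-sum being deterministic and $\alpha<1/2$ being used. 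Hence $M_f(x)\ll x^{1-\alpha}$ a.s., which is more than the asserted $o(x^{1-\alpha+\epsilon})$ for all $\epsilon>0$, a.s.

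For the lower bound, put $F(z)=\sum_n f(n)n^{-z}$, $D(z)=\sum_n h(n)n^{-z}=\prod_p(1-c_pp^{-z})$ and $R(z)=\sum_n r(n)n^{-z}$, so that $F=DR$. The plan is to prove that the (random) abscissa of convergence $\sigma_c$ of $F$ satisfies $\sigma_c\ge1-\alpha$ a.s.; together with $\sigma_c\le1-\alpha$ from the first part and the classical identity $\sigma_c=\limsup_{x}\frac{\log|M_f(x)|}{\log x}$ (valid once $\sigma_c>0$), this forces $|M_f(x)|>x^{1-\alpha-\epsilon}$ for infinitely many $x$, for every $\epsilon>0$, a.s. The first ingredient is that, a.s., $R$ is analytic and zero-free in $Re(z)>\tfrac12$: its Dirichlet series converges there (Wintner for $r$), its Euler product $\prod_pr_p(z)$ converges there locally uniformly (the terms $\EE\eta_p^2\,p^{-2Re(z)}$ being summable) to a nonzero value (no $r_p(z)$ vanishes for $Re(z)>0$), and the two agree. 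Consequently, on the probability-one event just described, $\sigma_c<1-\alpha$ would make $D=F/R$ analytic in a half-plane $Re(z)>1-\alpha-\delta_0$ for some $\delta_0>0$; i.e.\ $D$ would continue analytically past the real point $z=1-\alpha$.

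The crux is therefore the purely deterministic claim that $D(z)=\prod_p(1-c_pp^{-z})$ is singular at $z=1-\alpha$. Writing $\log D(z)=-P(z+\alpha)+H(z)$ with $P(s)=\sum_p\delta_pp^{-s}$ and $H$ analytic in $Re(z)>\tfrac12-\alpha$, a continuation of $D$ past $z=1-\alpha$ would make $e^{-P(s)}$ analytic at $s=1$; since $\delta_p\ge0$ and $\sum_p\delta_p/p=\infty$ give $P(s)\to+\infty$ as $s\to1^+$, that continuation would vanish to a finite integer order $m\ge1$ at $s=1$, forcing $P(s)=m\log\frac1{s-1}+(\text{analytic at }1)$. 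Comparing with $\log\zeta(s)=\log\frac1{s-1}+(\text{analytic at }1)$ and $\log\zeta(s)=\sum_pp^{-s}+(\text{analytic in }Re(s)>\tfrac12)$, this would make $\sum_p(m-\delta_p)p^{-s}$ analytic at $s=1$; but $\sum_p(m-\delta_p)p^{-s}$ is a Dirichlet series with nonnegative coefficients ($\delta_p\le1\le m$) that diverges at $s=1$ (as $m-\delta_p\ge\theta>0$ for large $p$, with $\theta$ chosen so that $\limsup_p\delta_p\le1-\theta$), so by Landau's Theorem it is singular at $s=1$---a contradiction. Hence $\sigma_c\ge1-\alpha$ a.s., which gives the assertion. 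The hypothesis $\limsup_p\delta_p<1$ is indispensable here: for $\delta_p\equiv1$ one has $D(z)=1/\zeta(z+\alpha)$, which does continue. I expect the two steps needing the most care to be (i) the Wintner-type bound for $r$ with the local-uniform convergence and non-vanishing of its Euler product in $Re(z)>\tfrac12$, and (ii) the transfer---via the identity theorem and $F=DR$---of the singularity from the deterministic $D$ to the random $F$.
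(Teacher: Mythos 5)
Your plan is correct in outline, but it follows a genuinely different route from the paper's. For the upper bound the paper uses the uniform coupling to split $f=u\ast h$ with \emph{both} factors random ($u(p)\in\{-1,0,1\}$ centered, $h(p)\in\{-1,0\}$ nonzero only with probability $\delta_p p^{-\alpha}$), then Rademacher--Menshov for $u$, the Kolmogorov two-series theorem for $|h|$, the convolution theorem and Kronecker's lemma; you instead factor out the deterministic mean, $f=\EE f\ast r$ (so your $R$ is exactly the paper's $\theta=F/\EE F$ from Lemma \ref{lema0}), and estimate $M_f(x)=\sum_{d\le x}\EE f(d)\,M_r(x/d)$ directly, which even yields the slightly sharper bound $O(x^{1-\alpha})$. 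For the lower bound the paper shows that analyticity of $F$ past $1-\alpha$ would force $\EE F$ to be analytic with a zero at $1-\alpha$, contradicts this with $\zeta(1+\epsilon)\,\EE F(1-\alpha+\epsilon)=\exp\big(\sum_p(1-\delta_p)p^{-1-\epsilon}\big)e^{O(1)}\to\infty$, and then upgrades ``probability $<1$'' to probability $0$ via the tail $0$--$1$ law (Corollary \ref{caldal}); you transfer the hypothetical continuation to the deterministic factor $D=\EE F$, kill it with Landau's theorem applied to $\sum_p(m-\delta_p)p^{-s}$, and bypass the $0$--$1$ law because a single $\omega$ in a positive-probability event already continues the deterministic $D$; you then convert $\sigma_c\ge 1-\alpha$ into the non-$o$ statement via $\sigma_c=\limsup_x\log|M_f(x)|/\log x$ instead of the paper's partial-summation packaging. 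Both contradictions rest on the same two inputs ($\sum_p\delta_p/p=\infty$ forces a zero at $1-\alpha$, $\limsup\delta_p<1$ forbids one of any finite order), so the structure is parallel, but the decomposition and the closing arguments are genuinely yours.

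One step needs more than the advertised ``straightforward adaptation of Wintner'': your $r$ is not supported on the squarefree integers, and its values are \emph{not} orthogonal --- $\EE\, r(m)r(n)\neq 0$ whenever $m\neq n$ have the same radical, e.g. $\EE\, r(p)r(p^2)=c_p\,\EE\eta_p^2>0$ --- so neither Wintner's theorem nor Rademacher--Menshov applies verbatim to $\{r(n)\}_{n\in\NN}$. The standard repair is one more splitting, $r=e\ast\kappa$, where $e$ is squarefree-supported with $e(p)=\eta_p$ (centered, orthogonal on distinct squarefree integers, with $\EE\, e(n)^2\le 4^{\nu}$ for $\nu$ the number of prime factors of $n$, hence $\ll_\epsilon n^\epsilon$, so Rademacher--Menshov gives a.s.\ convergence of its Dirichlet series in $Re(z)>1/2$), while $\kappa$ has Euler factors $1+O(c_p p^{-2z})$ supported on $p^k$, $k\ge2$, hence a Dirichlet series converging absolutely in $Re(z)>1/2$; the convolution theorem and Kronecker then give $M_r(x)=o(x^{1/2+\epsilon})$ a.s., and the same computation furnishes the local-uniform convergence and non-vanishing of the Euler product of $R$ that your lower bound needs. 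Similarly, the passage from ``$e^{-P(s)}$ continues and vanishes to order $m$ at $s=1$'' to ``$\sum_p(m-\delta_p)p^{-s}$ is analytic at $s=1$'' should be written out (exponentiate: $e^{Q(s)}$ equals $\zeta(s)^m(s-1)^m$ times a function analytic and nonvanishing near $s=1$, then take the branch of the logarithm that is real on $(1,1+\eta)$); this is routine but it is where the branch bookkeeping lives. With those two points supplied, your proof goes through.
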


Let $g,h:\NN\to \{z\in\CC: |z|\leq 1\}$ be multiplicative functions. For $x\geq 1$ consider the function
$$\mathbb{D}(g,h,x)^2:=\sum_{p\leq x} \frac{1-Re (g(p)\overline{h}(p))}{p}.$$
The function $\mathbb{D}$ measures the ``distance'' up to $x$ between multiplicative functions, and we informally say that
$h$ pretends to be $g$ if $\mathbb{D}(g,h,x)$ is small compared with $x$. We refer reader to \cite{sound} for the general account and interesting results regarding this function.

Let $f$ and $g$ be binary random multiplicative functions. We say that $f$ and $g$ are uniformly coupled if for each $p\in\mathcal{P}$ we have $\PP(f(p)\neq g(p))=|\PP(f(p)=-1)-\PP(g(p)=-1)|$. For a construction of this coupling we refer reader to Remark \ref{uniform coupling}.
For such $f,g$ and $1/2<\sigma\leq 1$ we define
$$\mathbb{D}_\sigma(f,g)^2:=\sum_{p\in\mathcal{P}} \frac{1-f(p)g(p)}{p^\sigma}.$$
Note that $\mathbb{D}_\sigma(f,g)$ may be $\infty$ and that $\mathbb{D}_{\sigma_1}(f,g)\leq \mathbb{D}_{\sigma_2}(f,g)$ whenever $\sigma_2\leq\sigma_1$. Moreover,
$$\EE \mathbb{D}_\sigma(f,g)^2=\sum_{p\in\mathcal{P}}\frac{2\PP(f(p)\neq g(p))}{p^\sigma},$$
and hence, by the Kolmogorov two series Theorem, $\mathbb{D}_\sigma(f,g)<\infty$ \textit{a.s.} if and only if $\sigma>1/2$ and satisfies $\EE \mathbb{D}_\sigma(f,g)^2<\infty$. In particular, if $w$ is unbiased and $f_\alpha$ is as in Theorem \ref{alphaa}, and if $w$ and $f_\alpha$ are uniformly coupled, then for each $\sigma>1-\alpha$, $\mathbb{D}_\sigma(w,f_\alpha)<\infty$ \textit{a.s.}

In this point of view, if we assume that $f$ is strongly biased, and if for some $0<\alpha<1/2$ we have $M_f(x)=o(x^{1-\alpha})$ \textit{a.s.}, by Theorem \ref{alphaa1}, for each $\sigma>1-\alpha$ we obtain that  $\mathbb{D}_\sigma(f,\mu)<\infty$ \textit{a.s.} Further, for  strongly biased $f$ such that for each $\sigma>1-\alpha$,   $\mathbb{D}_\sigma(f,\mu)<\infty$  \textit{a.s.}, by Theorem \ref{alphaa2}, $M_\mu(x)=o(x^{1-\alpha+\epsilon})$ $\forall\epsilon>0$ if and only if $M_f(x)=o(x^{1-\alpha+\epsilon})$ $\forall\epsilon>0$ \textit{a.s.}

On the other hand, let $f$ be weakly biased and $w$ unbiased, and assume that $f$ and $w$ are uniformly coupled. By Theorem \ref{viz1/2}, if we assume that for some fixed small $\delta>0$ we have $M_f(x)=o(x^{1-\delta})$ \textit{a.s.}, then we obtain $0<\alpha<1/2$ such that for all $\sigma>1-\alpha$, $\mathbb{D}_\sigma(w,f)<\infty$ \textit{a.s.}
Further, the half plane $Re(z)>1-\alpha$ is a zero free region for the corresponding Dirichlet series of $f$, \textit{a.s.} This naturally raises a question  what can be said for the class of weakly biased $f$ such that, for some fixed $0<\alpha<1/2$, $F(z)=\sum_{n=1}^\infty \frac{f(n)}{n^z}$ converges in the half plane $Re(z)>1-\alpha-\eta$, for some possibly random $\eta>0$ \textit{a.s.}, and in the closed  half plane $Re(z)\geq 1-\alpha$, $F$ has at most a single zero, and possibly multiple, at $\beta=1-\alpha$, \textit{a.s.}

In the case that $F$ has at most a single and simple zero at $\beta$ \textit{a.s.}, we prove the following result:
\begin{theorem}\label{pretentious} Let $f$ be weakly  biased such that for some fixed $0<\alpha<1/2$, for each prime $p$,
$\EE f(p)=-\frac{\delta_p}{p^\alpha}$, where $0\leq \delta_p\leq 1$.  Let $w$ be unbiased and $f_\alpha$ as in Theorem \ref{alphaa}, and assume that $w,f_\alpha$ and $f$ are uniformly coupled. If $M_f(x)=o(x^{1-\alpha-\eta})$ for some possibly random $\eta>0$ a.s., then there exists $\epsilon>0$ such that:\\
\noindent a) $\mathbb{D}_{1-\alpha-\epsilon}(f,w)\;\,<\infty$ \textit{a.s.}, if $\sum_{p\in \mathcal{P}}\frac{\EE f(p)}{p^{1-\alpha}}>-\infty$;\\
\noindent b) $\mathbb{D}_{1-\alpha-\epsilon}(f,f_\alpha)<\infty$ \textit{a.s.}, if $\sum_{p\in \mathcal{P}}\frac{\EE f(p)}{p^{1-\alpha}}=-\infty$. Further, in this case, $\zeta$ has no zeros in the half plane $Re(z)>1-\alpha-\epsilon$.
\end{theorem}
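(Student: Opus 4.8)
The plan is to pass to the Dirichlet series $F(z)=\sum_n f(n)n^{-z}$ and exploit the factorization $f=w\ast g$ supplied by Remark \ref{uniform coupling} and Claim \ref{series}, where $w$ is unbiased and $g$ is multiplicative with $\mathbb{E} g(p)=\mathbb{E} f(p)$. Since $w,f_\alpha,f$ are uniformly coupled, one may take $g(p)=f(p)-w(p)$, and because $\mathbb{E} f(p)<0=\mathbb{E} w(p)$ the coupling forces $g(p)=-2\,\mathds{1}[f(p)\neq w(p)]\in\{0,-2\}$; similarly $f(p)-f_\alpha(p)=2\,\mathds{1}[f(p)\neq f_\alpha(p)]\geq 0$ since $\mathbb{E} f_\alpha(p)\leq\mathbb{E} f(p)$. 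By Wintner's theorem $M_w(x)=o(x^{1/2+\epsilon})$ a.s., and writing $\log W(z)=\sum_p w(p)p^{-z}-\tfrac12\log\zeta(2z)+(\text{a.s.\ analytic for }\mathrm{Re}(z)>1/3)$ one sees $W$ is a.s.\ analytic and zero-free on $\mathrm{Re}(z)>1/2$, so $\log F=\log W+\log G$ with $\log W$ harmless there.

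Next I would expand $\log G$: writing $G=F/W$ and collapsing the even-order terms, $\log G(z)=\sum_p g(p)\operatorname{arctanh}(p^{-z})=\sum_p g(p)p^{-z}+E(z)$, where $E$ collects the $O(p^{-3\mathrm{Re}(z)})$ tails; since $\mathbb{E}|g(p)|=\delta_p p^{-\alpha}\ll p^{-\alpha}$, both $E$ and the centered sum $\sum_p(g(p)-\mathbb{E} g(p))p^{-z}$ converge a.s.\ locally uniformly on $\mathrm{Re}(z)>(1-\alpha)/2$. Hence, with $\mathbb{E} f(p)=-\delta_p p^{-\alpha}$,
\[
\log F(z)=S(z)-\sum_p\frac{\delta_p}{p^{z+\alpha}},\qquad S(z):=\log W(z)+E(z)+(\text{centered sum})\ \text{a.s.\ analytic for }\mathrm{Re}(z)>\tfrac12 .
\]
Splitting $\sum_p\delta_p p^{-(z+\alpha)}=\log\zeta(z+\alpha)-\tilde\psi(z+\alpha)-B(z)$ with $B(z):=\sum_p(1-\delta_p)p^{-(z+\alpha)}$ (nonnegative coefficients) and $\tilde\psi$ the prime-power correction, one gets the master identity
\[
\log\zeta(z+\alpha)=\tilde\psi(z+\alpha)+S(z)+B(z)-\log F(z),\qquad\text{equivalently}\qquad F(z)=\frac{\Phi(z)\,e^{B(z)}}{\zeta(z+\alpha)},
\]
with $\Phi$ a.s.\ analytic and zero-free on $\mathrm{Re}(z)>1/2$.

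Now the hypothesis enters. $M_f(x)=o(x^{1-\alpha-\eta})$ a.s.\ makes $F$ analytic (pole-free) on $\mathrm{Re}(z)>1-\alpha-\eta$. In case (b), $\sum_p\delta_p/p=\infty$ means $\sum_p\delta_p p^{-(z+\alpha)}$ has abscissa of convergence exactly $1-\alpha$, hence by Landau's theorem blows up to $+\infty$ as $z\downarrow 1-\alpha$ along the reals; since $S$ stays bounded there, $\log F(s)\to-\infty$, so $F(1-\alpha)=0$. Comparing the orders of the $\log$-singularities at $z=1-\alpha$ in the master identity — $\log\zeta(z+\alpha)$ contributes $-\log\!\big(z-(1-\alpha)\big)$ from the pole of $\zeta$, $\log F$ contributes $m\log\!\big(z-(1-\alpha)\big)$ with $m\geq 1$ the order of the zero, and $B$ has nonnegative coefficients so cannot tend to $-\infty$ — forces $m=1$ and makes $B$ analytic at $z=1-\alpha$. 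Landau's theorem applied to $B$ then gives its abscissa of convergence $<1-\alpha$, i.e.\ $\sum_p(1-\delta_p)/p^{1-\epsilon}<\infty$ for some $\epsilon>0$; by the Kolmogorov three-series theorem this is precisely $\sum_p\mathbb{P}(f(p)\neq f_\alpha(p))p^{-(1-\alpha-\epsilon)}<\infty$, i.e.\ $\mathbb{D}_{1-\alpha-\epsilon}(f,f_\alpha)^2=2\sum_p\mathds{1}[f(p)\neq f_\alpha(p)]p^{-(1-\alpha-\epsilon)}<\infty$ a.s. Moreover $e^{B(z)}$ is now analytic and zero-free on $\mathrm{Re}(z)>1-\alpha-\epsilon$, so from $F(z)=\Phi(z)e^{B(z)}/\zeta(z+\alpha)$ the function $\zeta(z+\alpha)^{-1}=F(z)/(\Phi(z)e^{B(z)})$ is analytic on $\mathrm{Re}(z)>1-\alpha-\min(\epsilon,\eta)$, i.e.\ $\zeta$ has no zeros in the corresponding half-plane; shrinking $\epsilon$ yields one $\epsilon>0$ serving both conclusions. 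Case (a) is the same argument with $f_\alpha$ replaced by $w$: there $\sum_p\delta_p/p<\infty$ makes $\log F$ analytic at $1-\alpha$ (so $F(1-\alpha)\neq 0$), which forces the nonnegative-coefficient series $\sum_p\delta_p p^{-(z+\alpha)}$ analytic at $1-\alpha$, hence $\sum_p\delta_p/p^{1-\epsilon}<\infty$ and $\mathbb{D}_{1-\alpha-\epsilon}(f,w)<\infty$ a.s.

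The main obstacle is the analytic bookkeeping at $z=1-\alpha$: one must pin down the exact order of the zero of $F$ and the exact type of singularity of the nonnegative-coefficient series $B$ (so that Landau applies), and carry all of this through on a single event of full probability — which requires showing the error series $E$ and the centered prime sums converge a.s.\ locally uniformly on $\mathrm{Re}(z)>1/2$ strongly enough that $S$ and $\Phi$ are genuinely a.s.\ analytic there. A secondary delicate point is matching the width of the zero-free region to the $\epsilon$ produced by the pretentious distance, which is why only the existence of a (possibly small, possibly random) $\epsilon>0$ is asserted.
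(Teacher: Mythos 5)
Your argument is correct in outline. For part (a) it is essentially the paper's own route: the paper states that (a) is obtained by adapting the proof of Theorem \ref{viz1/2}, i.e.\ $F(1-\alpha)\neq 0$, a branch of $\log F$ on a simply connected set of the form $\HH_{1-\alpha}$ union a small disc centred at $1-\alpha$, and then Landau's theorem applied to the nonnegative series $\sum_p \delta_p p^{-(z+\alpha)}$ --- exactly your case (a). For part (b) and the zero-free region, however, you take a genuinely different route. The paper (Theorem \ref{pretentiousk} with $k=1$) introduces an auxiliary function $g$ biased towards $|\mu|$ with $\EE g(p)=1-\EE f(p)f_\alpha(p)$, uses the interval-exchange, measure-preserving map $T$ from the proof of Theorem \ref{alphaa1} to transport the a.s.\ analytic continuation of $\varphi=F/F_\alpha$ to $\psi=G/U$, and then invokes Lemma \ref{left} to get convergence of $\sum_p g(p)p^{-\sigma}$; the nonvanishing of $G$ then yields the zero-free region. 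You instead stay with $\log F$ itself, isolate the deterministic nonnegative series $B(z)=\sum_p(1-\delta_p)p^{-(z+\alpha)}$, and apply Landau to $B$ after a singularity comparison at $1-\alpha$. Your route buys something the paper's does not: Theorem \ref{pretentiousk} must \emph{assume} the multiplicity of the zero of $F$ at $1-\alpha$, whereas your comparison ($B\geq 0$ on the real axis versus $B(\sigma)=(m-1)\log(\sigma-1+\alpha)+O(1)$) \emph{derives} $m=1$ from $\delta_p\leq 1$; it also treats (a) and (b) uniformly. The paper's coupling argument, conversely, avoids any local analysis of $F$ at $1-\alpha$.

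Two caveats. First, ``forces $m=1$ and makes $B$ analytic at $z=1-\alpha$'' hides the one step that cannot be extracted from real-axis information alone: boundedness of $B$ on $(1-\alpha,1-\alpha+r)$ only gives convergence of the series at $s=1-\alpha$, which is perfectly compatible with $1-\alpha$ being a singularity, and Landau's theorem requires genuine analytic continuation past that point. You must run the branch-of-logarithm argument of Theorem \ref{viz1/2}: once $m=1$, the function $e^{B}=F\cdot\zeta(\cdot+\alpha)/\Phi$ is analytic and nonvanishing on the simply connected set $\HH_{1-\alpha}$ union a small disc centred at $1-\alpha$, so a branch of its logarithm, which agrees with $B$ up to $2\pi i k$ on $\HH_{1-\alpha}$, extends $B$ analytically there. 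You flag this as the ``main obstacle,'' but it needs to be written out; as stated it is the only real gap. Second, your zero-free region is $Re(s)>1-\min(\epsilon,\eta)$, i.e.\ $\zeta(z+\alpha)\neq 0$ for $Re(z)>1-\alpha-\min(\epsilon,\eta)$, which matches what the paper's Theorem \ref{pretentiousk} actually proves ($\HH_{1-\epsilon}$); the half plane $Re(z)>1-\alpha-\epsilon$ written in the statement of Theorem \ref{pretentious} is wider by $\alpha$ and is not reached by either argument, so you are right not to claim it.
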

In particular, if for all $p\in\mathcal{P}$ we have $\delta_p=\frac{1}{\exp(\sqrt{\log p})}$, or if for all $p\in\mathcal{P}$ we have $\delta_p=1-\frac{1}{\exp(\sqrt{\log p})}$, then by Theorem \ref{pretentious}, $M_f(x)$ is not $o(x^{1-\alpha-\epsilon})$ for any $\epsilon>0$ \textit{a.s.}

We conclude by mentioning that, for each fixed $k\in\NN$, a similar result to Theorem \ref{pretentious} b) holds, in the case that $F(z)=\sum_{n=1}^\infty\frac{f(n)}{n^z}$ has a single zero at $\beta=1-\alpha$, of multiplicity $k$, and $\delta_p$ assume values in the interval $[0,k]$, see Theorem \ref{pretentiousk}.

\bigskip

The paper is organized as follows. In Section \ref{definicoes} we set up the main notations and tools from Probability and Analytic Number Theory. In Section \ref{random dirichlet} we consider the problem of bounding convergent random Dirichlet Series $\sum_{n=1}^\infty \frac{X_n}{n^z}$ ($Re(z)>1/2$) in vertical strips, where $\{X_n\}_{n\in\NN}$ belongs to a certain class of sequences of random variables. In Section \ref{provas} we prove all the main results and in Section \ref{concluding} we conclude with some remarks.

%converges in distribution to the standard Gaussian distribution provided that $k=k(x)=o(\log\log x)$. Moreover, the same result does not holds if for some $0<\epsilon<R$ we have $\epsilon\log\log x \leq k(x)\leq R \log\log x$. The same result holds

%On the other hand if for some fixed $\epsilon>0$, $k\geq \epsilon \log\log x$ then $\frac{M_w^{(k)}(x)}{\sqrt{\EE|M_w^{(k)}(x)|^2}}$

%On the other hand $\frac{M_w(x)}{\sqrt{\EE|M_w(x)|^2}}$ does not have Gaussian approximation when $x\to\infty$, due to A.Harper \cite{harpermult}.  does not converge in distribution to a Gaussian random variable.

\section{Preliminaries.}\label{definicoes}
\noindent \textit{Notations from Probability Theory.} $(\Omega,\mathcal{F},\PP)$  stands for a probability space. Given a set $E\in\mathcal{F}$, the random variable $\ind_E:\Omega\to\{0,1\}$ stands for the indicator function of $E$, that is, $\ind_E(\omega)=1$ if $\omega\in E$ and $\ind_E(\omega)=0$ otherwise. Given an square integrable random variable $Y:\Omega\to\RR$:
\begin{align*}
\EE Y:=&\int_{\Omega}Y(\omega)\PP(d\omega),\\
\VV Y:=&\EE Y^2- (\EE Y)^2.
\end{align*}
\textit{Notations from Complex Analysis.} A set of the form $\HH_a:=\{z\in\CC:\,Re(z)>a\}$ where $a\in\RR$ is called half plane. Let $R_1\subset R_2$ be two open connected sets of $\CC$ and $h:R_1\to\CC$ be an analytic function. We say that $h$ has analytic extension to $R_2$ if there exists an analytic function $\overline{h}:R_2\to\CC$ such that for all $z\in R_1$ we have that $\overline{h}(z)=h(z)$.
\begin{definition} Let $S\subset\CC$. A map $f:S\times\Omega\to\CC$ is called a random function if $\omega\in \Omega\mapsto f(s,\omega)$ is a complex valued random variable for each  fixed $s\in S$, and $s\in S \mapsto f(s,\omega)$ is a function of one complex variable for each fixed $\omega\in\Omega$.
\end{definition}
\noindent Let $f:S\times\Omega\to\CC$ be a random function. For each fixed $\omega\in\Omega$, $f_\omega$ denotes the function $f_\omega:S\to\CC$ given by $f_\omega(s):=f(s,\omega)$.
\begin{definition} Let $S\subset\CC$ be an open connected set and $f:S\times\Omega\to\CC$ a random function. We say that $f$ is a random analytic function if the set of elements $\omega\in\Omega$, for which $f_\omega:S\to\CC$ is analytic, contains  a set $\Omega^*\in\mathcal{F}$ such that $\PP(\Omega^*)=1$.
\end{definition}
\noindent Let $(X_k)_{k\in\NN}$ be a sequence of independent random variables such that $\VV X_k^2<\infty$ for all $k$. We define
\begin{align}
\sigma_1=&\inf\bigg{\{}0<\sigma\leq\infty:\mbox{ the series }\sum_{k=1}^\infty \frac{\EE X_k}{k^\sigma}\mbox{ converges } \bigg{\}},\label{sigma1}\\
\sigma_2=&\inf\bigg{\{}0<\sigma\leq\infty:\mbox{ the series }\sum_{k=1}^\infty \frac{\VV X_k}{k^{2\sigma}}\mbox{ converges }\bigg{\}}\label{sigma2}.
\end{align}
\begin{proposition}\label{kolmogorovlandau} Let $(X_k)_{k\in\NN}$ be a sequence of independent random variables and $\sigma_1$ and $\sigma_2$ be as in $(\ref{sigma1})$ and $(\ref{sigma2})$. Assume that $\sigma_c=\max\{\sigma_1,\sigma_2\}<\infty$. Then $F:\HH_{\sigma_c}\times\Omega\to\CC$ given by $F(z):=\sum_{k=1}^\infty\frac{X_k}{k^z}$ converges for each $z\in\HH_{\sigma_c}$ and it is a random analytic function.
\end{proposition}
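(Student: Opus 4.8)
The statement to prove is Proposition~\ref{kolmogorovlandau}: under the hypothesis $\sigma_c = \max\{\sigma_1,\sigma_2\} < \infty$, the series $F(z) = \sum_{k=1}^\infty X_k/k^z$ converges for every $z \in \HH_{\sigma_c}$ and defines a random analytic function on $\HH_{\sigma_c}$.

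The plan is to split each term as $X_k/k^z = \EE X_k / k^z + (X_k - \EE X_k)/k^z$ and handle the two pieces separately. The deterministic Dirichlet series $\sum_k \EE X_k/k^z$ is classical: by the definition of $\sigma_1$ and the standard theory of (ordinary) Dirichlet series, $\sum_k \EE X_k/k^\sigma$ converges for every real $\sigma > \sigma_1$, hence (by Abel summation / the Landau–type argument for Dirichlet series with real exponents) the series converges for every complex $z$ with $Re(z) > \sigma_1$ and is analytic there; in particular it converges and is analytic on $\HH_{\sigma_c} \subset \HH_{\sigma_1}$. For the centered part, write $Y_k = X_k - \EE X_k$, so the $Y_k$ are independent, mean zero, with $\EE Y_k^2 = \VV X_k$. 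For a fixed real $\sigma > \sigma_2$, the series $\sum_k Y_k/k^\sigma$ has independent mean-zero summands with $\sum_k \EE (Y_k/k^\sigma)^2 = \sum_k \VV X_k / k^{2\sigma} < \infty$, so by the Kolmogorov one-series theorem (the $L^2$-bounded case of the Kolmogorov two-series theorem, already invoked in the excerpt) the series $\sum_k Y_k/k^\sigma$ converges a.s.

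Next I would upgrade this pointwise-in-$\sigma$ a.s.\ convergence to almost sure convergence \emph{simultaneously} for all $z$ in the half-plane, together with analyticity. The clean way is a Kolmogorov–Kac–Steinhaus style argument: fix a real $\sigma_0$ with $\sigma_2 < \sigma_0 < \sigma_c$ if $\sigma_2 < \sigma_c$ (or just any $\sigma_0 < \sigma_c$ with the series of variances summable, using $\sigma_2 \le \sigma_c$), note that a.s.\ the tail sums $S_N = \sum_{k>N} Y_k/k^{\sigma_0}$ tend to $0$, hence $T_N := \sup_{n \ge N}\bigl|\sum_{N < k \le n} Y_k/k^{\sigma_0}\bigr| \to 0$ a.s.\ (Kolmogorov's maximal inequality gives the needed control). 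Then for $z$ with $Re(z) \ge \sigma_1'$ for any $\sigma_1' > \sigma_0$, partial summation against the bounded monotone factor $k^{\sigma_0 - z}$ converts convergence of $\sum Y_k/k^{\sigma_0}$ into locally uniform convergence of $\sum Y_k/k^z$ on compact subsets of $\HH_{\sigma_0}$: explicitly, for $M < n$,
\[
\Bigl|\sum_{M < k \le n} \frac{Y_k}{k^z}\Bigr| \le C(z)\, \sup_{M \le j \le n}\Bigl|\sum_{M < k \le j}\frac{Y_k}{k^{\sigma_0}}\Bigr|,
\]
with $C(z)$ locally bounded on $\HH_{\sigma_0}$, which is the uniform-Cauchy criterion. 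Since each partial sum is entire, the locally uniform limit is analytic on $\HH_{\sigma_0}$, on an event of probability one. Taking a sequence $\sigma_0^{(m)} \downarrow \sigma_c$ (if $\sigma_2 < \sigma_c$; the case $\sigma_2 = \sigma_c$ is handled by $\sigma_0^{(m)} \downarrow \sigma_c$ from the right, intersecting countably many probability-one events) and intersecting, we get a single probability-one event on which $\sum_k Y_k/k^z$ is analytic on all of $\HH_{\sigma_c}$. Adding back the deterministic analytic piece $\sum_k \EE X_k/k^z$ gives that $F$ is a random analytic function on $\HH_{\sigma_c}$.

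The main obstacle is the passage from ``a.s.\ convergence at each fixed real point $\sigma > \sigma_c$'' to ``a.s.\ locally uniform convergence on the whole half-plane,'' i.e.\ getting one probability-one event that works for all $z$ at once; this is exactly where one needs Kolmogorov's maximal inequality (to control the supremum of tail partial sums, not just individual tails) combined with Abel summation against $k^{\sigma_0 - z}$ to transfer convergence on the real axis to a half-plane. The remaining ingredients — the classical abscissa-of-convergence theory for the deterministic series, and the bookkeeping with a countable exhausting sequence of half-planes — are routine. One should also note the harmless subtlety that the hypothesis is stated with $\VV X_k^2 < \infty$, but only $\VV X_k < \infty$ (equivalently $\EE X_k^2 < \infty$) is actually used, since $\sum \VV X_k / k^{2\sigma}$ is the relevant series; this is consistent with the $L^2$ theory invoked.
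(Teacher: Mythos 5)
Your proof is correct and is essentially the paper's argument: almost sure convergence at a countable sequence of real abscissae $\sigma_j\downarrow\sigma_c$ via the Kolmogorov two--series theorem, followed by the classical fact that convergence of a Dirichlet series at one point forces locally uniform convergence (hence analyticity) in the half-plane to its right. Your splitting into mean and centered parts, and your explicit Abel-summation derivation of that classical fact (where the maximal inequality is not even needed, since pathwise convergence at $\sigma_0$ already gives the Cauchy control on tail partial sums), are only cosmetic variations on the paper's citation of the two-series theorem applied directly to $X_k/k^{\sigma_j}$ together with Apostol's Theorems 11.8 and 11.11.
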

\begin{proof}
Let $\{c_k\}_{k=1}^\infty$ be a sequence of complex numbers and $\sum_{k=1}^\infty \frac{c_k}{k^z}$ be its Dirichlet series, where $z\in\CC$. A classical result in the Theory of the Dirichlet series (see \cite{apostol}, Theorems 11.8 and 11.11) states that if the series $\sum_{k=1}^\infty \frac{c_k}{k^{z}}$ converges for $z_0=\sigma_0+it_0$ then it converges for all $z\in\HH_{\sigma_0}$ and also uniformly on compact subsets of this half plane. Thus the function $z\in\HH_{\sigma_0}\mapsto\sum_{k=1}^\infty \frac{c_k}{k^z}$ is analytic. The Kolmogorov two series Theorem states that if $\{Y_k\}_{k=1}^\infty$ is a sequence of independent random variables such that $\sum_{k=1}^\infty \EE Y_k$ and $\sum_{k=1}^\infty \VV Y_k$ converge then $\sum_{k=1}^\infty Y_k$ converges \textit{a.s.} Thus the assumption that $\sigma_c<\infty$ implies that for each $\sigma>\sigma_c$ both series $\sum_{k=1}^\infty \frac{\EE X_k}{k^\sigma}$ and $\sum_{k=1}^\infty \frac{\VV X_k}{k^{2\sigma}}$ converge. Hence by the Kolmogorov two series Theorem, for each $j\in\NN$ and $\sigma_j=\sigma_c+j^{-1}$, each event $\Omega_j:=[F(\sigma_j)\mbox{ converges }]$ has $\PP(\Omega_j)=1$ so as $\Omega^*:=\bigcap_{j=1}^\infty\Omega_j$. By the referred properties of convergence of a Dirichlet series, we obtain that for each $\omega\in\Omega^*$ the Dirichlet series $F_\omega(z)$ converges for each $z\in\HH_{\sigma_c}$ and uniformly in compact subsets of this half plane. We then conclude that $F$ is a random analytic function. \end{proof}
\noindent \textit{Notations from Number Theory.} In the sequel $\mathcal{P}$ stands for the set of the prime numbers and $p$ for a generic element of $\mathcal{P}$. Given $d,n\in\NN$, $d|n$ and $d\nmid n$ means that $d$ divides and that $d$ do not divides $n$, respectively. The M\"obius function is denoted by $\mu$ and its partial sums by $M_\mu(x):=\sum_{k\leq x}\mu(k)$.
\begin{definition} A random function $f:\NN\times\Omega\to\CC$ is called random multiplicative function if $f(1)=1$,
\begin{equation}\label{multiplicative}
f(n)=|\mu(n)|\prod_{p|n}f(p)\quad(n\geq 2),
\end{equation}
and $\{f(p)\}_{p\in\mathcal{P}}$ is a sequence of $\pm 1$ independent random variables.
\end{definition}
\begin{lemma}\label{lema0} Let $f$ be a random multiplicative function and for each $z\in\HH_1$ let $F(z):=\sum_{k=1}^\infty \frac{f(k)}{k^z}$. Then:\\
i) $\EE F(z):\HH_1\to\CC$ is analytic and $F:\HH_1\times\Omega$ is a random analytic function. Moreover for all $z\in\HH_1$, $\EE F(z)\neq0$ and $F_\omega(z)\neq0$ for each $\omega\in\Omega$.\\
ii) There exists a non-vanishing random analytic function $\theta:\HH_{1/2}\times\Omega\to\CC$ such that for each $z\in\HH_1$, $\theta(z)=\frac{F(z)}{\EE F(z)}$.\\
iii) The random analytic function $\theta$ is given by
\begin{equation}\label{thetta}
\theta(z)=\exp\bigg{(}\sum_{p\in\mathcal{P}}\frac{f(p)-\EE f(p)}{p^z}\bigg{)}\exp(A(z)),
\end{equation}
where $A:\HH_{1/2}\times\Omega\to\CC$ is a random analytic function such that for all $\sigma\geq\sigma_0>\frac{1}{2}$ there exists $C=C(\sigma_0)$ such that $|A(\sigma+it)|\leq C$ \textit{a.s.}\\
\end{lemma}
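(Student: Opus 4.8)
The plan is to treat the three parts in order, leveraging the Euler product for $F$ on $\HH_1$ and then extracting the analytic continuation of the ratio $F/\EE F$.

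For part (i), I would start from the absolute convergence of $F(z)$ on $\HH_1$, which holds pathwise since $|f(k)| = |\mu(k)| \le 1$; combined with Proposition \ref{kolmogorovlandau} (or directly, since the series converges absolutely and uniformly on $\{\mathrm{Re}(z) \ge 1 + \delta\}$ for every $\delta > 0$), this gives that $F$ is a random analytic function on $\HH_1$ and that $\EE F(z) = \sum_k \EE f(k)/k^z$ is analytic there. Next, because $f$ is multiplicative and supported on squarefree integers, for $\mathrm{Re}(z) > 1$ we have the pathwise Euler product
\begin{equation*}
F_\omega(z) = \prod_{p \in \mathcal{P}} \left(1 + \frac{f(p,\omega)}{p^z}\right),
\end{equation*}
which converges and is nonzero since each factor $1 + f(p)p^{-z}$ is nonzero for $\mathrm{Re}(z) > 1$ (as $|f(p)p^{-z}| = p^{-\mathrm{Re}(z)} < 1$) and $\sum_p |f(p)|/p^{\mathrm{Re}(z)} < \infty$. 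Likewise, using independence of $\{f(p)\}_p$,
\begin{equation*}
\EE F(z) = \prod_{p \in \mathcal{P}} \left(1 + \frac{\EE f(p)}{p^z}\right),
\end{equation*}
which is nonzero for the same reason ($|\EE f(p)| \le 1$). This proves (i).

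For part (ii), I would work with the logarithm. On $\HH_1$, both $F_\omega$ and $\EE F$ are nonzero analytic functions, so $\theta := F/\EE F$ is a nonvanishing random analytic function there, and taking logs of the two Euler products gives, for $\mathrm{Re}(z) > 1$,
\begin{equation*}
\log \theta_\omega(z) = \sum_{p \in \mathcal{P}} \left[\log\!\left(1 + \frac{f(p,\omega)}{p^z}\right) - \log\!\left(1 + \frac{\EE f(p)}{p^z}\right)\right].
\end{equation*}
Expanding each logarithm as a power series and isolating the linear term, write $\log(1+w) = w + r(w)$ with $|r(w)| \le |w|^2$ for $|w| \le 1/2$; then the above becomes
\begin{equation*}
\log \theta_\omega(z) = \sum_{p} \frac{f(p,\omega) - \EE f(p)}{p^z} + A(z,\omega),
\end{equation*}
where $A(z,\omega) := \sum_p \left[ r(f(p,\omega)p^{-z}) - r(\EE f(p)\, p^{-z}) \right]$. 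For the second series, the Kolmogorov two-series theorem (as in Proposition \ref{kolmogorovlandau}) applies to $X_p := f(p) - \EE f(p)$: we have $\EE X_p = 0$ and $\VV X_p = 1 - (\EE f(p))^2 \le 1$, so $\sum_p X_p p^{-\sigma}$ converges a.s. for every $\sigma > 1/2$, and hence $\sum_p (f(p) - \EE f(p))p^{-z}$ is a random analytic function on $\HH_{1/2}$ (uniformly on compacts, by the Dirichlet-series convergence properties quoted in Proposition \ref{kolmogorovlandau}). For the remainder series $A$, note $|r(f(p)p^{-z})| \le p^{-2\mathrm{Re}(z)}$ and similarly for the $\EE f(p)$ term, so on $\{\mathrm{Re}(z) \ge \sigma_0 > 1/2\}$ the series for $A$ converges absolutely and uniformly, dominated by $2\sum_p p^{-2\sigma_0} =: C(\sigma_0) < \infty$; hence $A$ extends to a random analytic function on $\HH_{1/2}$ with $|A(\sigma + it)| \le C(\sigma_0)$ a.s. on that region. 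Exponentiating gives formula (\ref{thetta}), which furnishes the analytic continuation of $\theta$ to $\HH_{1/2}$ as a nonvanishing (being an exponential) random analytic function, completing (ii) and (iii) simultaneously.

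The main subtlety I expect is the bookkeeping needed to make the "a.s." statements uniform: one must choose a single full-measure event $\Omega^*$ on which $\sum_p (f(p)-\EE f(p))p^{-z}$ converges for all $z \in \HH_{1/2}$ (obtained, as in the proof of Proposition \ref{kolmogorovlandau}, by intersecting the full-measure events for $\sigma_j = 1/2 + 1/j$ and then invoking the fact that Dirichlet-series convergence at a point propagates to the whole open half-plane to its right, uniformly on compacta), so that $\theta_\omega$ is genuinely analytic on all of $\HH_{1/2}$ for $\omega \in \Omega^*$. Everything else — the Euler-product manipulations, the power-series estimates $|r(w)| \le |w|^2$, the domination bounds for $A$ — is routine. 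One should also double check the edge case where some $\EE f(p) = -1$ (allowed, since the hypothesis is only $\EE f(p) < 0$): then $f(p) = -1$ a.s. and the corresponding factor $1 + \EE f(p)p^{-z} = 1 - p^{-z}$ is still nonzero on $\HH_1$ and contributes a harmless term to $A$, so no difficulty arises.
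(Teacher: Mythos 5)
Your proposal is correct and follows essentially the same route as the paper: Euler products for $F_\omega$ and $\EE F$ on $\HH_1$, logarithmic expansion splitting off the linear term $\sum_p (f(p)-\EE f(p))p^{-z}$ (handled on $\HH_{1/2}$ via the Kolmogorov two-series theorem as in Proposition \ref{kolmogorovlandau}) from an absolutely and uniformly convergent quadratic remainder giving the bounded function $A$, followed by exponentiation. The paper merely packages the expansion as a deterministic claim about multiplicative functions $h:\NN\to[-1,1]$ applied separately to $f_\omega$ and $\EE f$, which is a cosmetic difference.
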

\begin{proof}  Let $h:\NN\to[-1,1]$ be a multiplicative function supported on the square free integers. Let $g_k:\CC\to\CC$ be given by $g_k(z)=\frac{h(k)}{k^z}$. Then for each $k\in\NN$, $g_k$ is analytic and satisfies $|g_k(z)|\leq\frac{1}{k^\sigma}$ where $\sigma=Re(z)$. Thus for each $\sigma>1$, $\sum_{k=1}^\infty g_k(z)$ is a series of complex analytic functions that converges uniformly in the set $\{z\in\CC:Re(z)\geq \sigma\}$ and hence uniformly on compact subsets of $\HH_1$. This gives that the Dirichlet series $z\in\HH_1\mapsto\sum_{k=1}^\infty \frac{h(k)}{k^z}$ is analytic. The same argument gives that $z\in\HH_1\mapsto\sum_{p\in\mathcal{P}} \frac{h(p)}{p^z}$ is analytic.
\begin{claim}\label{cl1} Let $h$ be as above. Then for each $z\in\HH_1$
$$\sum_{k=1}^\infty \frac{h(k)}{k^z}=\exp\bigg{(}\sum_{p\in\mathcal{P}}\frac{h(p)}{p^z}\bigg{)}\exp(A(h,z)),$$
where $z\in\HH_{1/2}\mapsto A(h,z)$ is analytic and uniformly bounded in the set $\{z\in\CC:Re(z)\geq \sigma_0\}$ for each real $\sigma_0>1/2$.
\end{claim}
\noindent \textit{Proof of the claim.} The Dirichlet series $\sum_{k=1}^\infty \frac{h(k)}{k^z}$ has Euler product representation (see \cite{apostol}, Theorem 11.6): For each $z\in\HH_1$
\begin{equation}\label{pr1}
\sum_{k=1}^\infty \frac{h(k)}{k^z}=\prod_{p\in\mathcal{P}}\bigg{(}1+\frac{h(p)}{p^z}\bigg{)}.
\end{equation}
Since the Taylor series $\log(1+x)=\sum_{m=1}^\infty \frac{(-1)^{m+1}}{m}x^m$ converges absolutely for $|x|<1$, we obtain for each real $\sigma>1$ that $\log\big{(}1+\frac{h(p)}{p^\sigma}\big{)}=\frac{h(p)}{p^\sigma}+A_p(\sigma)$ where $A_p(\sigma):=\sum_{m=2}^\infty \frac{(-1)^{m+1}}{m}\frac{h(p)^\sigma}{p^{m\sigma}}$. Let $z\in\CC$ be such that $Re(z)=\epsilon>0$. Observe that for large $p$
$$|A_p(z)|\leq \sum_{m=2}^\infty \frac{1}{p^{m\epsilon}}=\frac{1}{p^{\epsilon}(p^{\epsilon}-1)}\sim \frac{1}{p^{2\epsilon}}.$$
Hence $A_p(z)$ is analytic in $\HH_0$ and there is $C>0$ such that $|A_p(z)|\leq \frac{C}{p^{2\epsilon}}$ $\forall p\in\mathcal{P}$ ($Re(z)=\epsilon$). Since for $\sigma>{1/2}$ the series $\sum_{p\in\mathcal{P}}\frac{1}{p^{2\sigma}}$ is summable, the series of complex analytic functions $A(h,z):=\sum_{p\in\mathcal{P}}A_p(z)$ converges absolutely and uniformly in the set $\{z\in\CC:\Re(z)\geq \sigma\}$ for each $\sigma>1/2$, and hence uniformly on compact subsets of $\HH_{1/2}$. This gives that $A(h,z)$ is analytic in $\HH_{1/2}$ and for each $\sigma_0>1/2$ it is uniformly bounded by some constant $C=C(\sigma_0)$ in the set $\{z\in\CC:Re(z)\geq\sigma_0\}$. This gives the desired properties for $z\in\HH_{1/2}\mapsto A(h,z)$ and the following formula for each $\sigma>1$:
$$\log\prod_{p\in\mathcal{P}}\bigg{(}1+\frac{h(p)}{p^\sigma}\bigg{)}=\sum_{p\in\mathcal{P}}\frac{h(p)}{p^\sigma}+A(h,\sigma).$$
This formula combined with $(\ref{pr1})$, gives for each $\sigma>1$:
\begin{equation}\label{cl11}
\sum_{k=1}^\infty\frac{h(k)}{k^\sigma}=\exp\bigg{(}\sum_{p\in\mathcal{P}}\frac{h(p)}{p^\sigma}\bigg{)}\exp(A(h,\sigma)).
\end{equation}
Let $F_1$ and  $F_2$ be two complex analytic functions defined in some open connected set $U\subset\CC$ such that $F_1(z_k)=F_2(z_k)$ where $\{z_k\}_{k=1}^\infty\subset U$ is a convergent sequence whose limit point is $z\in U$. Then $F_1=F_2$ (see \cite{conway}, Corollary 3.8 and 3.9). This gives that $(\ref{cl11})$ holds for all $z\in\HH_1$, since the left side and the right side of this equation are the restriction of complex analytic functions to the set $\{\sigma\in\RR:\sigma>1\}$, finishing the proof of the claim.

\noindent \textit{Proof of }\textit{i}) For each $\omega\in\Omega$, $n\mapsto f_\omega(n)$ is a multiplicative function supported on the square free integers. Since $\{f(p)\}_{p\in\mathcal{P}}$ is a sequence of independent random variables, $n\mapsto \EE f(n)$ also is a multiplicative function supported on the square free integers. Thus, by claim \ref{cl1}, for each $\omega\in\Omega$, $F_\omega:\HH_1\to\CC$ and $\EE F:\HH_1\to\CC$ are non-vanishing complex analytic functions, completing the proof of \textit{i}.

\noindent \textit{Proof of }\textit{ii}) and \textit{iii}) Claim \ref{cl1} gives the following formula for each $\omega\in\Omega$ and $z\in\HH_1$:
$$\frac{F_\omega(z)}{\EE F(z) }=\exp\bigg{(}\sum_{p\in\mathcal{P}}\frac{f_\omega(p)-\EE f(p)}{p^z}\bigg{)}\exp(A(f_\omega,z)-A(\EE f,z)),$$
where $z\in\HH_{1/2}\mapsto A(f_\omega,z)$ and $z\in\HH_{1/2}\mapsto A(\EE f,z)$ are complex analytic functions which are uniformly bounded in the sets $\{z\in\CC:Re(z)\geq \sigma\}$, for each $\sigma>1/2$. Hence $A:\HH_{1/2}\times\Omega\to\CC$ given by $A_\omega(z)=A(f_\omega,z)-A(\EE f,z)$ is the desired random analytic function of iii. By Proposition \ref{kolmogorovlandau}, $z\in\HH_{1/2}\mapsto \sum_{p\in\mathcal{P}}\frac{f(p)-\EE f(p)}{p^z}$ is a random analytic function and hence its exponential also is. This gives the desired properties of the random analytic function $\theta$. \end{proof}
\section{Bounding random Dirichlet series in vertical strips.}\label{random dirichlet}
Let $F(z)=\sum_{n=1}^\infty \frac{f(k)}{k^z}$ ($z\in\CC$) be a general Dirichlet series, and let $\sigma_c\leq\sigma_a<\infty$ be its abscissas of conditional and of absolute convergence respectively. A classical result (see \cite{tenenbaumlivro} page 119, Theorem 15) states that if $F$ has finite $\sigma_c$, then for each $\sigma_0>\sigma_c$ and $\epsilon>0$, uniformly for $\sigma_0\leq\sigma\leq\sigma_c+1$ we have that $F(\sigma+it)\ll |t|^{1-(\sigma-\sigma_c)+\epsilon}$.

In this section we consider the problem of bounding a random Dirichlet series $\sum_{k=1}^\infty\frac{X_k}{k^{\sigma+it}}$ as $t\to\infty$ with fixed $\sigma>1/2$, where $\{X_k\}_{k\in\NN}$ are centered random variables not necessarily independent. If $\{X_k\}_{k\in\NN}$ is a sequence of independent and identically distributed random variables such that $\PP(X_1=-1)=\PP(X_1=1)=\frac{1}{2}$, in \cite{carlson} F.Carlson proved that, for suitable $\{a_k\}_{k\in\NN}\subset \CC$, for each $\sigma>1/2$ we have that $\sum_{k=1}^\infty \frac{a_kX_k}{k^{\sigma+it}}=o(\sqrt{\log t})$ \textit{a.s.} Following the same line of reasoning we prove the following result:
\begin{theorem}\label{marcin} Let $\{X_k\}_{k\in\NN}$ be a sequence of centered and uniformly bounded random variables. Denote for a complex $z$, $F(z):=\sum_{k=1}^\infty \frac{X_k}{k^z}$. Let $1/2<\sigma_0\leq 1$. If $\{X_k\}_{k\in\NN}$ are either a) independent, b) a martingale difference or c) $\rho^*$-mixing\footnote{For the definition of $\rho^*$-mixing see the proof of Theorem \ref{marcin} below.}, then uniformly for all $\sigma_0\leq\sigma\leq1$:
\begin{equation}\label{rds}
F(\sigma+it)\ll (\log t)^{\vartheta(1-\sigma)} \log\log t,\quad a.s.,
\end{equation}
where $\vartheta=1$ in the case a), $\vartheta=2$ in the case b) and $\vartheta=3$ in the case c). Moreover, if $X_k=0$ for all non prime $k$ then the term
$\log \log t $ in $(\ref{rds})$ can be substituted by $\log\log\log t$.
\end{theorem}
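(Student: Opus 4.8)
The plan is to render Carlson's argument \cite{carlson} quantitative. One approximates $F$ on the strip by a Dirichlet polynomial, splits the polynomial dyadically in $n$, and for each dyadic block balances the trivial triangle-inequality bound against a maximal inequality whose logarithmic loss reflects the dependence structure (Hoeffding for a), a martingale maximal inequality for b), a Rosenthal-type inequality for $\rho^*$-mixing for c)); summing the two geometric series in the dyadic parameter produces the exponent $\vartheta(1-\sigma)$, the crossover of the two bounds sitting at block-length $\asymp(\log t)^{\vartheta}$, and the geometric summation over the $\asymp\log\log t$ relevant blocks accounts for the factor $\log\log t$.

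\textbf{Step 1: reduction to a Dirichlet polynomial.} Fix $\sigma_1\in(1/2,\sigma_0)$ and put $\eta:=\sigma_0-\sigma_1$. As the $X_k$ are centered and uniformly bounded, $\sum_k\VV(X_k k^{-\sigma_1})\le\|X\|_\infty^2\zeta(2\sigma_1)<\infty$, whence $\sum_k X_k k^{-\sigma_1}$ converges \textit{a.s.}: by the Kolmogorov two series theorem in case a), by $L^2$-bounded martingale convergence in case b), and via a M\'oricz--Shao maximal inequality plus the Cauchy criterion for $\rho^*$-mixing in case c). Consequently the tails $\rho_N:=\sup_{M\ge N}|\sum_{N<k\le M}X_k k^{-\sigma_1}|\to0$ \textit{a.s.}, and $F$ is a random analytic function on $\HH_{1/2}$ (an analogue of Proposition \ref{kolmogorovlandau}). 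A partial summation gives, for $\sigma\ge\sigma_0$ and $|t|\le T$,
\[
\Bigl|\,\sum_{n>N}\frac{X_n}{n^{\sigma+it}}\,\Bigr|\ \le\ \rho_N\Bigl(N^{-\eta}+\frac{1+|t|}{\eta}N^{-\eta}\Bigr)\ \ll\ \frac{1+|t|}{\eta}\,\rho_N\,N^{-\eta}.
\]
Choosing $N=N_T:=\lceil T^{2/\eta}\rceil$ makes this $\ll T^{-1}\rho_{N_T}\to0$ \textit{a.s.}, so it suffices to bound $\sup\{|P_T(\sigma+it)|:\sigma_0\le\sigma\le1,\ |t|\le T\}$ with $P_T(s):=\sum_{n\le N_T}X_n n^{-s}$, along $T=2^m$; a Borel--Cantelli argument over $m$ then yields \eqref{rds} for all $t$.

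\textbf{Step 2: dyadic blocks, two bounds, and summation.} Write $P_T=\sum_{k=0}^{K_T}Q_k$ with $Q_k(s)=\sum_{2^k<n\le2^{k+1}}X_n n^{-s}$ (and $n=1$ appended to $Q_0$), $K_T\asymp\log N_T\asymp\log T$. Each block satisfies, uniformly in $\sigma\in[\sigma_0,1]$: (i) $\sup_t|Q_k(\sigma+it)|\le\|X\|_\infty 2^{k(1-\sigma)}$, trivially; and (ii) $\sup_{|t|\le T}|Q_k(\sigma+it)|\ll 2^{k(1/2-\sigma)}(\log T+k)^{\vartheta/2}$, outside an event whose probability is summable over $m$, over the $\asymp\log T$ blocks and over a $1/\log T$-net of $[\sigma_0,1]$. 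For (ii) one fixes $\sigma$, views $Q_k(\sigma+it)$ as a random ``trigonometric'' polynomial in $t$ whose $L^2(\mathrm dt)$-mean over long intervals is $\asymp2^{k(1/2-\sigma)}$, bounds a single value of $t$ by the concentration inequality of the relevant regime (subgaussian under independence, $\vartheta=1$; via Burkholder--Doob and a M\'oricz maximal inequality for martingales, $\vartheta=2$; via a Rosenthal-type moment bound for $\rho^*$-mixing, $\vartheta=3$), and passes to $\sup_{|t|\le T}$ through a net of $\asymp T\cdot2^{k(1-\sigma)}$ points --- legitimate because $|Q_k'(\sigma+it)|\ll k\,2^{k(1-\sigma)}$ --- together with a union bound, which is what produces the factor $\log T+k$; uniformity in $\sigma$ follows from the $\sigma$-net and the analogous estimate for $\partial_\sigma Q_k$ (or Cauchy's inequality for the analytic $F$). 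The two bounds cross at $k\asymp\vartheta\log_2\log T$; using (i) below this threshold and (ii) above it and summing the two geometric series in $k$ gives $\sup_{|t|\le T}|P_T(\sigma+it)|\ll(\log T)^{\vartheta(1-\sigma)}\log\log T$ for $\sigma<1$ (the $\log\log T$ being the $(1-\sigma)^{-1}$ of the first geometric sum, capped by the number of relevant blocks) and $\ll\log\log T$ for $\sigma=1$; with Step 1 this is \eqref{rds}. When $X_k=0$ for composite $k$, the block $Q_k$ runs over the $\asymp2^k/k$ primes in $(2^k,2^{k+1}]$: (i) improves to $\ll2^{k(1-\sigma)}/k$ and (ii) to $\ll2^{k(1/2-\sigma)}k^{-1/2}(\log T+k)^{\vartheta/2}$; the exponent $\vartheta(1-\sigma)$ is unchanged, but at $\sigma=1$ the first sum becomes $\sum_{k\lesssim\vartheta\log_2\log T}k^{-1}\asymp\log\log\log T$, the stated improvement.

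\textbf{Main obstacle.} The technical heart is Step 2(ii): securing, in each of the three dependence regimes, a maximal inequality for $\sup_{|t|\le T}|Q_k(\sigma+it)|$ with the sharp power of the logarithm, and tracking how that logarithmic factor distributes between the block length $2^k$ and the height $T$ --- it is exactly this bookkeeping that, after summation over $k$, upgrades the cruder $\sqrt{\log t}$ (which a plain $L^2$-mean plus subharmonicity argument would give) to the exponent $1-\sigma$. The independent case is a routine Salem--Zygmund/entropy estimate; the martingale and $\rho^*$-mixing cases require the corresponding weaker maximal and moment inequalities, and the deficiencies in their logarithmic factors are precisely what the parameter $\vartheta$ records.
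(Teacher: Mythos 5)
Your proposal is correct in its essentials but takes a genuinely different route from the paper. The paper runs Carlson's complex-analytic scheme: the moment inequalities (Marcinkiewicz--Zygmund, Burkh\"older, Bryc--Smole\'nski, encoded as $\lambda(q)\asymp q^{\vartheta/2}$) are applied to the \emph{whole} series at an abscissa $\sigma'=\tfrac12+\tfrac{\epsilon}{2}$; Markov's inequality, an averaging choice of the horizontal edges of a rectangle, the Cauchy integral formula and Borel--Cantelli then give an a.s.\ bound $\ll\psi(t)\,v(\log\psi(t))$ on rectangles hugging the line $Re(z)=\tfrac12$, with $\psi(t)=\lambda(\log t)\asymp(\log t)^{\vartheta/2}$; finally the Hadamard three-circles theorem interpolates between this and the absolute-convergence bound near $Re(z)=1$, and it is this interpolation that manufactures the exponent $2-2\sigma$ on $\psi(t)$, i.e.\ $(\log t)^{\vartheta(1-\sigma)}$, while the factor $v(\log\psi(t))^2$ accounts for $\log\log t$ (resp.\ $\log\log\log t$ on primes). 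You instead truncate, decompose dyadically, and extract the same exponent from the crossover between the trivial block bound and a square-root-cancellation block bound obtained from the \emph{same} moment inequalities via Markov with $q\asymp\log T+k$ plus a net and union bound, the $\log\log t$ arising as the number of sub-crossover blocks. The two arguments are equivalent in strength because both are powered by the identical input $\lambda(q)\asymp q^{\vartheta/2}$; yours is more elementary (no contour integration, no three-circles) and makes the origin of each logarithm visible, whereas the paper gets uniformity in $\sigma$ for free from analyticity rather than from nets.

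Two details in your sketch need repair, both routine. First, a $1/\log T$-net in $\sigma$ is too coarse for the large blocks: since $|\partial_\sigma Q_k(\sigma+it)|\ll k\,2^{k(1-\sigma)}$, absorbing the variation between net points into the target $2^{k(1/2-\sigma)}(\log T+k)^{\vartheta/2}$ forces a spacing $\asymp 2^{-k/2}/k$ for block $k$ (alternatively, Abel summation in $n$ against the monotone weight $n^{-(\sigma-\sigma_0)}$ reduces all $\sigma\in[\sigma_0,1]$ to maximal partial sums at the single abscissa $\sigma_0$); either fix is harmless because the net size enters only logarithmically into the union bound. Second, in the prime-supported case you verify the sub-crossover sum only at $\sigma=1$; what is needed is $\sum_{k\le K}2^{k(1-\sigma)}/k\ll 2^{K(1-\sigma)}\log K$ uniformly for $\sigma\in[\sigma_0,1]$, which does hold (split the sum at $k=\min\{K,(1-\sigma)^{-1}\}$) but is not automatic from the endpoint case.
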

In particular, if $\{X_k-\EE X_k\}_{k\in\NN}$ is as above, and the series $\EE F(\sigma)=\sum_{k=1}^\infty \frac{\EE X_k}{k^{\sigma}}$ converges for each $\sigma>1/2$, then
the same statement holds if we substitute $F(\sigma+it)$ by $F(\sigma+it)-\EE F(\sigma+it)$ in the left side of (\ref{rds}).

\noindent Next we will start the proof of Theorem \ref{marcin}. We restrict ourselves to sequences $\{X_k\}_{k\in\NN}$ which satisfy the following conditions:\\
\textit{i}) For all $k$, $\EE X_k=0$ and $|X_k|\leq C$ for some constant $C>0$; \\
\textit{ii}) The random series $\sum_{k=1}^\infty \frac{X_k}{k^z}$ converges for all $z\in\HH_{1/2}$, \textit{a.s.};\\
\textit{iii}) There exists a constant $\gamma>0$ and a increasing function $\lambda:[0,\infty)\to [1,\infty)$ such that $\lim_{t\to\infty}\lambda(t)=\infty$, for all $a,b\geq 0$, $\lambda(a+b)\leq e^{\gamma a}\lambda(b)$, and such that the following inequality holds for all $q\geq2$ for all real numbers $\alpha_1,...,\alpha_n$, for each $n\in\NN$:
$$\bigg{\{} \EE \bigg{|} \sum_{k=1}^n \alpha_kX_k \bigg{|}^q \bigg{\}}^{\frac{1}{q}}\leq \lambda(q)\bigg{\{}\EE\bigg{(} \sum_{k=1}^n |\alpha_kX_k|^2 \bigg{)}^{q/2}\bigg{\}}^{\frac{1}{q}}.$$
\begin{lemma}\label{boundrds} Assume that $\{X_k\}_{k\in\NN}$  satisfies conditions i)-iii) above. Let $\psi(t):=\lambda(\log t)$ and $v(\epsilon^{-1})^2:=\sum_{k=1}^\infty \frac{\ind_{[\EE|X_k|>0]}}{k^{1+\epsilon}}.$ Then for each $\sigma>1/2$, uniformly for all $x\in[\sigma,1]$:
$$\sum_{k=1}^\infty\frac{X_k}{k^{x+it}}\ll \psi(t)^{2-2x} v(\log \psi(t))^2,\; a.s.$$
\end{lemma}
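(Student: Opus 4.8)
The plan is to bound the random Dirichlet series $\sum_k X_k k^{-(x+it)}$ via a Carlson-type argument: realize the series as the value of an analytic function obtained from an integral transform, split it at an appropriate height $N = N(t)$, apply the moment inequality \textit{iii}) to the initial segment to get a high-moment bound, and use a Chebyshev/Borel–Cantelli argument over a suitable discretization of the $t$-axis to upgrade the $L^q$-bound to an almost sure pointwise bound uniform in $x\in[\sigma,1]$.

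First I would fix $\sigma > 1/2$ and work with the truncation $S_N(x+it) := \sum_{k\le N} X_k k^{-(x+it)}$ and tail $R_N(x+it) := \sum_{k>N} X_k k^{-(x+it)}$. By condition \textit{ii}) and the classical Dirichlet-series convergence theory (as used in Proposition \ref{kolmogorovlandau}), the tail $R_N$ is controlled a.s.\ once $N$ is taken as a function of $t$ growing like a power of $\log t$; more precisely, since the series converges at every point of $\HH_{1/2}$, partial summation against the convergent series $\sum X_k k^{-(1/2+\delta)}$ gives $R_N(x+it) \ll_\omega |t| N^{-(x-1/2-\delta)}$ a.s., which is negligible for the target bound provided $N \ge \psi(t)^{A}$ for a suitable constant $A$. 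The main work is then to bound $S_N(x+it)$. Here I apply \textit{iii}) with $\alpha_k = k^{-x}$ (absorbing the oscillating factor $k^{-it}$, which has modulus $1$, into the $X_k$ — note the inequality in \textit{iii}) is stated for real $\alpha_k$, so one should either replace $X_k$ by $X_k k^{-it}$ and check the inequality is unaffected, or split into real and imaginary parts): this yields $\{\EE |S_N(x+it)|^q\}^{1/q} \le \lambda(q)\, C\, \big(\sum_{k\le N} k^{-2x}\big)^{1/2}$. Choosing $q \asymp \log t$ makes $\lambda(q) = \lambda(\log t) =: \psi(t)$, and the factor $\big(\sum_{k\le N} k^{-2x}\big)^{1/2}$ is where the $v(\cdot)$ term and the exponent $2-2x$ come from: for $x$ near $1/2$ this sum is roughly $N^{1-2x}/(2x-1) \approx N^{1-2x} v(\cdot)^2$ with $N \approx \psi(t)^{\text{const}}$, giving $\psi(t)^{(1-2x)\cdot\text{const}}$, and combining with the $\lambda(q) = \psi(t)$ prefactor raised appropriately produces $\psi(t)^{2-2x} v(\log\psi(t))^2$.

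Then I would pass from the moment bound to the almost sure bound. Discretize $t$ over a grid $\{t_j\}$ of spacing $\asymp 1$ (or a polynomial-in-$\log$ spacing), apply Markov's inequality with the $q$-th moment and a threshold that is a large constant times $\psi(t_j)^{2-2x}v(\log\psi(t_j))^2$, and sum the resulting probabilities: because $q \asymp \log t_j$ the bound $q$-th-moment/threshold$^q$ decays like a negative power of $t_j$, so Borel–Cantelli applies. To fill in between grid points and to get uniformity in $x \in [\sigma,1]$ simultaneously, use that $S_N$ is an analytic (indeed polynomial) function of $z = x+it$ of controlled degree, so a derivative/Cauchy-estimate argument (or a union bound over an $\epsilon$-net in the compact rectangle $[\sigma,1]\times[t_j,t_{j+1}]$, exploiting Lipschitz control of $S_N$ there) transfers the grid bound to the whole strip. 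The growth condition on $\lambda$ in \textit{iii}), namely $\lambda(a+b) \le e^{\gamma a}\lambda(b)$, is exactly what guarantees that $\psi(t_{j+1})$ and $\psi(t_j)$ are comparable across the grid so the Borel–Cantelli sum does not blow up.

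I expect the \textbf{main obstacle} to be the bookkeeping that makes the bound \emph{uniform} in $x$ over the whole interval $[\sigma,1]$ rather than at a single $x$: one must choose $N=N(t)$ once and for all (not depending on $x$) and verify that both the tail estimate and the moment estimate survive uniformly, and in particular that the worst case $x \to 1/2^+$ (where $\sum_{k\le N}k^{-2x}$ is largest) is exactly what dictates the $v(\log\psi(t))^2$ factor while the exponent $2-2x$ tracks the interpolation. A secondary delicate point is checking condition \textit{iii}) is not spoiled by the unimodular twist $k^{-it}$ (the Marcinkiewicz–Zygmund / martingale / $\rho^*$-mixing inequalities that realize \textit{iii}) are typically stated for real or complex scalars, so this should go through, but it needs a line). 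Once these are pinned down, the three cases a), b), c) feed into Theorem \ref{marcin} only through the value of $\vartheta$, i.e.\ through how fast $\lambda$ grows, and the present lemma is stated abstractly enough to absorb all of them at once.
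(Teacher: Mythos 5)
There is a genuine gap, and it sits exactly where the exponent $2-2x$ is supposed to come from. You propose to apply the moment inequality \textit{iii}) directly at the abscissa $x$ to a truncation $S_N$, and you claim $\big(\sum_{k\le N}k^{-2x}\big)^{1/2}\approx N^{(1-2x)/2}v(\cdot)$, so that choosing $N\approx\psi(t)^{\mathrm{const}}$ produces a factor decaying in $x$. But for $x\ge\sigma>1/2$ the asymptotic $\sum_{k\le N}k^{-2x}\approx N^{1-2x}/(2x-1)$ is false: that sum converges and is $\asymp\zeta(2x)=O_\sigma(1)$, with no decay in $N$ or in $x$ beyond a bounded constant. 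Consequently the $L^q$ bound you get at abscissa $x$ is $\ll\lambda(q)$, and since the Markov/Borel--Cantelli bookkeeping over ranges $|t|\le e^{q}$ forces $q\asymp\log t$, the threshold cannot be taken below a constant multiple of $\lambda(\log t)=\psi(t)$. Your route therefore yields at best $F(x+it)\ll\psi(t)$ uniformly on $[\sigma,1]$, which is strictly weaker than the asserted $\psi(t)^{2-2x}v(\log\psi(t))^{2}$ once $x$ is bounded away from $1/2$ (e.g.\ at $x=1$ the lemma claims a bound $\ll v(\log\psi(t))^2$, essentially $\log\log\psi(t)$-type, far below $\psi(t)$). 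A secondary flaw: with $N\approx\psi(t)^{A}$, i.e.\ $N$ only polylogarithmic in $t$, the partial-summation tail bound $R_N(x+it)\ll_\omega |t|\,N^{-(x-1/2-\delta)}$ does not become negligible at all --- it grows like $|t|$ divided by a power of $\log t$.

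The missing idea is an interpolation (convexity) step, which is the heart of Carlson's argument that the paper follows. There the moment inequality is applied, via Fatou, to the full series at a single abscissa $\sigma'=\tfrac12+\tfrac\epsilon2$ close to the critical line (no truncation), giving $\EE|F(\sigma'+iy)|^{q}\le D^{q}\lambda(q)^{q}v(\epsilon^{-1})^{q}$; Fubini, Markov and the Cauchy integral formula over a rectangle whose horizontal sides $\tau,\tau'$ are chosen by an averaging argument then give, after Borel--Cantelli in $q$, the a.s.\ sup bound $\max_{R(t)}|F|\ll\psi(t)v(\log\psi(t))$ on $[\,\tfrac12+\tfrac1{\log\psi(t)},\tfrac43\,]\times[-t,t]$. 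The exponent $2-2x$ is then produced by the Hadamard Three-Circles Theorem: one interpolates between the trivial bound $M_1\ll v(\log\psi(t))^{2}$ on a circle reaching only to $\mathrm{Re}(z)\ge 1+1/\log\psi(t)$ (absolute convergence) and the bound $M_3\ll\psi(t)v(\log\psi(t))$ on a circle reaching to $\mathrm{Re}(z)=\sigma$, the convexity exponent $a$ being $2-2x+O(1/\log\psi(t))$ for the circle through $x+it$; this also delivers the uniformity in $x\in[\sigma,1]$ without any grid or net. To repair your write-up you would need to replace the ``truncate and bound at $x$'' step by such a convexity argument (three circles or three lines between $\mathrm{Re}(z)\approx\tfrac12$ and $\mathrm{Re}(z)\approx 1$); the grid/Lipschitz machinery and the choice of $N(t)$ then become unnecessary.
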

\begin{proof}[Proof of Lemma \ref{boundrds}] We begin the proof with the following claim:
\begin{claim}\label{ineq} Let $\{X_k\}_{k\in\NN}$ and $v(\epsilon^{-1})$ be as in Lemma \ref{boundrds}. Then there exists $D>0$ such that  for all $q>1$, $\epsilon>0$ and $t\in\RR$ the following inequality holds:
$$\EE\bigg{|}\sum_{k=1}^\infty \frac{X_k}{k^{\frac{1+\epsilon}{2}+it}}   \bigg{|}^q\leq D^q \lambda(q)^qv(\epsilon^{-1})^q.$$
\end{claim}
\begin{proof}[Proof of claim \ref{ineq}] Let $z=x+iy$ with $x=1/2+\epsilon/2$, where $\epsilon>0$. Denote $|z|=\sqrt{x^2+y^2}$. Since the random series $\sum_{k=1}^\infty\frac{X_k}{k^z}$ converges \textit{a.s.} we obtain by Fatou's Lemma that
$$\EE \bigg{|}\sum_{k=1}^\infty\frac{X_k}{k^z}\bigg{|}^q \leq \liminf_{n\to\infty} \EE \bigg{|}\sum_{k=1}^n\frac{X_k}{k^z}\bigg{|}^q.$$
Recall that $|a_1+ia_2|^q\leq 2^{\frac{q}{2}}(|a_1|^q+|a_2|^q)$ for each $a_1,a_2\in\RR$. Thus taking
$$a_1:=\sum_{k=1}^n\frac{X_k}{k^x}\cos(y\log k),\hspace{0.4cm}a_2:=-\sum_{k=1}^n\frac{X_k}{k^x}\sin(y\log k),$$
we get
$$\EE \bigg{|}\sum_{k=1}^n\frac{X_k}{k^z}\bigg{|}^q\leq 2^{\frac{q}{2}}(\EE|a_1|^q+\EE|a_2|^q).$$
Let $\alpha_k=\frac{\cos(y\log k)}{k^x}$ for all $k\geq 1$. Observe that $\alpha_k^2\leq\frac{1}{k^{1+\epsilon}}$. Hence the condition \textit{iii}) above implies that
$$ \EE |a_1|^q \leq \lambda(q)^q \EE\bigg{(}\sum_{k=1}^n |\alpha_kX_k|^2\bigg{)}^{q/2}\leq \lambda(q)^q\EE\bigg{(}\sum_{k=1}^\infty \frac{C^2\ind_{[\EE |X_k|>0]}}{k^{1+\epsilon}}\bigg{)}^{q/2}\leq C^q \lambda(q)^qv(\epsilon^{-1})^q,$$
where $C>0$ is the constant of condition \textit{i}) above. Similarly we get the same bound for $\EE |a_2|^q$. We complete the proof of the claim by choosing $D=2\sqrt{2}C$. \end{proof}
\medskip
\noindent Let $F:\HH_{1/2}\times\Omega\to\CC$ be given by $F(z):=\sum_{k=1}^\infty \frac{X_k}{k^z}$. Define
$$\Omega^*:=[\omega\in\Omega:F_\omega(z)\mbox{ converges for each } z\in\HH_{1/2}].$$
By \textit{ii)}, $\PP(\Omega^*)=1$. Hence $F$ is a random analytic function (see the proof of Proposition \ref{kolmogorovlandau}). Let $q\geq1$, $\epsilon=\epsilon(q)\in(0,1/3]$, $\sigma=1/2+\epsilon$ and $\sigma'=1/2+\epsilon/2$. Let $R_1$ and $R_2$ be the rectangles:
\begin{align*}
R_1=&R_1(q ,\epsilon)=[\sigma,4/3]\times[-e^{q-2},e^{q-2}],\\
R_2=&R_2(q ,\epsilon,\omega)=[\sigma',\sigma'+1]\times [-\tau'(\omega),\tau(\omega)],
\end{align*}
where $\tau,\tau'\in[e^{q-1},e^{q}]$ will be chosen later. Observe that $R_1\subset R_2$ and the distance from $\partial R_1$ to $\partial R_2$ equals to $\epsilon/2$. Decompose: $\partial R_2=I_1\cup I_2\cup I_3\cup I_4$, where $I_1$ and $I_3$ are the vertical lines at $Re(s)=\sigma'+1$ and $Re(s)=\sigma'$ respectively and $I_2$ and $I_4$ are the horizontal lines at $Im(s)=\tau$ and $Im(s)=-\tau'$ respectively. For $q\in\NN$ and $\omega\in\Omega^*$, define:
\begin{equation}\label{vj}
V_j=V_j(\omega,q)=\int_{I_j}|F_\omega(s)|^q |ds|,\;j=1,2,3,4.\\
\end{equation}

For all $q\in\NN$, $F_\omega^q$ is analytic on $\HH_{1/2}$. Hence, by the Cauchy integral formula, for each $z\in R_1$,
$$F_\omega^q(z)=\frac{1}{2\pi i}\int_{\partial R_2}\frac{F_\omega(s)^q}{s-z}ds=\frac{1}{2\pi i}\sum_{j=1}^4\int_{I_j} \frac{F_\omega(s)^q}{s-z}ds,$$
where for each $j$, the line integral over $I_j$ above is oriented counterclockwise. For fixed $z\in R_1$ and $j=1,2,3,4$:
$$\bigg{|}\frac{1}{2\pi i}\sum_{j=1}^4\int_{I_j} \frac{F_\omega(s)^q}{s-z}ds \bigg{|}\leq \frac{1}{2\pi i}\sum_{j=1}^4\int_{I_j} \frac{|F_\omega(s)^q|}{|s-z|}|ds|\leq \frac{1}{\pi \epsilon }\sum_{j=1}^4V_j(\omega,q).$$
(see \cite{conway}, p. 65). Hence:
\begin{equation}\label{aux60}
\max_{z\in R_1}|F_\omega(z)|^q \leq \frac{1}{\pi \epsilon}\sum_{j=1}^4 V_j(q,\omega).
\end{equation}
Next we will follow the same line of reasoning of \cite{carlson} to proof the following claim:
\begin{claim}\label{claimA} Let $\epsilon=\epsilon(q):=\min\{1/3,(\log \lambda(q))^{-1}\}$. Then there exists $H_1=H_1(\lambda,v,C)$ such that
\begin{equation}\label{aux00}
\PP\big{(}\max_{z\in R_1}|F(z)|>H_1v(\epsilon^{-1})\lambda(q)\big{)}\leq \frac{1}{2^{q-1}}.
\end{equation}
\end{claim}
\noindent \textit{Proof of the Claim \ref{claimA}.}  By claim \ref{ineq}, for each $q\in\NN$, for all $y\in\RR$
\begin{equation}\label{desigualdade}
\EE|F(\sigma'+iy)|^q \leq D^q \lambda(q)^q v(\epsilon^{-1})^q.
\end{equation}
By condition \textit{i}) above
\begin{equation}\label{auxiliar4}
|F(\sigma'+1+iy)|^q\leq \bigg{(}\sum_{k=1}^\infty \frac{|X_k|}{k^{\frac{3}{2}+\frac{\epsilon}{2}}} \bigg{)}^q\leq C^q\bigg{(} \sum_{k=1}^\infty \frac{\ind_{\EE [|X_k|>0]}}{k^{3/2}}\bigg{)}^q= C^q v (2)^{2q}.
\end{equation}
Let $H=4e\max\{D,C v (2)^{2}\}$ and $A_q$ and $B_q$ the events
$$A_q:=\Omega^*\cap\bigg{[}\int_{-e^q}^{e^q}|F(\sigma'+iy)|^q dy\geq
(H \lambda(q) v(\epsilon^{-1}))^{q}  \bigg{]},$$
$$B_q:=\Omega^*\cap\bigg{[}\int_{\sigma'}^{\sigma'+1}\int_{-e^q}^{e^q}|F(x+iy)|^q dxdy\geq (H \lambda(q) v(\epsilon^{-1}))^{q}        \bigg{]}.$$
By Fubini's Theorem and $(\ref{desigualdade})$:
$$\EE\int_{-e^q}^{e^q} |F(\sigma'+iy)|^q dy=
\int_{-e^q}^{e^q} \EE|F(\sigma'+iy)|^q dy
\leq 2e^qD^q \lambda(q)^q v(\epsilon^{-1})^q,$$
$$\EE\int_{\sigma'}^{\sigma'+1}\int_{-e^q}^{e^q} |F(x+iy)|^q dxdy=\int_{\sigma'}^{\sigma'+1}\bigg{(}\EE\int_{-e^q}^{e^q} |F(x+iy)|^q dy
\bigg{)}dx\leq 2e^qD^q \lambda(q)^q v(\epsilon^{-1})^q.$$
Thus we obtain by Markov's inequality that $\PP(A_q)\leq \frac{1}{2^q}$ and $\PP(B_q)\leq \frac{1}{2^q}$ and hence that $\PP(E_q)\geq 1-\frac{1}{2^{q-1}}$ where $E_q=\Omega^*\cap(A_q \cup B_q)^c$. Let $\omega$ be a fixed element of $E_q$. Then
\begin{equation}\label{auxiliar 1}
\int_{-e^q}^{e^q} |F_\omega(\sigma'+iy)|^q dy<(H \lambda(q) v(\epsilon^{-1}))^{q},
\end{equation}
\begin{equation}\label{auxiliar 2}
\int_{\sigma'}^{\sigma'+1}\int_{-e^q}^{e^q} |F_\omega(x+iy)|^q dxdy<(H \lambda(q) v(\epsilon^{-1}))^{q}.
\end{equation}
\noindent \textit{The choice of $\tau(\omega)$ and $\tau'(\omega)$}. For each $\omega\in E_q$ we can choose $\tau'(\omega)$ and $\tau(\omega)$ in $[e^{q-1},e^q]$ such that
\begin{equation}\label{auxiliar 3}
V_j(\omega,q)\leq (H \lambda(q) v(\epsilon^{-1}))^{q},\;j=1,2,3,4.
\end{equation}
holds for each $\omega\in E_q$. To show the existence of such $\tau$ and $\tau'$, let
$$u(y)=u_\omega(y)=\int_{\sigma'}^{\sigma'+1}|F_\omega(x+iy)|^q dx\quad(\omega\in E_q).$$
Let $L=(H \lambda(q) v(\epsilon^{-1}))^{q}$, $a=e^{q-1}$ and $b=e^q$. By $(\ref{auxiliar 2})$ and Fubini's Theorem:
$$\int_{a}^{b}u(y)dy\leq \int_{-e^q}^{e^q} u(y)dy \leq L.$$
Observe that $b-a>1$. Denote by $m$ the Lebesgue mesure on $\RR$. We claim that:
$$m(\{y\in[a,b]:u(y)\leq L\})>0.$$
Indeed,
$$m(\{y\in[a,b]: u(y)> L\})+m(\{y\in[a,b]: u(y)\leq L\})=b-a,$$
and since $L>0$ and $u\geq 0$ we get:
$$L m(\{y\in[a,b]: u(y)> L\})\leq\int_{a}^b u(y)\ind_{[u> L]}dm(y)\int_{-e^q}^{e^q} u(y)dy \leq L.$$
Hence $m(\{y\in[a,b]: u(y)\geq L\})\leq 1 <b-a$. This shows that the set $\{y\in[a,b]:u(y)\leq L\}$ is not empty and hence the existence of at least one $\tau(\omega)\in[e^{q-1},e^q]$ such that $(\ref{auxiliar 3})$ is satisfied for $j=4$. A similar argument shows the existence of $\tau'\in[e^{q-1},e^q]$ such that $(\ref{auxiliar 3})$ is satisfied for $j=2$. Since $\tau,\tau'\leq e^q$, (\ref{auxiliar4}) and (\ref{auxiliar 1}) gives the desired inequality for $V_1(\omega,q)$ and $V_3(\omega,q)$.

\noindent By condition \textit{iii)} above, $\lambda(q)\leq \lambda(0)e^{\gamma q}$. Since $\epsilon^{-1}(q)=\max\{3,\log \lambda(q)\}$, we obtain that
$$\theta=\sup_{q\geq 1}\epsilon^{-\frac{1}{q}}(q)<\infty.$$ Let $H_1=4\theta H$. By $(\ref{aux60})$ and $(\ref{auxiliar 3})$ we obtain for each $\omega\in E_q$:
$$\max_{z\in R_1}|F_\omega(z)|\leq \bigg{(}\frac{1}{\epsilon(q)} \sum_{j=1}^4 V_j\bigg{)}^{1/q} \leq \bigg{(}\frac{4(H\lambda(q)v(\epsilon^{-1}))^q}{\epsilon(q)}\bigg{)}^{1/q}\leq H_1 \lambda(q)v(\epsilon^{-1}),$$
completing the proof of the Claim \ref{claimA}.

\begin{claim}\label{claimB} Let $\sigma=\frac{1}{2}+\frac{1}{\log \psi (t)}$. Denote $R=R(t):=[\sigma,\frac{4}{3}]\times[-t,t]$.  Then for almost all $\omega\in\Omega$ there exists a real number $t_0=t_0(\omega)$ such that for all $t\geq t_0$:
\begin{equation}\label{ultimato}
\max_{z\in R(t)}|F_\omega(z)|\leq H_2 \psi(t)v(\log \psi(t)),
\end{equation}
where $H_2=H_2(\lambda,v,C)$.
\end{claim}
\noindent \textit{Proof of the Claim \ref{claimB}.} Claim \ref{claimA} implies that
$$\sum_{q=1}^\infty\PP\big{(}\max_{z\in R_1(q)}|F_\omega(z)|\geq H_1 \lambda(q)v(\epsilon^{-1}) \big{)}\leq\sum_{q=0}^\infty \frac{1}{2^q}=2.$$
The Borel-Cantelli Lemma gives a set $\Omega'$ of $\PP(\Omega')=1$ such that for each $\omega\in\Omega'$, there exists $q_0(\omega)\in\NN$, such that for the following inequality holds for all integers $q\geq q_0$:
\begin{equation}\label{aux11}
\max_{z\in R_1(q)}|F_\omega(z)|\leq H_1 \lambda(q)v(\epsilon^{-1}(q)).
\end{equation}
For $x\geq 0$ denote $[x]$ the integer part of $x$. Let $t_0(\omega) = e^{q_0+10}$. For each $t\geq t_0$ let $q(t)=3+[\log t]$. Since
$\log t\leq [\log t]+1\leq q-2$, we get that $t\leq e^{q-2}$ and
$$\log\psi(t)=\log\lambda(\log t)\leq\log\lambda(q-2)\leq \log\lambda(q)=\epsilon^{-1}(q).$$
Hence $R(t)\subset R_1(q)$. By (\ref{aux11}),
$$\max_{z\in R(t)}|F_\omega(z)|\leq \max_{z\in R_1(q(t))}|F_\omega(z)|\leq H_1 \lambda(3+[\log t])v(\log \lambda(3+[\log t])).$$
Observe that $[\log t]\leq \log t$ and $\lambda(3+[\log t])\leq e^{3\gamma}\lambda (\log t)=e^{3\gamma}\psi (t)$. Also,
$v(\log \lambda(3+[\log t]))\leq v(3\gamma +\log \psi(t))$. Let $a(t)=\frac{1}{3\gamma+\log\psi(t)}$ and $b(t)=\frac{1}{\log \psi(t)}$. Then $\lim_{t\to\infty}\frac{b(t)}{a(t)}=1$  and hence
$$b(t)-a(t)\leq 3\gamma a(t)b(t)\ll a^2(t).$$
By Lemma \ref{polo}, $v(3\gamma +\log \psi(t))=v(\log\psi(t))+O(1)$. Hence there exists a constant $D_1=D_1(v)$ such that for all large $t$, $v(\log \lambda(3+[\log t]))\leq D_1v(\log \psi(t) )$. We complete the proof of the claim by choosing $H_2=e^{3\gamma}D_1H_1$.

\noindent \textit{End of the Proof of Lemma \ref{boundrds}.} Let $\sigma(t)=\frac{1}{2}+\frac{1}{\log\psi(t)}$, $1/2<x\leq 1$ and $\Omega'$ be as in clam \ref{claimB}. In the sequel, $\omega\in\Omega'$ is fixed and $t_1=t_1(\omega)$ is a large number such that $(\ref{ultimato})$ holds for all $t\geq t_1$ and $\sigma(t)<x$. Since $|F_\omega(x-it)|=|F_\omega(x+it)|$, it is sufficient to prove Lemma \ref{boundrds} for $t>t_1(\omega)$. Let $\beta=\beta(t)=\log \psi(t)$ and $C_1,C_2,C_3$ be concentric circles with center $\beta+it$ and passing trough the points: $\sigma+\frac{1}{2}+it$, $x+it$ and $\sigma+it$ respectively. Thus, the respective radius of $C_1,C_2,C_3$ are:
\begin{align*}
r_1=&\beta-\sigma-\frac{1}{2},\\
r_2=&\beta-x,\\
r_3=&\beta-\sigma.
\end{align*}
Denote $M_j=M_j(t,\omega)=\max_{z\in C_j}|F_\omega(z)|$, $j=1,2,3$. Since $F_\omega$ is analytic in $\HH_{1/2}$, the Hadamard Three-Circles Theorem states that
\begin{equation}\label{aux1}
M_2\leq M_1^{1-a}M_3^{a},
\end{equation}
where $a=\frac{\log(\frac{r_2}{r_1})}{\log(\frac{r_3}{r_1})}$.
In the sequel we will estimate $M_1$, $M_3$ and $a$ separately.

\noindent \textit{Estimative for $M_1$}. Since for all $k\in\NN$, $|X_k|\leq C$,
\begin{equation}\label{aux2}
M_1\leq \sum_{k=1}^\infty\frac{|X_k|}{k^{1+\beta^{-1}(t)}}\leq C v(\log\psi(t))^2.
\end{equation}

\noindent \textit{Estimative for $M_3$}. By condition \textit{iii}), the function $\lambda$ satisfies $\lambda(c+d)\leq e^{\gamma c}\lambda(d)$ for all $c,d\geq 0$.
In particular, $\psi(t)=\lambda(\log t)\leq \lambda(0)t^\gamma$. Hence $\beta(t)=\log\psi(t)\leq \log (\lambda(0)t^\gamma)=\log\lambda(0)+\gamma\log t$. This gives $\log(t+\beta(t))=\log(1+\beta(t)/t) + \log(t)$ and hence:
$$\psi(t+\beta(t))=\lambda(\log(t+\beta(t)))=\lambda(\;\log(1+\beta(t)/t)\;+\;\log t\;)\leq (1+\beta(t)/t)\lambda(\log t).$$
In particular, we obtain that $\psi(t+\beta(t))\ll \psi(t)$. Also we get that
$$v(\log \psi(t+\beta(t))\leq v(\,\log(1+\beta(t)/t)\;+\;\beta(t) \,).$$
Since $\beta(t)/t=o(1)$, $\log(1+\beta(t)/t)\sim \beta(t)/t$. By Lemma \ref{polo}, we obtain that
$v(\,\log(1+\beta(t)/t)\;+\;\beta(t) \,)=v(\beta(t))+O(1)$. These estimates combined with $(\ref{ultimato})$ gives:
\begin{equation}\label{ultimato2}
M_3\ll \max_{z\in R(t+\beta)} |F_\omega(z)|\leq H_2 \psi(t+\beta(t))v(\log\psi(t+\beta(t)))\ll \psi(t)v(\log\psi(t)).
\end{equation}

\noindent\textit{Estimative for $a(t)$.} We claim that $a(t)=2-2x+O(\beta^{-1}(t))$. Denote $\tau=\beta^{-1}$. Observe
that
$$\frac{r_2}{r_1}=1+\tau \frac{1-x+\tau}{1-\tau(\sigma-1/2)},\quad \frac{r_3}{r_1}=1+ \frac{\tau/2}{1-\tau(\sigma-1/2)}.$$
Using that for $\varphi$ small, $\log(1+\varphi)=\varphi+O(\varphi^2)$ and that $\frac{1}{\tau+O(\tau^2)}=\frac{1}{\tau}+O(1)$, we obtain:
\begin{align*}
a=&\bigg{(}\tau \frac{1-x+\tau}{1-\tau(\sigma-1/2)}+O(\tau^2)\bigg{)}\bigg{(} \frac{2(1-\tau(\sigma-1/2))}{\tau}+O(1) \bigg{)}\\
=&2(1-x)+2\tau+O(\tau)+O(\tau)+O(\tau^2)\\
=&2-2x+O(\tau).
\end{align*}

\noindent \textit{Estimative for $M_1^{1-a}$.} Let $\tau=\frac{1}{\log\psi(t)}$. First observe that $v(\epsilon^{-1})^2 \leq \zeta(1+\epsilon)\sim \epsilon^{-1}.$
Hence
$$v(\log\psi(t))^{O(\tau)}= \exp(\;\;\log (v(\log\psi(t)))\cdot O(\tau)\;)=\exp(\;O(\log \log \psi(t))\cdot O(\tau)\;)=O(1).$$
Recalling $(\ref{aux2})$, we obtain $M_1^{1-a}\ll v(\log\psi(t))^{4x-2}$.

\noindent\textit{Estimative for $M_3^{a}$.} Since $\psi(t)^{O(\tau)}=O(1)$ and $v(\log\psi(t))^{O(\tau)}=O(1)$, we obtain
$$M_3^a\ll\psi(t)^{2-2x}v(\log(\psi(t)))^{2-2x}.$$

\noindent \textit{Estimative for $F_\omega(x+it)$.} Observe that $F_\omega(x+it)\leq M_2(t,\omega)$. Collecting the estimates above, by $(\ref{aux1})$ we get
$$M_2\ll \psi(t)^{2-2x}v(\log\psi(t))^{2x},$$
completing the proof. \end{proof}
\begin{proof}[Proof of Theorem \ref{marcin}] Assume that $\{X_k\}_{k\in\NN}$ satisfies condition \textit{i}) above. If this random variables are independent then by Proposition \ref{kolmogorovlandau} it also satisfies condition \textit{ii}). The condition \textit{iii}) with $\lambda(q)=C\sqrt{q+1}$ ($q\geq 2$) for some constant $C>0$ is the Marcinkiewicz-Zygmund inequality for independent random variables (see \cite{chow} p. 366). Hence $\psi(t)\ll\sqrt{\log t}$.

\noindent If $\{X_k\}_{k\in\NN}$ is a martingale difference that satisfies \textit{i}) above, then, $X_1=M_1-M_0$ and $X_k=M_k-M_{k-1}$ where $(M_n,\mathcal{F}_n)_{n\geq0}$ is a martingale with bounded increments. Hence for any sequence of real numbers $\{a_k\}_{k\in\NN}$, $S_n:=\sum_{k=1}^na_kX_k$ also is a martingale with same filtration $\{\mathcal{F}_n\}_{n\in\NN}$. The condition \textit{iii}) with $\lambda(q)=C_1(q+1)$ ($q\geq 2$) for some $C_1>0$ is the Burkh\"older inequality applied for $S_n$ (see \cite{shiryaev} p. 499). Hence $\psi(t)\ll\log t$. Let $S_n(\epsilon):=\sum_{k=1}^n\frac{X_k}{k^{\frac{1+\epsilon}{2}}}$. For $q=2$,  the Burkh\"older inequality applied for $S_n(\epsilon)$ gives that $\EE|S_n(\epsilon)|^2\leq D\lambda(2) \zeta(1+\epsilon)$ and hence that $\sup_{n\in\NN} \EE |S_n(\epsilon)|^2<\infty$. By Doob's martingale convergence Theorem (see \cite{shiryaev} p. 510) we obtain the almost sure convergence of $S_n(\epsilon)$ and hence the almost sure convergence of the Dirichlet series $\sum_{k=1}^\infty \frac{X_k}{k^ {\frac{1+\epsilon}{2}} }$ for each $\epsilon>0$. The referred properties for the convergence of Dirichlet series stated in the proof of Proposition \ref{kolmogorovlandau} gives that $\{X_k\}_{k\in\NN}$ satisfy Theorem \ref{boundrds} condition \textit{ii}).

\noindent Given a probability space $(\Omega,\mathcal{F},\PP)$ let $\mathcal{F}_1$ and $\mathcal{F}_2$ be sub-sigma algebras of $\mathcal{F}$. For $j=1,2$, denote $\\ L^2(\mathcal{F}_j)=\{X:\Omega\to\RR:\EE |X|^2<\infty\mbox{ and } X\mbox{ is } \mathcal{F}_j-\mbox{ measurable}\}$ and $\|X\|_2=\sqrt{\EE X^2}$. Let
$$\rho(\mathcal{F}_1,\mathcal{F}_2)=\sup\{\VV f_1f_2/(\|f_1\|_2\|f_2\|_2):f_1\in L^2(\mathcal{F}_1)\mbox{ and }f_2\in L^2(\mathcal{F}_2)\}.$$
Let $\{X_k\}_{k\in\NN}$ be a sequence of random variables and for $S\subset\NN$, let $\mathcal{F}_S$ be the sigma algebra generated by the random variables $\{X_k\}_{k\in S}$. Define
$$\rho^*(n)=\sup\{\rho(\mathcal{F}_S,\mathcal{F}_T):S,T\subset\NN\mbox{ and }\min_{s\in S,t\in T}|s-t|\geq n\}.$$
One says that the sequence $\{X_k\}_{k\in\NN}$ is $\rho^*$-mixing if $\lim_{n\to\infty} \rho^*(n)=0$ (see \cite{bradleystrong} p. 114). In particular, if
$\{X_k\}_{k\in\NN}$ is $\rho^*$-mixing, then there exists $n\in\NN$ such that $\rho^*(n)<1$. In \cite{bryc} it has been proved a result which implies the following: If $\{X_k\}_{k\in\NN}$ are centered and uniformly bounded random variables with $\rho^*(n)<1$ for some large $n$, then condition \textit{ii}) is satisfied. Moreover, in \cite{bryc} (Lemma 1 and 2 and Remark 4), the condition \textit{iii}) for $q\geq 2$ is satisfied with $\lambda(q)=\frac{1}{1-\rho^{\frac{2}{q}}}\sqrt{q+1}\sim C_2(q+1)^{3/2}$, where $\rho=\rho^*(n)$. Hence $\psi(t)\ll (\log t )^{3/2}$.

\noindent Let $\zeta$ be the Riemann zeta function. We recall that $\zeta(1+\epsilon)\sim \epsilon^{-1}$. Hence $v(\epsilon^{-1})^2 \ll \epsilon^{-1}$. On the other hand, if $X_k=0$ for all non prime $k$, then $v(\epsilon^{-1})^2=\log(\epsilon^{-1})$. Hence for large $t$ $v(\log\psi(t))^2\ll \log\log t$, and if $X_k=0$ for all non prime $k$, $v(\log\psi(t))^2\ll \log\log\log t$.
\end{proof}

\section{Proofs of the main results}\label{provas}
\subsection{(Theorem \ref{alphaa})} Let $R_1\subset R_2$ be open connected sets of $\CC$. An analytic function $h:R_1\to\CC$ has analytic extension to $R_2$ if there exists an analytic function $\bar{h}:R_2\to\CC$ such that $\bar{h}(z)=h(z)$ for all $z\in R_1$. We say that a random analytic function $h:R_1\times\Omega\to\CC$ has analytic extension to $R_2$ if the set of elements $\omega\in\Omega$ for which $h_\omega$ has analytic extension to $R_2$ contains a set $\Omega^*\in\mathcal{F}$ such that $\PP(\Omega^*)=1$.

\begin{proof}[Proof of Theorem \ref{alphaa}] Let $\alpha\in(0,1/2)$ and $f_\alpha$ be the random multiplicative function such that $\EE(f(p))=-\frac{1}{p^\alpha}$ for each prime $p$. Denote $F_\alpha(z):=\sum_{k=1}^\infty\frac{f_\alpha(k)}{k^z}$.\\
\begin{claim}\label{equiv1} The half plane $\HH_{1/2+\alpha}$ is a zero free region for $\zeta$ if and only if $F_\alpha$ has analytic extension to $\HH_{1/2}$.
\end{claim}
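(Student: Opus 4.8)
The plan is to reduce the analytic continuation of the random Dirichlet series $F_\alpha$ to that of its expectation $\EE F_\alpha$, and then to identify $\EE F_\alpha$ with a shift of $1/\zeta$.

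First I would apply Lemma \ref{lema0} to $f=f_\alpha$: part (i) gives $\EE F_\alpha(z)\neq 0$ on $\HH_1$, and part (ii) supplies a non-vanishing random analytic function $\theta\colon\HH_{1/2}\times\Omega\to\CC$ with $F_\alpha(z)=\EE F_\alpha(z)\,\theta(z)$ for all $z\in\HH_1$. Fix $\Omega^\ast\in\FF$ with $\PP(\Omega^\ast)=1$ on which $\theta_\omega$ is analytic and non-vanishing on all of $\HH_{1/2}$. I claim $F_\alpha$ has analytic extension to $\HH_{1/2}$ if and only if $\EE F_\alpha$ does. If $\EE F_\alpha$ extends to an analytic $G$ on $\HH_{1/2}$, then for every $\omega\in\Omega^\ast$ the function $G\,\theta_\omega$ is analytic on $\HH_{1/2}$ and agrees with $F_{\alpha,\omega}$ on $\HH_1$, so it is the desired extension, and this holds on the probability-one set $\Omega^\ast$. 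Conversely, fix a single $\omega\in\Omega^\ast$; if $F_{\alpha,\omega}$ has analytic extension $\widetilde{F}$ to $\HH_{1/2}$, then since $\theta_\omega$ is non-vanishing and analytic there, the quotient $\widetilde{F}/\theta_\omega$ is analytic on $\HH_{1/2}$ and equals $\EE F_\alpha$ on $\HH_1$, giving the extension of $\EE F_\alpha$. (That $\theta$ is non-vanishing is exactly what makes the division legitimate; this is the one point where that part of Lemma \ref{lema0} is used.)

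Next I would compute $\EE F_\alpha$ explicitly. Since $f_\alpha$ is multiplicative, supported on the squarefree integers, with $\{f_\alpha(p)\}_{p\in\mathcal{P}}$ independent and $\EE f_\alpha(p)=-p^{-\alpha}$, one gets $\EE f_\alpha(n)=\mu(n)\,n^{-\alpha}$ for every $n\in\NN$; hence for $\Re(z)>1$,
$$\EE F_\alpha(z)=\sum_{n=1}^\infty\frac{\mu(n)}{n^{z+\alpha}}=\frac{1}{\zeta(z+\alpha)}.$$
Since $\zeta$ continues meromorphically to $\CC$ with its only pole a simple one at $1$, the right-hand side continues meromorphically to $\CC$, and its poles are exactly the points $z$ with $\zeta(z+\alpha)=0$ — note $z=1-\alpha$ is a removable singularity, where $1/\zeta$ vanishes. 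Consequently $1/\zeta(\,\cdot\,+\alpha)$ is analytic on $\HH_{1/2}$ if and only if $\zeta$ has no zero in $\HH_{1/2+\alpha}$, i.e. if and only if $\HH_{1/2+\alpha}$ is a zero-free region for $\zeta$. Combining with the previous paragraph completes the proof.

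The only points requiring care are routine: the meromorphic continuation and the simple pole of $\zeta$ at $1$ (so that $1/\zeta$ really is analytic at $s=1$ and does not obstruct the extension), and keeping the almost-sure quantifiers straight in the equivalence between the extensions of $F_\alpha$ and of $\EE F_\alpha$. I do not expect a substantial obstacle: the content of the claim is the factorization $F_\alpha=\theta/\zeta(\,\cdot\,+\alpha)$ provided by Lemma \ref{lema0} together with the identity $\sum_{n\geq1}\mu(n)n^{-s}=1/\zeta(s)$.
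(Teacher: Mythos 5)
Your proposal is correct and follows essentially the same route as the paper: both factor $F_\alpha(z)=\theta(z)\,\EE F_\alpha(z)$ via Lemma \ref{lema0} with $\theta$ and $1/\theta$ random analytic (hence non-vanishing) on $\HH_{1/2}$, identify $\EE F_\alpha(z)=1/\zeta(z+\alpha)$, and observe that the simple pole of $\zeta$ at $1$ makes $1/\zeta(z+\alpha)$ analytic on $\HH_{1/2}$ exactly when $\zeta$ is zero-free on $\HH_{1/2+\alpha}$. No substantive differences.
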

\noindent \textit{Proof of the claim}. Since $\{f(p)\}_{p\in\mathcal{P}}$ is a sequence of independent random variables, $\EE f(k)=\frac{\mu(k)}{k^{\alpha}}$. For $z\in\HH_1$ we obtain that $\EE F_\alpha (z)=\sum_{k=1}^\infty \frac{\mu(k)}{k^{z+\alpha}}=\frac{1}{\zeta(z+\alpha)}$. By Lemma \ref{lema0} ii) there exists a random analytic function $\theta:\HH_{1/2}\times\Omega\to\CC$ such that $F_\alpha(z)=\theta(z)\frac{1}{\zeta(z+\alpha)}$. By  iii) of this Lemma,  $\frac{1}{\theta}:\HH_{1/2}\times\Omega\to\CC$ also is a random analytic function.  Hence $\frac{1}{\zeta(z+\alpha)}$ is analytic in $\HH_{1/2}$ if and only if $F_\alpha$ has analytic extension to $\HH_{1/2}$. Since $\zeta$ is analytic in $\CC\setminus\{1\}$ with a simple pole in $z=1$, we obtain that $\zeta$ has no zeros in the half plane $\HH_{1/2+\alpha}$ if and only if $\frac{1}{\zeta(z+\alpha)}$ is analytic in $\HH_{1/2}$, completing the proof of the claim.

\noindent Assume $M_{f_\alpha}(x)=o(x^{1/2+\epsilon})$ for all $\epsilon>0$ \textit{a.s.} By partial summation (Lemma \ref{abell}), the series $F_\alpha:\HH_{1/2}\times\Omega\to\CC$ is a random analytic function. By claim \ref{equiv1} we conclude that $\zeta$ has no zeros in the half plane $\HH_{1/2+\alpha}$. Thus if for each $\alpha>0$, $M_{f_\alpha}(x)=o(x^{1/2+\epsilon})$ $\forall\epsilon>0$ \textit{a.s.}, then $\zeta$ has no zeros in $\HH_{1/2}$.

Assume RH. In \cite{little} J.E.Littlewwod proved, for fixed $\sigma>1/2$, that RH implies that $\frac{1}{\zeta(\sigma+it)}=o(t^{\delta})$ for all $\delta>0$. By Theorem \ref{marcin}, for fixed $1/2<\sigma\leq1$ we have
$$\theta(\sigma+it)\ll \exp(\log(t)^{1-\sigma}\log\log\log t)=o(t^\delta),\;\forall \delta>0,\;a.s. $$
By claim \ref{equiv1} $F_\alpha$ has analytic extension to $\HH_{1/2}$ given by $\frac{\theta(z)}{\zeta(z+\alpha)}$. Hence $F_\alpha(\sigma+it)\ll t^{\delta}$ for all $\delta>0$ \textit{a.s.} We recall the following result from the theory of the Dirichlet series:  Assume that $G(z)=\sum_{n=1}^\infty\frac{g(n)}{n^z}$ converges absolutely $\forall z\in\HH_1$, and that for some $c<1$, $G$ has analytic extension to $\HH_c$ given by $\bar{G}$. If $\bar{G}(\sigma+it)=o(t^\delta)$ for all $\delta>0$, then $M_g(x)=o(x^{c+\epsilon})$ for all $\epsilon>0$ (see \cite{tenenbaumlivro} page 134, Theorem 4). This result applied for $F_\alpha$ completes the proof. \end{proof}
\subsection{(Theorems  \ref{viz1/2} and \ref{viz1/22})}
\begin{proof}[Proof of Theorem \ref{viz1/2} ] Let $\EE f(p)=-\delta_p$ where $0<\delta_p\leq1$. Since $|f(p)|\leq 1$ $\forall p\in\mathcal{P}$, by the Kolmogorov two series Theorem, $f$ weakly biased implies that $\sum_{p\in\mathcal{P}}\frac{\delta_p}{p}$ converges. On the other hand, for $\alpha\in(0,1/2)$, the convergence of $\sum_{p\in\mathcal{P}}\frac{\delta_p}{p^{1-\alpha}}$ implies that $\sum_{p\in\mathcal{P}}\frac{f(p)}{p^{1-\alpha}}$ converges \textit{a.s.} Thus, we only need to prove that, if $f$ is weakly biased and
\begin{equation}\label{star}
\PP(M_f(x)=o(x^{1-\epsilon})\mbox{ for some }\epsilon>0)=1,
\end{equation}
then there exists $\alpha>0$ such that the series $\sum_{p\in\mathcal{P}}\frac{\delta_p}{p^{1-\alpha}}$ converges.

\noindent Let $F:\HH_1\times\Omega\to\CC$, $v:\HH_{1/2}\times\Omega\to\CC$ and $u:\HH_1\to\CC$ be given by:
\begin{align*}
&F(z)=\sum_{n=1}^\infty\frac{f(n)}{n^z},\\
&v(z)=\sum_{p\in\mathcal{P}}\frac{f(p)-\EE f(p)}{p^z},\\
&u(z)=\sum_{p\in\mathcal{P}}\frac{\delta_p}{p^z}.
\end{align*}
By Proposition \ref{kolmogorovlandau}, $v$ is a random analytic function and $u$ is analytic. By Lemma \ref{lema0} and claim \ref{cl1} there exists a random analytic function $w:\HH_{1/2}\times\Omega\to\CC$ such that for each $z\in\HH_1$
\begin{equation}\label{bran}
F(z)=\exp(v(z)+w(z)-u(z)).
\end{equation}
Since the series $\sum_{p\in\mathcal{P}}\frac{\delta_p}{p}$ converges, we obtain that $\lim_{x\to1^+}u(x)=\sum_{p\in\mathcal{P}}\frac{\delta_p}{p}<\infty$. This combined with $(\ref{bran})$ implies that $\lim_{x\to1+}F(x)>0$, \textit{a.s.} By $(\ref{star})$ there is a set $\Omega^*$ with $\PP(\Omega^*)=1$ such that for each $\omega\in\Omega^*$ there exists $\epsilon=\epsilon(\omega)>0$ for which $M_{f_\omega}(x)=o(x^{1-\epsilon})$. Hence, if $\omega$ is a fixed element of $\Omega^*$, Lemma \ref{abell} implies that the series $F_\omega(z)=\sum_{n=1}^\infty\frac{f_\omega(n)}{n^z}$ converges for each $z\in\HH_{1-\epsilon}$ and it is an analytic function in this half plane. Thus for $\PP-$almost all $\omega\in\Omega$ we obtain an $\epsilon=\epsilon(\omega)>0$ such that $F_\omega(z)$ is analytic in $\HH_{1-\epsilon}$ and satisfies $F_{\omega}(1)\neq 0$. In particular for each of these $\omega$, there exists an open ball $B=B(\omega)\subset \HH_{1-\epsilon}$ with positive radius and centered at $z=1$ such that $F_{\omega}(z)\neq 0$ for all $z\in\HH_{1}\cup B$. Since this random subset of $\CC$ is a simply connected region, $F_\omega$ has a branch of the logarithm $r_\omega:\HH_{1}\cup B\to\CC$ (see \cite{conway}, p. 94-95, Corollary 6.17), i.e., $r_\omega$ is analytic and satisfies $F_\omega(z)=\exp(r_{\omega}(z))$ for all $z\in\HH_1\cup B$. This combined with
$(\ref{bran})$ gives for $\PP-$almost all $\omega$ and all $z\in\HH_1$
\begin{equation}\label{bran2}
\Lambda(z):=\exp(u(z))=\exp(v_\omega(z)+w_\omega(z)-r_\omega(z)).
\end{equation}
In particular $\lambda_\omega(z):=v_\omega(z)+w_\omega(z)-r_\omega(z)$ is analytic in $\HH_1\cup B$ and hence it is, \textit{a.s.}, a branch of the logarithm for the analytic function $\Lambda:\HH_1\to\CC$. A classical result from complex analysis states that there exists an integer $k=k(\omega)$ such that for all $z\in\HH_1$ and almost all $\omega$, $u(z)-\lambda_\omega(z)=2k\pi i$. That is, $\bar{u}_\omega:{\HH_1}\cup B\to\CC$ given by $\bar{u}_\omega(z)=\lambda_\omega(z)+2k\pi i$ extends $u$ analytically to $\HH_1\cup B$. Since for $z\in\HH_1$, $\bar{u}_\omega(z)=u(z)=\sum_{p\in\mathcal{P}}\frac{\delta_p}{p^z}$ is a Dirichlet series of non-negative terms that it is analytic in an open disk centered at $z=1$, a classical result concerning Dirichlet series of this type (see \cite{apostol} p. 237, Theorem 11.13) implies that there is $\alpha>0$ for which the series $\sum_{p\in\mathcal{P}}\frac{\delta_p}{p^{1-\alpha}}$ converges.  \end{proof}
\begin{proof}[Proof of Theorem \ref{viz1/22}] We begin the proof with the following claim:
\begin{claim}\label{claim8} Let $0<\alpha<1/2$. Assume that $\EE X_p=-\frac{\delta_p}{p^\alpha}$ where $0\leq \delta_p\leq 1$, $\limsup\delta_p=\delta<1$, and $\sum_{p\in\mathcal{P}}\frac{\delta_p}{p}=\infty$. Then $M_f(x)$ is not $o(x^{1-\alpha-\epsilon})$ for any $\epsilon>0$, \textit{a.s.}
\end{claim}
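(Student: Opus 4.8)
The plan is to argue by contradiction and show that $\PP\big(M_f(x)=o(x^{1-\alpha-\epsilon})\text{ for some }\epsilon>0\big)=0$. Write $\delta_p:=-p^{\alpha}\,\EE f(p)\in[0,1]$ and set $u(z):=\sum_{p\in\mathcal P}\frac{\delta_p}{p^{z+\alpha}}$, a Dirichlet series with non-negative coefficients that converges on $\HH_{1-\alpha}$. Exactly as in the proof of Theorem \ref{viz1/2} (via Lemma \ref{lema0} and Claim \ref{cl1}) one has the decomposition
\[
F(z)=\exp\big(v(z)+w(z)-u(z)\big),\qquad z\in\HH_{1},
\]
where $v(z)=\sum_{p}\frac{f(p)-\EE f(p)}{p^{z}}$ and $w$ are random analytic functions on $\HH_{1/2}$: for $v$ this is Proposition \ref{kolmogorovlandau}, since $\sum_p\frac{\EE(f(p)-\EE f(p))}{p^{z}}\equiv 0$ and $\sum_p\frac{\VV f(p)}{p^{2\sigma}}<\infty$ for $\sigma>1/2$. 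Fix $\omega$ in the full-measure set on which $v_\omega$ and $w_\omega$ are analytic on $\HH_{1/2}$, and suppose $M_{f_\omega}(x)=o(x^{1-\alpha-\epsilon})$ for some $\epsilon=\epsilon(\omega)>0$. By partial summation (Lemma \ref{abell}), $F_\omega$ is then analytic on $\HH_{1-\alpha-\epsilon}\supset\HH_{1-\alpha}$; since $F_\omega$ and $\exp(v_\omega+w_\omega-u)$ are both analytic on $\HH_{1-\alpha}$ and agree on $\HH_1$, the identity theorem gives $F_\omega(\sigma)=\exp\big(v_\omega(\sigma)+w_\omega(\sigma)\big)\exp(-u(\sigma))>0$ for all real $\sigma>1-\alpha$, and (using $\alpha<1/2$, so $1-\alpha>1/2$) $v_\omega$ and $w_\omega$ are continuous at $1-\alpha$.

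From here I extract two incompatible estimates for $F_\omega(\sigma)$ as $\sigma\downarrow 1-\alpha$. First, $\sum_p\frac{\delta_p}{p}=\infty$ together with monotone convergence gives $u(\sigma)\to+\infty$; since $v_\omega(\sigma)+w_\omega(\sigma)$ stays bounded near $1-\alpha$, the decomposition forces $F_\omega(\sigma)\to 0$, hence $F_\omega(1-\alpha)=0$ by continuity, and therefore $F_\omega(\sigma)=O\big(\sigma-(1-\alpha)\big)$ by analyticity at $1-\alpha$. Second, choose $\delta'$ with $\limsup_p\delta_p<\delta'<1$ and $p_0$ with $\delta_p\le\delta'$ for $p\ge p_0$; bounding the finitely many small primes by $O(1)$ and using the Mertens-type estimate $\sum_p p^{-s}=\log\tfrac1{s-1}+O(1)$ as $s\downarrow 1$ (with $s=\sigma+\alpha$, so $s-1=\sigma-(1-\alpha)$), one gets $u(\sigma)\le\delta'\log\tfrac1{\sigma-(1-\alpha)}+O(1)$, hence $\exp(-u(\sigma))\gg\big(\sigma-(1-\alpha)\big)^{\delta'}$ and so $F_\omega(\sigma)\gg_\omega\big(\sigma-(1-\alpha)\big)^{\delta'}$ for $\sigma$ near $1-\alpha$. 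Combining the two bounds yields $\big(\sigma-(1-\alpha)\big)^{1-\delta'}\gg_\omega 1$ for all $\sigma$ slightly larger than $1-\alpha$, which is absurd since $1-\delta'>0$. This contradiction shows that, outside a null set, no such $\omega$ exists, proving the claim.

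The only genuinely delicate point, and the place where the hypothesis $\limsup_p\delta_p<1$ is used in an essential way, is precisely that $\delta'$ can be taken strictly below $1$: the whole argument rests on the incompatibility between the exponent $\delta'<1$ from the Mertens bound and the exponent $1$ forced by the vanishing of the analytic function $F_\omega$ at $1-\alpha$. If $\delta_p\to 1$ were allowed, the bound would only give $u(\sigma)\le\log\tfrac1{\sigma-(1-\alpha)}+o\big(\log\tfrac1{\sigma-(1-\alpha)}\big)$, consistent with a simple zero of $F_\omega$ at $1-\alpha$, and the argument would collapse. The remaining ingredients — the persistence of the Euler-product decomposition down to $\HH_{1-\alpha}$ by analytic continuation, and the classical estimate for $\sum_p p^{-s}$ near $s=1$ — are routine.
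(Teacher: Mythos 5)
Your proof is correct and follows essentially the same route as the paper's: both use the factorization from Lemma \ref{lema0}/Claim \ref{cl1} together with Lemma \ref{abell} to force an analytic zero of $F_\omega$ (equivalently of $\EE F$) at $z=1-\alpha$ from $\sum_p\delta_p/p=\infty$, and then contradict that zero using $\limsup_p\delta_p<1$. The only cosmetic differences are that you quantify the obstruction via the Mertens estimate $\sum_p p^{-s}=\log\frac{1}{s-1}+O(1)$, yielding the lower bound $F_\omega(\sigma)\gg(\sigma-(1-\alpha))^{\delta'}$ with $\delta'<1$, where the paper multiplies by $\zeta(1+\epsilon)$ and lets the simple pole play the same role, and that your pointwise contradiction gives probability zero directly without the appeal to the zero--one law (Corollary \ref{caldal}).
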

\begin{proof}[Proof of the claim.] Let $0< \alpha<1/2$. By Lemma \ref{lema0}, there is a random analytic function $\theta:\HH_{1/2}\times\Omega\to\CC$ such that for all $z\in\HH_1$ and all $\omega\in\Omega$
$$F(z)=\theta(z)\EE F(z),$$
where $F(z)=\sum_{k=1}^\infty\frac{f(k)}{k^z}$. Moreover, since $\EE f(p)=-\delta_p/p^\alpha$, claim \ref{cl1} gives
\begin{equation}\label{ptheo1.31}
\EE F(z)=\exp\bigg{(}-\sum_{p\in\mathcal{P}}\frac{\delta_p}{p^{z+\alpha}}\bigg{)}\exp(A(z))\quad (z\in\HH_1),
\end{equation}
where $A:\HH_{1/2}\to\CC$ is analytic. Since the series $\sum_{p\in\mathcal{P}}\frac{\delta_p}{p^{z+\alpha}}$ converges absolutely for $z\in\HH_{1-\alpha}$ we obtain that the function in the right side of $(\ref{ptheo1.31})$ is analytic in $\HH_{1-\alpha}$ and hence extends analytically $\EE F(z)$ to this half plane. Let $z\in\HH_1$ and $\zeta(z)=\sum_{k=1}^\infty\frac{1}{k^z}$ be the Riemann zeta function. A direct application of claim \ref{cl1} gives that
\begin{equation}\label{ptheo1.32}
\zeta(z):=\exp\bigg{(}\sum_{p\in\mathcal{P}}\frac{1}{p^z}\bigg{)}\exp(B(z)),
\end{equation}
where $B:\HH_{1/2}\to\CC$ is analytic. By combining $(\ref{ptheo1.31})$ and $(\ref{ptheo1.32})$:
\begin{equation}\label{ptheo1.33}
\zeta(1+\epsilon)\EE F(1-\alpha+\epsilon)=\exp\bigg{(}\sum_{p\in\mathcal{P}}\frac{1-\delta_p}{p^{1+\epsilon}}\bigg{)}\exp(A(1-\alpha+\epsilon)+B(1+\epsilon)).
\end{equation}
Since $\limsup \delta_p=\delta<1$ there exists $\eta>0$ such that $1-\delta_p\geq \eta$ for all $p$ sufficiently large. Hence
\begin{equation*}
\sum_{p\in\mathcal{P}}\frac{1-\delta_p}{p}=\infty.
\end{equation*}
This combined with $(\ref{ptheo1.33})$ implies
\begin{equation}\label{ptheo1.34}
\lim_{\epsilon\to 0^+}\zeta(1+\epsilon)\EE F(1-\alpha+\epsilon)=\infty.
\end{equation}
 On the other hand hypothesis $\sum_{p\in\mathcal{P}}\frac{\delta_p}{p}=\infty$ combined with $(\ref{ptheo1.31})$ gives that
\begin{equation}\label{ptheo1.35}
\lim_{\epsilon\to0}\EE F(1-\alpha+\epsilon)=0.
\end{equation}
 Hence if we assume
$$\PP(M_f(x)=o(x^{1-\alpha-\epsilon}\mbox{ for some }\epsilon>0)=1,$$
by Lemma \ref{abell} we obtain for almost all $\omega\in\Omega$ an $\epsilon=\epsilon(\omega)>0$ such that $F_\omega(z)=\sum_{k=1}^\infty \frac{f_\omega(k)}{k^z}$ is analytic in $\HH_{1-\alpha-\epsilon}$. Since $\EE F(z)=\frac{F_\omega(z)}{\theta_\omega(z)}$,
$\EE F(z)$ is analytic in a open neighborhood of $z=1-\alpha$. By $(\ref{ptheo1.35})$, $\EE F(1-\alpha)=0$ while $(\ref{ptheo1.34})$
gives that this can not be an zero of an analytic function, since the Riemann zeta function has a simple pole at $z=1$. This gives a contradiction which implies that $\EE F(z)$ is not analytic in $z=1-\alpha$, and hence that
$$\PP(M_f(x)=o(x^{1-\alpha-\epsilon}\mbox{ for some }\epsilon>0)<1.$$
A direct application of Corollary \ref{caldal} implies that this probability is zero.  \end{proof}
\begin{remark}[Uniform coupling]\label{uniform coupling}
Let $(\Omega,\mathcal{F},\PP)$ be the probability space where $\Omega$ is the set of the sequences $\omega=(\omega_p)_{p\in\mathcal{P}}$ such that $\omega_p\in[0,1]$ for each prime $p$, $\mathcal{F}$ is the Borel sigma-algebra of $\Omega$ and $\PP$ is the Lebesgue product measure in $\mathcal{F}$. For a random multiplicative function $f:\NN\times\Omega\to\{-1,0,1\}$, let $f(p):\Omega\to\{-1,1\}$ be the random variable given by
\begin{equation}\label{indicator}
f_\omega(p)=\,\ind_{(a_p,1]}(\omega_p)-\ind_{[0,a_p]}(\omega_p),
\end{equation}
where $a_p:=\PP(f(p)=-1)$. If $f(p)$ and $g(p)$ are random variables given by $(\ref{indicator})$ then
$$|\PP(f(p)\neq g(p))|=|\PP(f(p)=-1)-\PP(g(p)=-1)|.$$
\end{remark}
\begin{definition}\label{uniform def} Let $f$ and $g$ be random multiplicative functions. We say that $f$ and $g$ are uniformly coupled if they are defined in $(\Omega,\mathcal{F},\PP)$ as in Remark \ref{uniform coupling} and $\forall p\in\mathcal{P}$, $f(p)$ and $g(p)$ are given by $(\ref{indicator})$.
\end{definition}

Let $\alpha>0$ and assume that $\{\delta_p\}_{p\in\mathcal{P}}$ is such that $0\leq \delta_p\leq 1$ $\forall p\in\mathcal{P}$. Let $f$ be a random multiplicative function such that for each prime $p$, $\{f(p)\}_{p\in\mathcal{P}}$ is given by (\ref{indicator}) with $a_p=\frac{1}{2}+\frac{\delta_p}{2p^\alpha}$ and hence $\EE f(p)=-\frac{\delta_p}{p^\alpha}$.
Let $u,h:\NN\times\Omega\to\{-1,0,1\}$ be random functions that satisfy the multiplicative property (\ref{multiplicative}), and such that for each prime $p$, $u(p)=Z_p$ and $h(p)=W_p$ where,
\begin{align*}
&Z_p(\omega):=-\ind_{[0,\frac{1}{2}-\frac{\delta_p}{2p^\alpha})}(\omega_p)+\ind_{(\frac{1}{2} + \frac{\delta_p}{2p^\alpha},1]}(\omega_p),\\
&W_p(\omega):=-\ind_{[\frac{1}{2}-\frac{\delta_p}{2p^\alpha},\frac{1}{2}+\frac{\delta_p}{2p^\alpha}]}(\omega_p).
\end{align*}
\begin{claim}\label{series} Let $\gamma=\max\{1/2,1-\alpha\}.$ Then $\sum_{n=1}^\infty\frac{u(n)}{n^{1/2+\epsilon}}$ and $\sum_{n=1}^\infty\frac{|h(n)|}{n^{\gamma+\epsilon}}$ converges $\forall\epsilon>0$ \textit{a.s.}
\end{claim}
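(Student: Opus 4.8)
The plan is to analyze the two Dirichlet series separately via the Kolmogorov two-series theorem (as in Proposition \ref{kolmogorovlandau}), checking convergence of the relevant expectation and variance series. First I would observe that, by the multiplicative property (\ref{multiplicative}), it suffices to control the behaviour on primes together with an Euler-product/Claim \ref{cl1}-type argument; more precisely, arithmetic-function Dirichlet series of the form $\sum_n \frac{a(n)}{n^z}$ with $a$ multiplicative, supported on squarefrees, and $|a(p)|\le 1$ converge in $\HH_\sigma$ as soon as $\sum_p \frac{a(p)}{p^\sigma}$ (and its variance analogue) converge, so the whole problem reduces to the prime terms. For the series $\sum_n \frac{u(n)}{n^{1/2+\epsilon}}$: from the definition of $Z_p$ one has $\EE u(p) = \PP(\omega_p > \tfrac12+\tfrac{\delta_p}{2p^\alpha}) - \PP(\omega_p < \tfrac12 - \tfrac{\delta_p}{2p^\alpha}) = 0$, so the random variables $u(p)$ are \emph{centered}; moreover $|u(p)|\le 1$, so $\VV u(p)\le 1$ and $\sum_p \frac{\VV u(p)}{p^{1+2\epsilon}} \le \sum_p \frac{1}{p^{1+2\epsilon}} < \infty$ for every $\epsilon>0$. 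Hence by the Kolmogorov two-series theorem the prime series $\sum_p \frac{u(p)}{p^{1/2+\epsilon}}$ converges a.s., and by the reduction above and Proposition \ref{kolmogorovlandau} so does $\sum_n \frac{u(n)}{n^{1/2+\epsilon}}$, for every $\epsilon>0$ a.s.

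For the second series I would work with $|h(n)|$, which is a \emph{deterministic-in-sign}, nonnegative multiplicative function: since $h(p) = W_p \in \{-1,0\}$, we have $|h(p)| = \ind_{[\frac12 - \frac{\delta_p}{2p^\alpha},\,\frac12 + \frac{\delta_p}{2p^\alpha}]}(\omega_p)$, a Bernoulli random variable with $\EE |h(p)| = \PP(|h(p)|=1) = \frac{\delta_p}{p^\alpha}$ and $\VV |h(p)| \le \EE|h(p)| = \frac{\delta_p}{p^\alpha} \le \frac{1}{p^\alpha}$. Thus, with $\gamma = \max\{1/2, 1-\alpha\}$, for each $\epsilon>0$ one has $\gamma + \epsilon > 1-\alpha$, so $\sum_p \frac{\EE|h(p)|}{p^{\gamma+\epsilon}} \le \sum_p \frac{1}{p^{\alpha + \gamma + \epsilon}} \le \sum_p \frac{1}{p^{1+\epsilon}} < \infty$, and likewise $\gamma+\epsilon > 1/2$ gives $2(\gamma+\epsilon) > 1$ so $\sum_p \frac{\VV|h(p)|}{p^{2(\gamma+\epsilon)}} \le \sum_p \frac{1}{p^{2(\gamma+\epsilon)}} < \infty$. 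Applying the Kolmogorov two-series theorem again yields a.s. convergence of $\sum_p \frac{|h(p)|}{p^{\gamma+\epsilon}}$, and the multiplicative reduction promotes this to a.s. convergence of $\sum_n \frac{|h(n)|}{n^{\gamma+\epsilon}}$. A mild point to be careful with: one should phrase the Kolmogorov application over a countable sequence of $\epsilon$'s tending to $0$ (e.g. $\epsilon = 1/j$) and intersect the resulting full-measure events, then use the standard Dirichlet-series fact (cited in the proof of Proposition \ref{kolmogorovlandau}) that convergence at one point forces convergence on the whole half-plane to the right, to conclude "for all $\epsilon>0$" on a single almost sure event.

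The only genuinely nonroutine step is justifying that a.s. convergence of the \emph{prime} Dirichlet series $\sum_p \frac{a(p)}{p^z}$ in a half-plane $\HH_\sigma$ forces a.s. convergence of the full series $\sum_n \frac{a(n)}{n^z}$ there, when $a$ is multiplicative and supported on squarefrees with $|a(p)|\le 1$; but this is precisely the content of Claim \ref{cl1} combined with Lemma \ref{lema0} (the ratio $\theta(z)=\exp(\sum_p \frac{a(p)-\EE a(p)}{p^z})\exp(A(z))$ is a random analytic function on $\HH_{1/2}$ whenever the prime series converges there), so it can be invoked rather than re-proved. Everything else is a direct variance/expectation bookkeeping exercise with the explicit indicator representations of $Z_p$ and $W_p$.
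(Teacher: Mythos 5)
Your treatment of the second series is essentially the paper's: Kolmogorov two-series applied to the independent prime terms $|h(p)|$ with the bounds $\EE|h(p)|=\delta_p/p^{\alpha}$ and $\VV|h(p)|\leq 1$, followed by the reduction from primes to all $n$. For that reduction the correct citation is the classical fact that for a \emph{nonnegative} multiplicative function $\phi$ supported on squarefrees with $0\leq\phi\leq 1$, the series $\sum_n \phi(n)n^{-\sigma}$ converges if and only if $\sum_p \phi(p)p^{-\sigma}$ does (\cite{tenenbaumlivro}, p.~106, Theorem 2) --- this works because everything is absolutely convergent --- and not Claim \ref{cl1} or Lemma \ref{lema0}, which only establish the Euler-product/exponential identity on $\HH_1$.

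The treatment of the first series has a genuine gap. You apply the Kolmogorov two-series theorem and then assert that convergence of the prime series $\sum_p u(p)p^{-(1/2+\epsilon)}$ forces convergence of the full series $\sum_n u(n)n^{-(1/2+\epsilon)}$. Neither step is justified: the random variables $\{u(n)\}_{n\in\NN}$ are \emph{not} independent for composite $n$ (each is a product of the independent $u(p)$'s), so Kolmogorov does not apply to the full series; and the reduction to primes is false for signed multiplicative functions in the strip of conditional convergence --- the identity of Claim \ref{cl1} is proved only on $\HH_1$, where both sides converge absolutely, and convergence of the Euler product in $1/2<Re(z)\leq 1$ does not imply convergence of the Dirichlet series there (this is exactly the nontrivial content of Wintner's theorem). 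The paper closes this gap differently: it observes that $\EE u(p)=0$, so for distinct squarefree $k,l$ there is a prime dividing exactly one of them and hence $\EE\, u(k)u(l)=0$; the family $\{u(n)\}$ is therefore orthogonal and uniformly bounded, and since $\sum_n \log^2(n+1)\,n^{-(1+2\epsilon)}<\infty$, the Rademacher--Menshov theorem gives almost sure convergence of $\sum_n u(n)n^{-(1/2+\epsilon)}$ directly. You need this (or an equivalent second-moment/maximal-inequality argument on the full sequence) rather than a prime-only computation.
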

\begin{proof}[Proof of the claim] The Rademacher-Menshov Theorem \cite{meaney} states that if $\{X_n\}_{n\in\NN}$ is a sequence of orthogonal random variables such that
the series $\sum_{n=1}^\infty \log^2(n+1) \EE X_n^2 $ converges and $\EE X_n=0$ for all $n$, then the random series $\sum_{n=1}^\infty X_n$ converges \textit{a.s.} If $k$ and $l$ are distinct squarefree integers, there are at least one prime $p$ such that either $p|k$ or $p|l$ while $p$ do not divide $\gcd(k,l)$, and hence $\EE u(k)u(l)=0$. Since $|u(n)|\leq 1$ $\forall n\in\NN$, by the Rademacher-Menshov Theorem, $\sum_{n=1}^\infty\frac{u(n)}{n^{1/2+\epsilon}}$ converges $\forall \epsilon>0$ \textit{a.s.}

\noindent By the Kolmogorov two series Theorem, $\sum_{p\in\mathcal{P}}\frac{|h(p)|}{p^{\gamma+\epsilon}}$ converges $\forall \epsilon>0$ \textit{a.s.} since
\begin{align*}
\sum_{p\in\mathcal{P}}\frac{\EE |h(p)|}{p^{\gamma+\epsilon}}\leq& \sum_{p\in\mathcal{P}}\frac{1}{p^{\gamma+\alpha+\epsilon}}<\infty,\\
\sum_{p\in\mathcal{P}}\frac{\VV |h(p)|}{p^{2\gamma+2\epsilon}}\leq&\sum_{p\in\mathcal{P}}\frac{1}{p^{1+2\epsilon}}<\infty.
\end{align*}
We recall a classical result for a Dirichlet series of an multiplicative function $\phi:\NN\to[-1,1]$ which states that: If for each $p\in\mathcal{P}$, $\phi(p^m)=0$ for all $m\geq2$, then for each $\sigma>0$ the series $\sum_{n=1}^\infty\frac{|\phi(n)|}{n^\sigma}$ converges if and only if the series $\sum_{p\in\mathcal{P}}\frac{|\phi(p)|}{p^{\sigma}}$ converges (see \cite{tenenbaumlivro}, p. 106 Theorem 2). A direct application of this result for $h$ completes the proof of the claim. \end{proof}

\noindent The Dirichlet convolution between $u$ and $h$, denoted by $u\ast h$ is given by: $(u\ast h)(n):=\sum_{d|n}u(d)h(n/d)$. Since for each prime $p$, $f(p)=u(p)+h(p)$ and $u(p) \cdot h(p)=0$, we obtain that
\begin{align*}
u\ast h(p)=&u(p)+h(p)=f(p),\\
u\ast h(p^n)=&\sum_{k=0}^nu(p^k)h(p^{n-k})=0\quad(n\geq 2).
\end{align*}
This implies that for each prime $p$, $f(p^m)=u\ast h(p^m)$ $\forall m\in\NN$. Since the convolution between two multiplicative functions results in a multiplicative function, we conclude that $f=u\ast h$. A result for Dirichlet series states that if $\sum_{k=1}^\infty \frac{h(k)}{k^\sigma}$ converges absolutely and if $\sum_{k=1}^\infty \frac{u(k)}{k^\sigma}$ converges then $\sum_{k=1}^\infty \frac{(u\ast h)(k)}{k^\sigma}$ also converges (see \cite{tenenbaumlivro}, p. 122, Notes 1.1). This combined with claim \ref{series} implies that $\sum_{k=1}^\infty\frac{f(k)}{k^{\gamma+\epsilon}}$ converges \textit{a.s.} A direct application of Kronecker's Lemma gives that $M_f(x)=o(x^{\gamma+\epsilon})$ for all $\epsilon>0$ \textit{a.s.}, completing the proof of Theorem \ref{viz1/22} \end{proof}
\subsection{(Theorems \ref{alphaa2} and \ref{alphaa1})}
\begin{remark}\label{interinter} Let $0=a_1<a_2<...<a_{n+1}=1$ and $I_k=[a_k,a_{k+1})$ if $1\leq k<n$ and $I_n=[a_n,a_{n+1}]$. Let $\psi:[0,1]\to[0,1]$ be a bijection such that for all $1\leq k\leq n$, $\psi:I_k\to [0,1]$ is a translation. Then $\psi$ is called interval exchange transformation (see \cite{viana}). If $m$ denotes the Lebesgue measure on $[0,1]$, then for each Borelian $B\subset[0,1]$, $m(\psi^{-1}(B))=m(B)$, i.e., an interval exchange preserves the Lebesgue measure. If $(\Omega,\mathcal{F},\PP)$ is the probability space introduced in the remark \ref{uniform coupling} and if  $\psi_p:[0,1]\to[0,1]$ is an interval exchange transformation for all $p\in\mathcal{P}$ then $T:\Omega\to\Omega$ given by
$$T (\omega_2, \omega_3, \omega_5,...)=(\psi_2(\omega_2), \psi_3(\omega_3), \psi_5(\omega_5),...)$$
preserves $\PP$, i.e., for each $B\in\mathcal{F}$, $\PP(T^{-1}(B))=\PP(B)$.
\end{remark}
We say that a random multiplicative function $g$ supported on the squarefree integers is biased towards $|\mu|$ if $\EE g(p)>0$ $\forall p\in\mathcal{P}$. In the sequel, we will use the advantage of the probability space (uniform coupling) introduced in the remark \ref{uniform coupling} where it is defined the measure preserving transformation $T$ introduced in the remark \ref{interinter}. This will enable us to transport some properties of a biased $g$ towards $|\mu|$ to a strongly biased random multiplicative function towards $\mu$.
\begin{lemma}\label{left} Let $g$ be a random multiplicative function biased to $|\mu|$. Let $z\in\HH_1$ and $G(z):=\sum_{k=1}^\infty \frac{g(k)}{k^z}$. If for some $0<\alpha<1/2$ there exists a random analytic function $\bar{G}:\HH_{1-\alpha}\times\Omega\to\CC$ such that for all $z\in\HH_1$, $\bar{G}(z)=G(z)$, then $\sum_{p\in\mathcal{P}}\frac{g(p)}{p^{1-\alpha+\epsilon}}$ converges $\forall\epsilon>0$ \textit{a.s.}
\end{lemma}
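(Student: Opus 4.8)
The plan is to descend from the \emph{random} Dirichlet series $G$ to the \emph{deterministic} series $\EE G(z)=\sum_{k=1}^{\infty}\frac{\EE g(k)}{k^{z}}$, whose coefficients are non-negative, and then invoke Landau's theorem on the singularity of such a series at its abscissa of convergence.

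\textbf{Step 1: transfer the analytic extension to $\EE G$.} Since $g$ is $\pm1$-valued it is a random multiplicative function in the sense of the paper, so Lemma \ref{lema0} applies: there is a non-vanishing random analytic function $\theta_{g}:\HH_{1/2}\times\Omega\to\CC$ with $\theta_{g}(z)=\frac{G(z)}{\EE G(z)}$ on $\HH_{1}$, and $\frac{1}{\theta_{g}}$ is again a random analytic function on $\HH_{1/2}$. As $1-\alpha>1/2$, the quotient $\frac{\bar G}{\theta_{g}}$ is a random analytic function on $\HH_{1-\alpha}$, and on $\HH_{1}$ it equals the deterministic function $\EE G$. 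Hence for $\PP$-almost every $\omega$ the map $z\mapsto \frac{\bar G_{\omega}(z)}{(\theta_{g})_{\omega}(z)}$ is analytic on the connected set $\HH_{1-\alpha}$ and coincides with $\EE G$ on $\HH_{1}$; by the identity theorem it is the unique analytic continuation of $\EE G$, so this continuation exists and is the same deterministic function for every such $\omega$. Denote it again by $\EE G$.

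\textbf{Step 2: apply Landau.} Because $g$ is biased towards $|\mu|$, by independence of $\{g(p)\}_{p\in\mathcal{P}}$ one has $\EE g(k)=|\mu(k)|\prod_{p\mid k}\EE g(p)\ge 0$ for every $k$, while $\sum_{k}\frac{|\EE g(k)|}{k^{\sigma}}\le\zeta(\sigma)<\infty$ for $\sigma>1$; thus $\EE G$ is a Dirichlet series with non-negative coefficients, absolutely convergent on $\HH_{1}$. Since it extends analytically to the open half plane $\HH_{1-\alpha}$, Landau's theorem (\cite{apostol}, p. 237, Theorem 11.13) forces its abscissa of convergence to be at most $1-\alpha$. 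Consequently, for every $\epsilon>0$ the series $\sum_{k=1}^{\infty}\frac{\EE g(k)}{k^{1-\alpha+\epsilon}}$ converges, and since the primes form a subfamily of the squarefree integers with $\EE g(p)\ge 0$, we obtain $\sum_{p\in\mathcal{P}}\frac{\EE g(p)}{p^{1-\alpha+\epsilon}}<\infty$ for every $\epsilon>0$.

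\textbf{Step 3: Kolmogorov two series theorem.} Fix $\epsilon>0$. The series $\sum_{p}\frac{\EE g(p)}{p^{1-\alpha+\epsilon}}$ converges by Step 2, and $\VV g(p)\le 1$ together with $2(1-\alpha+\epsilon)>1$ (here $\alpha<1/2$ is used) gives $\sum_{p}\frac{\VV g(p)}{p^{2(1-\alpha+\epsilon)}}<\infty$; the Kolmogorov two series theorem then yields that $\sum_{p\in\mathcal{P}}\frac{g(p)}{p^{1-\alpha+\epsilon}}$ converges a.s. Intersecting these almost sure events over a sequence $\epsilon_{n}\downarrow 0$ gives convergence for all $\epsilon>0$ a.s., which is the claim.

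The crux is Step 1: passing from a random analytic extension of $G$ to a genuine deterministic analytic continuation of $\EE G$. This is exactly where the non-vanishing of $\theta_{g}$ from Lemma \ref{lema0}(ii)--(iii) is essential, as it lets one solve $\EE G=\frac{\bar G}{\theta_{g}}$ on all of $\HH_{1-\alpha}$. Steps 2 and 3 are then routine: Landau's classical theorem on non-negative Dirichlet series, plus a one-line second moment estimate.
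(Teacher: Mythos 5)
Your proof is correct and follows essentially the same route as the paper's: divide out the non-vanishing random analytic function $\theta$ from Lemma \ref{lema0} to transfer the analytic extension of $G$ to the non-negative deterministic Dirichlet series $\EE G$, invoke Landau's theorem to get convergence of $\EE G$ in $\HH_{1-\alpha}$, and finish with the Kolmogorov two-series theorem. The only (harmless) cosmetic difference is that you pass from $\sum_k \EE g(k)k^{-\sigma}$ to the prime subseries directly by positivity of the terms, whereas the paper cites Tenenbaum's Euler-product criterion for multiplicative functions.
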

\begin{proof} Since the random variables $\{g(p)\}_{p\in\mathcal{P}}$ are independent, $k\in\NN\mapsto \EE g(k)$ is multiplicative, supported on the square free integers and non-negative. In particular, $\EE G(z)$ is a Dirichlet series of non-negative terms. By Lemma \ref{lema0}, for all $z\in\HH_1$ there exists a non-vanishing random analytic function $\theta:\HH_{1/2}\times\Omega \to\CC$ such that for all $z\in\HH_1$, $G(z)=\EE G(z)\theta(z)$. In particular $\frac{1}{\theta}$ also is a random analytic function. Hence $\Lambda:\HH_{1-\alpha}\times\Omega\to\CC$ given by
$$\Lambda(z)=\frac{\bar{G}(z)}{\theta(z)}$$
is a random analytic function and satisfies $\Lambda(z)=\EE G(z)$ for all $z\in\HH_1$. In particular there exists $\omega\in\Omega$ such that
$\Lambda_\omega:\HH_{1-\alpha}\to\CC$ is analytic and  $\Lambda_\omega(z)=\EE G(z)$ for all $z\in\HH_1$. We recall that if a Dirichlet series of non-negative terms has analytic extension to the half plane $\HH_{1-\alpha}$, then actually this series converges for all $z$ in this half plane. Hence
$\sum_{k=1}^\infty \frac{\EE g(k)}{k^z}$ converges for every $z\in\HH_{1-\alpha}$. Since $k\in\NN\mapsto\EE g(k)$ is multiplicative and non-negative,
the series $\sum_{p\in\mathcal{P}}\frac{\EE g(p)}{p^z}$ converges for all $z\in\HH_{1-\alpha}$ (see \cite{tenenbaumlivro} p. 106, Theorem 2 and remark (a)). By Proposition \ref{kolmogorovlandau} we obtain that $\sum_{p\in\mathcal{P}}\frac{g(p)}{p^{1-\alpha+\epsilon}}$ converges $\forall\epsilon>0$ \textit{a.s.} \end{proof}

\begin{proof}[Proof of Theorem \ref{alphaa1}]  Let $u$ and $g$ be random multiplicatives function such that $u(p)$ and $g(p)$ are given by $(\ref{indicator})$ with
\begin{align*}
&\PP(u(p)=-1)=1/2,\\
&\PP(g(p)=-1)=1/2-\PP(f(p)=1),
\end{align*}
respectively. Hence $u$ is unbiased and
$$\EE g(p) = 2\PP(f(p)=1)= 1+\EE f(p)>0.$$
\noindent Denote $F(z):=\sum_{k=1}^\infty \frac{f(k)}{k^z}$,
$U(z)=\sum_{k=1}^\infty \frac{u(k)}{k^z}$ and $G(z)=\sum_{k=1}^\infty \frac{g(k)}{k^z}$. Let $\varphi:\HH_{1}\times\Omega\to\CC$ and $\psi:\HH_1\times\Omega\to\CC$ be the random analytic functions
$$\varphi(z)=\zeta(z)F(z)\mbox{ and }\psi(z)=U^{-1}(z)G(z),$$
where $U^{-1}(z)=\frac{1}{U(z)}$. For a random analytic function $\lambda:\HH_1\times\Omega\to\CC$ denote
$$A_\lambda:=\{\omega\in\Omega: \lambda_\omega \mbox{ has analytic extension to }\HH_{1-\alpha}\}.$$
By Proposition $(\ref{mensu})$, if $\lambda_\omega:\HH_1\to\CC$ is analytic for all $\omega\in\Omega$, then $A_\lambda$ is measurable. In particular the events $A_F$, $A_G$, $A_U$, $A_{U^{-1}}$, $A_\varphi$, and $ A_\psi$ are measurable.
\begin{claim}\label{inter} $\PP(A_\psi)=\PP(A_G)$ and under the hypothesis of Theorem \ref{alphaa1}, $\PP(A_\varphi)=1$.
\end{claim}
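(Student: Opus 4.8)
The plan is to prove the two assertions separately, both by manipulating analytic extensions along vertical strips and using the fact, established in Lemma \ref{lema0}, that the ratio $\theta = F/\EE F$ extends to a non-vanishing random analytic function on $\HH_{1/2}$.

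First I would establish $\PP(A_\psi)=\PP(A_G)$. Recall $\psi(z)=U^{-1}(z)G(z)$. Since $u$ is unbiased, by Wintner's Theorem (quoted in the introduction) $M_u(x)=o(x^{1/2+\epsilon})$ for all $\epsilon>0$ \textit{a.s.}, so by Lemma \ref{abell} the series $U(z)=\sum_k u(k)/k^z$ converges and is analytic on $\HH_{1/2}$ \textit{a.s.} Moreover $\EE U(z)=1$ on $\HH_1$ (since $\EE u(k)=\ind_{k=1}$), so by Lemma \ref{lema0} ii) $U(z)=\theta_U(z)$ for a non-vanishing random analytic $\theta_U$ on $\HH_{1/2}$; hence $U^{-1}=1/\theta_U$ is itself a non-vanishing random analytic function on $\HH_{1/2}$, in particular on $\HH_{1-\alpha}$. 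Therefore, for $\PP$-a.e.\ $\omega$, $G_\omega$ has analytic extension to $\HH_{1-\alpha}$ if and only if $\psi_\omega = U_\omega^{-1}G_\omega$ does: one direction multiplies by the everywhere-analytic $U_\omega^{-1}$, the other multiplies by the everywhere-analytic $U_\omega = 1/\theta_{U,\omega}$. Intersecting with the full-measure set on which $U^{-1}$ is analytic and non-vanishing on $\HH_{1-\alpha}$ gives $A_\psi = A_G$ up to a null set, hence $\PP(A_\psi)=\PP(A_G)$.

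Next I would show $\PP(A_\varphi)=1$ under the hypothesis of Theorem \ref{alphaa1}, namely $M_f(x)=o(x^{1-\alpha})$ \textit{a.s.}, where $f$ is strongly biased. By Lemma \ref{abell}, $M_f(x)=o(x^{1-\alpha})$ \textit{a.s.}\ forces $F(z)=\sum_k f(k)/k^z$ to converge and be analytic on $\HH_{1-\alpha}$ \textit{a.s.} Since $\zeta$ is analytic on $\CC\setminus\{1\}$ with only a simple pole at $z=1$, and $1-\alpha<1$, the product $\varphi(z)=\zeta(z)F(z)$ is analytic on $\HH_{1-\alpha}\setminus\{1\}$ \textit{a.s.}; the point I must handle is the potential pole of $\zeta$ at $z=1$. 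Here the strong bias enters: for strongly biased $f$ one has $F(1)=0$ \textit{a.s.}\ (equivalently $\sum_p f(p)/p=-\infty$ \textit{a.s.}, which via the Euler-type factorization $F(z)=\theta(z)/\zeta_f$-type expression from Lemma \ref{lema0} and Claim \ref{cl1} makes $F$ vanish at $z=1$), so the simple zero of $F$ at $z=1$ cancels the simple pole of $\zeta$, and $\varphi$ extends analytically across $z=1$. More precisely, writing $F(z)=\theta(z)\exp\big(\sum_p \EE f(p)/p^z + \cdots\big)$ with $\EE f(p)=-\delta_p/p^{\alpha'}$-type coefficients summing to $-\infty$ at $z=1$, the factor $\exp(\sum_p \EE f(p)/p^z)$ vanishes to first order at $z=1$ against the first-order pole of $\zeta$, leaving a bounded analytic germ. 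Thus $\varphi_\omega$ extends analytically to all of $\HH_{1-\alpha}$ for $\PP$-a.e.\ $\omega$, i.e.\ $\PP(A_\varphi)=1$.

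The main obstacle I anticipate is the cancellation at $z=1$: one must argue carefully that for strongly biased $f$ the Dirichlet series $F$ genuinely has a zero at $z=1$ that is \emph{simple}, so that $\zeta(z)F(z)$ is not merely meromorphic but analytic there. This is where the factorization $F = \theta \cdot \EE F$ of Lemma \ref{lema0}, together with the representation $\EE F(z)=\exp\big(-\sum_p \delta_p p^{-z-\alpha}+A(z)\big)$ from Claim \ref{cl1} (as used in the proof of Theorem \ref{viz1/22}), is indispensable: the divergence $\sum_p \delta_p/p=\infty$ characterising strong bias forces $\EE F(1)=0$, and comparison with $\zeta(z)=\exp(\sum_p p^{-z}+B(z))$ shows the order of vanishing matches the order of the pole. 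The rest is bookkeeping with non-vanishing analytic multipliers and the measurability of the extension events guaranteed by Proposition \ref{mensu}.
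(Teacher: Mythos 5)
Your proposal follows essentially the same route as the paper: for the first assertion you invert $U$ via Lemma \ref{lema0} (since $\EE U\equiv 1$, $U=\theta_U$ is a non-vanishing random analytic function on $\HH_{1/2}$), so that $A_\psi$ and $A_G$ coincide up to a null set; for the second you combine Lemma \ref{abell} (giving $\PP(A_F)=1$) with the vanishing $F(1)=0$ forced by the strong bias, so that the zero of $F$ at $z=1$ absorbs the simple pole of $\zeta$. This is exactly the paper's argument.

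One correction, though, on what you call the ``main obstacle'': you do \emph{not} need the zero of $F$ at $z=1$ to be simple, and your proposed justification for simplicity would not hold up. A zero of any finite order $m\geq 1$ cancels a simple pole (leaving a zero of order $m-1\geq 0$), and once $F_\omega$ is known to be analytic on $\HH_{1-\alpha}$ with $F_\omega(1)=0$, such an integer $m\geq 1$ exists automatically (Conway, Cor.\ 3.9) --- this is all the paper uses. By contrast, your heuristic that $\exp\bigl(\sum_p \EE f(p)p^{-z}\bigr)$ ``vanishes to first order'' against the pole of $\zeta$ is false in general: for instance $\EE f(p)=-1/2$ gives a strongly biased $f$ with $\EE F(z)$ behaving like $\zeta(z)^{-1/2}$ near $z=1$, which is not even analytic there, let alone a simple zero. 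So the order-matching step should be discarded; the analyticity of $F$ at $z=1$ (which is a \emph{hypothesis}, via $M_f(x)=o(x^{1-\alpha})$ and Lemma \ref{abell}) is what guarantees integer multiplicity, and $F(1)=0$ is all that is then required.
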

\noindent \textit{Proof of the claim.} Observe that $\EE u(k)=0$ if $k>1$ and $\EE u(1)=1$, hence $\EE U(z)=1$. By Lemma \ref{lema0}, $\PP(A_U,A_{U^{-1}})=1$. Since $\psi(z)=U^{-1}(z)G(z)$ and $G(z)=\psi(z)U(z)$:
\begin{align*}
\PP(A_G)=&\PP(A_G \cap A_{U^{-1}})\leq\PP(A_\psi),\\
\PP(A_\psi)=&\PP(A_\psi\cap A_{U})\quad\leq\PP(A_G).
\end{align*}
Hence $\PP(A_G)=\PP(A_\psi)$, completing the first statement of the claim. By hypothesis we have that $M_{f}(x)=o(x^{1-\alpha})$ \textit{a.s.} A direct application of Lemma \ref{abell} implies that $\PP(A_F)=1$. In addition, hypothesis $\EE f(p)<0$ and $\sum_{p\in\mathcal{P}}\frac{\EE f(p)}{p}=-\infty$ implies that $\lim_{\epsilon\to 0^+} \EE F(1+\epsilon)=0$. By applying Lemma \ref{lema0} iii) to $F$ we obtain that $\PP(A_F\cap[F(1)=0])=1$. If $\omega\in A_F\cap[F(1)=0]$, then $F_\omega:\HH_{1-\alpha}\to\CC$ is analytic and $F_\omega(1)=0$, hence there exists an integer $m=m(\omega)\geq1$ such that $\frac{F_{\omega}(1)}{(z-1)^m}$ is analytic in $\HH_{1-\alpha}$ (see \cite{conway} p. 79, Corollary 3.9). Thus $A_F\cap[F(1)=0]\subset A_\varphi$ since, the Riemann zeta function extends analytically to $\CC\setminus\{1\}$ with a simple pole at $z=1$, and hence this simple pole cancel \textit{a.s.} with the zero at $z=1$ of the random analytic function $F$. Hence, $\PP(A_\varphi)=1$, completing the proof of the claim.

Let $I_p$ and $J_p$ be the intervals
\begin{align*}
I_p:=&\bigg{(}\frac{1}{2}-\PP(f(p)=1),\frac{1}{2}\bigg{]},\\
J_p:=&\bigg{(}1-\PP(f(p)=1),\;\;1\bigg{]}.
\end{align*}
Observe that $I_p$ and $J_p$ have the same Lebesgue measure. For each $p\in\mathcal{P}$ let $\psi_p:[0,1]\to[0,1]$ be the interval exchange transformation that exchanges only $J_p$ and $I_p$ such that $\psi_p(I_p)=J_p$ and $\psi(J_p)=I_p$. Let $T:\Omega\to\Omega$ be the measure preserving transformation as in the remark \ref{interinter} and $\omega^*=T(\omega)$. We claim that for each $\omega\in\Omega$, $\varphi_\omega=\psi_{\omega^*}$. Indeed the Euler product representation (see \cite{tenenbaumlivro} p. 106) for $F$, $U$, $G$ and $\zeta$ allow us to deduce the functional equations which holds for all $z\in\HH_1$:
\begin{align*}
\varphi_{\omega}(z)&=\prod_{p\in\mathcal{P}}\frac{p^z+\ind_{J_p}(\omega_p)}{p^z-\ind_{J_p}(\omega_p)},\\
\psi_\omega(z)&=\prod_{p\in\mathcal{P}}\frac{p^z+\ind_{I_p}(\omega_p)}
{p^z-\ind_{I_p}(\omega_p)}.
\end{align*}
Since $\ind_{J_p}(\psi_p(\omega_p))=\ind_{I_p}(\omega_p)$ we obtain that $\varphi_\omega(z)=\psi_{\omega^*}(z)$ for all $z\in\HH_1$ and hence that $T^{-1}(A_\psi)=A_{\varphi}$. This combined with claim \ref{inter} implies that
$$\PP(A_G)=\PP(A_\psi)=\PP(T^{-1}(A_\psi))=\PP(A_\varphi)=1.$$
By Lemma \ref{left} we then conclude that $\sum_{p\in\mathcal{P}}\frac{g(p)}{p^{1-\alpha+\epsilon}}$ converges $\forall\epsilon>0$. Since $|g(p)|\leq 1$, By Kolmogorov two series Theorem, $\sum_{p\in\mathcal{P}}\frac{\EE g(p)}{p^{1-\alpha+\epsilon}}$ converges $\forall\epsilon>0$. Since $\EE g(p)=1+\EE f(p)$ we then conclude, by Proposition \ref{kolmogorovlandau} the proof of Theorem \ref{alphaa1}. \end{proof}
\begin{proof}[Proof of Theorem \ref{alphaa2}] Let $\varphi$, $\psi$, $G$, and $U$ be as in the proof Theorem \ref{alphaa1}. Recall from Lemma \ref{lema0} and Theorem \ref{marcin} that, for fixed $\sigma\in(1/2,1]$
\begin{equation}\label{antepenultima}
U^{-1}(\sigma+it)\ll \exp((\log t)^{1-\sigma}\log\log\log t )\;a.s.
\end{equation}
Moreover, Lemma \ref{lema0} applied for $G$ gives that $G(z)=\EE G(z)\theta(z)$ ($z\in\HH_1$),
where $\theta:\HH_{1/2}\times\Omega\to\CC$ is a non-vanishing random analytic function that satisfies for fixed $\sigma\in(1/2,1]$
\begin{equation}\label{penultima}
\theta(\sigma+it)\ll \exp((\log t)^{1-\sigma}\log\log\log t )\;a.s.
\end{equation}
By hypothesis we have that $\sum_{p\in\mathcal{P}}\frac{1+f(p)}{p^{1-\alpha+\epsilon}}$ converges $\forall\epsilon>0$ \textit{a.s.} Since $|1+f(p)|\leq 2$ $\forall p\in\mathcal{P}$, the Kolmogorov two series Theorem gives that $\sum_{p\in\mathcal{P}}\frac{1+\EE f(p)}{p^{1-\alpha+\epsilon}}$ converges $\forall\epsilon>0$. The construction made in the proof of Theorem \ref{alphaa1} gives that $\EE g(p)=1+\EE f(p)$ and hence that $\sum_{p\in\mathcal{P}}\frac{\EE g(p)}{p^{z}}$ is analytic in $\HH_{1-\alpha}$, since converges absolutely $\forall z\in\HH_{1-\alpha}$. This gives that $\EE G(z)$ is a Dirichlet series that converges absolutely in $\HH_{1-\alpha}$, and hence uniformly bounded on closed half planes. Moreover, by claim \ref{cl1} $\EE G(z)$ is non vanishing analytic function. We conclude that $G:\HH_{1-\alpha}\times\Omega\to\CC$ is a non-vanishing random analytic function, since $G(z)=\EE G(z) \theta(z)$, and satisfies for fixed $\sigma\in(1-\alpha,1]$:
\begin{equation}\label{ultima}
G(\sigma+it)\ll \exp((\log t)^{1-\sigma}\log\log\log t ),\;a.s.
\end{equation}

Let $T:\Omega\to\Omega$ be the measure preserving transformation as in the proof of Theorem \ref{alphaa1} and $\omega^*= T(\omega)$. The construction made in this proof gives $\forall \omega\in\Omega$ and $z\in\HH_1$ that
\begin{equation}\label{ultimato3}
\zeta(z)F_\omega(z)=U_{\omega^*}^{-1}(z)G_{\omega^*}(z).
\end{equation}
Assume $M_\mu(x)=o(x^{1-\alpha+\epsilon})$ for all $\epsilon>0$. In particular $\frac{1}{\zeta}$ is analytic in $\HH_{1-\alpha}$ and for each fixed $\sigma>1-\alpha$, $1/\zeta(\sigma+it)=o(t^\delta)$ for all $\delta>0$ (see \cite{tit} p. 336-337). Also, this implies that for \textit{almost all } $\omega\in\Omega$, $F_\omega$ has analytic extension to $\HH_{1-\alpha}$ given by
$$F_\omega(z)=\frac{G_{\omega^*}(z)}{U_{\omega^*}(z)\zeta(z)}.$$
By $(\ref{antepenultima})$, $(\ref{ultima})$ and the fact that $1/\zeta(\sigma+it)=o(t^\delta)$ for all $\delta>0$, for each fixed $\sigma>1-\alpha$, $F(\sigma+it)\ll t^{\delta}$ $\forall \delta>0$ \textit{a.s.}, and hence this implies (see the proof of Theorem \ref{alphaa}) that $M_f(x)=o(x^{1-\alpha+\epsilon})$ for all $\epsilon>0$, \textit{a.s.} On the other hand, if $M_f(x)=o(x^{1-\alpha+\epsilon})$ $\forall\epsilon>0$ \textit{a.s.}, by Lemma \ref{abell} $F$ has analytic extension to $\HH_{1-\alpha}$. Hence there is $\omega\in\Omega$ such that $U_{\omega^*}^{-1}(z)G_{\omega^*}(z)$ is a non vanishing analytic function in $\HH_{1-\alpha}$, and $F_\omega$ is analytic in this half plane. By (\ref{ultimato3}), we obtain that
$\zeta$ and $F_\omega(z)$ can not vanish in $\HH_{1-\alpha}\setminus\{1\}$, since they are analytic in this set and their product is a non-vanishing analytic function in $\HH_{1-\alpha}$. Hence $\frac{1}{\zeta}$ is analytic in $\HH_{1-\alpha}$ and satisfies in this half plane $\frac{1}{\zeta(\sigma+it)}=o(t^\delta)$ $\forall \delta>0$, which implies that $M_\mu(x)=o(x^{1-\alpha+\epsilon})$ for all $\epsilon>0$. \end{proof}

\section{Concluding Remarks}\label{concluding}

\noindent \textit{Asymptotic behavior of $M_{f_\alpha(x)}$}.

Let $0<\alpha<1/2$ and $f_\alpha$ be as in Theorem \ref{alphaa}. For $z\in\CC$ with $Re(z)>1-\alpha$, denote
$F_\alpha(z)=\sum_{n=1}^\infty \frac{f_\alpha(n)}{n^z}$. Then $\EE F_\alpha(z)= \frac{1}{\zeta(z+\alpha)}$.
A direct application of Lemma \ref{lema0} gives an analytic function $\theta_\alpha:\HH_{1/2}\times\Omega\to\CC$ such that
\begin{equation}\label{perron}
F_\alpha(z)=\frac{\theta_\alpha(z)}{\zeta(z+\alpha)},
\end{equation}
and therefore $F_\alpha$ extends analytically  to $\HH_{1/2}\setminus\{z\in\CC:\zeta(z+\alpha)= 0\}$  \textit{a.s.}
The closed half plane $\overline{\HH}_1$ is a zero free region for $\zeta$ and this implies the convergence of
$\sum_{n=1}^\infty \frac{\mu(n)}{n}$ (see \cite{pnt_newman} for a simple analytic proof and the references therein)
and hence that $M_\mu(x)=o(x)$. Moreover, this zero free region guarantees the existence of an open set containing $\HH_1$
where $\frac{1}{\zeta}$ is well defined and analytic. This fact combined with $(\ref{perron})$ gives the existence of
an open set $R_\alpha\supset \HH_{1-\alpha}$ such that $F_\alpha:R_\alpha\times\Omega\to\CC$ is a random analytic function.
This suggests that $\sum_{n=1}^\infty\frac{f_\alpha(n)}{n^{1-\alpha}}$ converges \textit{a.s.} We remark that the convergence
of this random series for $0<\alpha<1/2$ is, by Theorem \ref{alphaa}, a necessary condition for RH.

Further, we observe that the convergence of the series $F_\alpha(1-\alpha)$ is an tail event (see Proposition \ref{caldal}), in particular, by the Kolmogorov $0-1$ Law it has either probability $0$, or $1$. If we assume that $F_\alpha(1-\alpha)$ converges \textit{a.s.}, then the probability distribution of $F_{\alpha}(1-\alpha)$ is a dirac measure at $\{0\}$. Hence for each $p>x$, $f_{\alpha}(p)$ has no influence in the tail series $\sum_{n>x}\frac{f_\alpha(n)}{n^{1-\alpha}}$, while the value
$f_\alpha(p)$ for any $p\leq x$ is pivotal to evaluate this sum.

Our next result establishes this convergence in the range $0<\alpha<1/3$:
\begin{proposition}\label{vino} Let $0<\alpha<1/3$. Then for each $t\in \RR$ the random series
$$\sum_{n=1}^\infty \frac{f_\alpha(n)}{n^{1-\alpha+it}}$$
converges \textit{a.s.}
\end{proposition}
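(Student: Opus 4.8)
The plan is to run a quantitative Perron/contour argument for the Dirichlet series of $f_\alpha$, using the random analytic continuation $F_\alpha(s)=\theta_\alpha(s)/\zeta(s+\alpha)$ coming from Lemma \ref{lema0} (cf. Claim \ref{equiv1}), the probabilistic growth bound for $\theta_\alpha$ from Theorem \ref{marcin}, and the Vinogradov--Korobov zero-free region for $\zeta$ together with its standard consequence that $1/|\zeta|$ is bounded there by a fixed power of $\log|y|$. Fix $t\in\RR$, set $s_0:=1-\alpha+it$, and collect the analytic input. By Lemma \ref{lema0}, a.s. $\theta_\alpha$ is analytic and non-vanishing on $\HH_{1/2}$, $F_\alpha=\theta_\alpha/\zeta(\cdot+\alpha)$ on $\HH_1$, and $\theta_\alpha(s)=\exp\big(\sum_{p}(f_\alpha(p)-\EE f_\alpha(p))p^{-s}+A(s)\big)$ with $A$ bounded on every $\HH_{\sigma_0}$, $\sigma_0>1/2$. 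Applying Theorem \ref{marcin} (case a, with $X_p=f_\alpha(p)-\EE f_\alpha(p)$ the prime-supported independent family) gives, a.s., $|\theta_\alpha(\sigma+iy)|\ll\exp\big((\log|y|)^{1-\sigma}\log\log\log|y|\big)$ uniformly for $\sigma$ in any fixed compact subset of $(1/2,1]$. Since the Vinogradov--Korobov region has width $\kappa(y)\asymp(\log|y|)^{-2/3}(\log\log|y|)^{-1/3}$ about the line $Re=1$, with $1/|\zeta|$ there at most a fixed power of $\log|y|$, it follows that a.s. $F_\alpha$ continues analytically to $\{\,s:\ Re(s)>1-\alpha-\kappa(Im(s))\,\}$. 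Finally $F_\alpha(s_0)$ is a.s. finite, and equals $0$ when $t=0$, the simple pole of $\zeta$ at $1$ becoming a zero of $F_\alpha$ at $s_0$.

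Next I would apply a truncated Perron formula to $D(s):=F_\alpha(s_0+s)=\sum_n\big(f_\alpha(n)n^{-s_0}\big)n^{-s}$, whose coefficients have modulus $n^{-(1-\alpha)}$, so $D$ converges absolutely for $Re(s)>\alpha$. For $x$ a half-odd-integer, a fixed $c_0\in(\alpha,2)$ and $T=x^{2}$,
\begin{equation*}
\sum_{n\le x}\frac{f_\alpha(n)}{n^{s_0}}=\frac{1}{2\pi i}\int_{c_0-iT}^{c_0+iT}F_\alpha(s_0+s)\,\frac{x^{s}}{s}\,ds+E(x,T),\qquad E(x,T)\ll x^{c_0-2}+x^{-(1-\alpha)}\log x\longrightarrow 0 .
\end{equation*}
I would then shift the segment of integration leftward to the line $Re(s)=-\kappa_1(T)$, with $\kappa_1(T)\asymp(\log T)^{-2/3}(\log\log T)^{-1/3}$ small enough that (by Vinogradov--Korobov, the zero-free width at every relevant height $\le|t|+T$ exceeding $\kappa_1(T)$) the rectangle $[-\kappa_1(T),c_0]\times[-T,T]$ in the $s$-plane contains no zero of $s\mapsto\zeta(s_0+\alpha+s)$; the shift crosses only the simple pole of $x^{s}/s$ at $s=0$, with residue $F_\alpha(s_0)$. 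The two horizontal joins at height $\pm T$ have length $O(1)$ and, since $|x^{s}/s|\ll x^{c_0}/T$ there and $|F_\alpha|\ll\exp((\log T)^{\beta})$ (see below), contribute $\ll x^{c_0-2+o(1)}\to 0$.

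It remains to bound the integral along $Re(s)=-\kappa_1(T)$. There $|x^{s}/s|\le x^{-\kappa_1(T)}/(1+|y|)$; and since $1-(1-\alpha-\kappa_1(T))=\alpha+\kappa_1(T)<\tfrac13$ for $x$ large, the estimates of the first paragraph yield $|F_\alpha(s_0+s)|=|\theta_\alpha(s_0+s)|/|\zeta(s_0+\alpha+s)|\ll\exp\big((\log(2+|y|))^{\beta}\big)\le\exp\big((\log T)^{\beta}\big)$ for any fixed $\beta\in(\alpha,1/3)$ and $x$ large, the $1/\zeta$ factor absorbing harmlessly into the exponent. Hence
\begin{equation*}
\int_{Re(s)=-\kappa_1(T)}F_\alpha(s_0+s)\frac{x^{s}}{s}\,ds\ \ll\ x^{-\kappa_1(T)}\exp\big((\log T)^{\beta}\big)\int_{-T}^{T}\frac{dy}{1+|y|}\ \ll\ x^{-\kappa_1(T)}\exp\big((\log T)^{\beta}\big)\log T .
\end{equation*}
With $T=x^{2}$ one has $\kappa_1(T)\log x\asymp(\log x)^{1/3}(\log\log x)^{-1/3}$ while $(\log T)^{\beta}\asymp(\log x)^{\beta}$, so $\beta<1/3$ makes $-\kappa_1(T)\log x+(\log T)^{\beta}\to-\infty$ and the displayed quantity tends to $0$. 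Collecting the three contributions, $\sum_{n\le x}f_\alpha(n)n^{-s_0}\to F_\alpha(s_0)$ a.s. as $x\to\infty$ through half-odd-integers; a trivial comparison with neighbouring partial sums removes that restriction, so $\sum_{n}f_\alpha(n)n^{-(1-\alpha+it)}$ converges a.s.

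The crux --- and the only genuinely delicate point --- is the contour shift: one must keep the whole rectangle inside the (random) region of analyticity of $F_\alpha$ and check that the $\theta_\alpha$-estimate of Theorem \ref{marcin}, stated at fixed $\sigma$ but uniformly over compact subsets of $(1/2,1]$, applies along the line $Re(s)=-\kappa_1(T)$ (it does, because $s_0+\{Re(s)=-\kappa_1(T)\}$ stays in the fixed strip $\sigma_0\le Re(w)\le 1-\alpha$ for $x$ large and the exceptional null set in Theorem \ref{marcin} does not depend on $\sigma$), and then calibrate $T=T(x)$ so that the Perron error, the horizontal joins and the left-hand integral all vanish. The ceiling $\alpha<1/3$ is forced at exactly one inequality, $-\kappa_1(T)\log x+(\log T)^{\beta}\to-\infty$: it is the competition between the probabilistic growth exponent $1-\sigma=\alpha+o(1)$ of $\theta_\alpha$ (which cannot be improved for random Dirichlet series of this kind) and the $(\log|y|)^{-2/3}$-scale of the best available (Vinogradov--Korobov) zero-free region for $\zeta$, i.e. the arithmetic condition $\alpha+\tfrac23<1$.
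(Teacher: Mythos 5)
Your argument is correct and is essentially the paper's own proof: both run a truncated Perron formula followed by a contour shift to the edge of the Vinogradov--Korobov zero-free region, bound $F_\alpha=\theta_\alpha/\zeta(\cdot+\alpha)$ there by combining Theorem \ref{marcin} (applied to $\sum_p(f_\alpha(p)-\EE f_\alpha(p))p^{-s}$) with the standard $1/\zeta\ll(\log v)^{2/3}(\log\log v)^{1/3}$ estimate, and extract $\alpha<1/3$ from the same competition between the $(\log T)^{\alpha}$ growth of $\theta_\alpha$ and the $(\log T)^{-2/3}$ width of the zero-free region. The only differences are calibration details (you take a fixed abscissa $c_0$ and $T=x^{2}$, the paper takes $c_0=\alpha+1/\log x$ and $T\asymp x^{\alpha}$), which do not change the argument or the threshold.
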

In particular, if $0<\alpha<1/3$ then $M_{f_\alpha}(x)=o(x^{1-\alpha})$ \textit{a.s.}
\begin{proof}[Proof of Proposition \ref{vino}] The Vinogradov-Korobov zero free region for $\zeta$ (see \cite{tit}, p. 135) is the set $R\subset \CC$ of the points $u+iv$ such that
$$1-u\leq \frac{A}{(\log v)^{2/3}(\log\log v)^{1/3}},$$ where $A>0$ is a constant. Moreover, in this region there exists a constant $B>0$ such that for all $u+iv\in R$ we have that
\begin{equation}\label{vino2}
\frac{1}{|\zeta(z)|}\leq C (\log v)^{2/3}(\log\log v)^{1/3}.
\end{equation}
Let $T>0$ and $x>0$ be arbitrarily large. For fixed $t\in\RR$ let
$$R_\alpha=R_\alpha(x,T,t):=\{u+iv\in\CC:u\in[1-\alpha-\delta,1+1/\log x],\; v\in[i(t-T),i(t+T)]\},$$ where
$$\delta=\delta(T):=\frac{A}{(\log (T+t))^{2/3}(\log\log (T+t))^{1/3}}.$$
Decompose $\partial R_\alpha = I_1\cup I_2\cup I_3\cup I_4$, where $I_1$ and $I_3$ are the vertical segments at
$Re(s)=1+1/\log x$ and $Re(s)=1-\alpha-\delta$ respectively, and $I_2$ and $I_4$ are the horizontal segments at
$Im(s)=T+t$ and $Im(s)=-T+t$ respectively. In the sequel $z=1-\alpha+it$ is a fixed complex number. For $1\leq k \leq 4$ let
$$J_k=\frac{1}{2\pi i}\int_{I_k}\frac{F(s)}{s-z}x^{s-z}ds.$$
By the Cauchy integral formula we have that $F(z)=\sum_{n=1}^4 J_n$. The Perron's formula (see \cite{tenenbaumlivro}, p. 133 Corollary 2.1)
gives that
$$J_1=\sum_{n\leq x}\frac{f_\alpha(n)}{n^z}-U(x,T),$$
where
$$U(x,T)\ll \frac{x^\alpha}{T}(\log (xT))+\frac{1}{x^{1-\alpha}},$$
and hence that
$$F(z)-\sum_{n\leq x}\frac{f_\alpha(n)}{n^z}=J_2+J_3+J_4-U(x,T).$$
To complete the proof we will show that $U(x,T)$ and $J_n$ ($2\leq n\leq 4$) become arbitrarily small as $x$ and $T$ become large in a specific way. Let
$$V(x,T):=\sup_{z\in\partial R_\alpha\setminus I_1} |F_\alpha(z)|.$$
By $(\ref{perron})$
$$V(x,T)\leq \sup_{z\in\partial R_\alpha\setminus I_1} |\theta_\alpha(z)|\sup_{z\in\partial R_\alpha\setminus I_1} |\zeta^{-1}(z+\alpha)|.$$
By $(\ref{vino2})$ we have that $\sup_{z\in\partial R_\alpha\setminus I_1} |\zeta^{-1}(z+\alpha)|\leq C (\log (T+t))^{2/3}(\log\log T+t)^{1/3}$.
On the other hand, an application of Theorem \ref{marcin} combined with Lemma \ref{lema0} gives that
\begin{align*}
\sup_{z\in\partial R_\alpha\setminus I_1} |\theta_\alpha(z)|\ll &\exp(\;(\log (T+t))^{\alpha-\delta} (\log\log (T+t))^{1-\alpha-\delta}\;)\\
                                                \ll& \exp(2\;(\log (T))^{\alpha} (\log\log (T))^{1-\alpha}\;), a.s.
\end{align*}
This estimates give that
\begin{equation}\label{vino3}
V(x,T)\ll (\log T)^{2/3}(\log\log T)^{1/3}\exp(2\;(\log T)^{\alpha} (\log\log T)^{1-\alpha}\;).
\end{equation}

\noindent \textit{Estimate for $J_2$ and $J_4$.} We have that
\begin{align*}
J_2 &=\int_{1-\alpha-\delta+i(t+T)}^{1+1/\log x+i(t+T)} \frac{F_\alpha(s)}{s-z}x^{s-z}ds\\
    &\ll V(x,T)x^{\alpha}\int_{1-\alpha-\delta}^{1+1/\log x} \frac{du}{|u-1+\alpha+iT|}\\
    & \ll V(x,T)\frac{x^\alpha}{T}.
\end{align*}
Similarly we get the same bound for $J_4$.

\noindent \textit{Estimate for $J_3$.} We have that
\begin{align*}
J_3 &   =   \int_{1-\alpha-\delta+it-iT}^{1-\alpha-\delta+it+iT}\frac{F(s)}{s-z}x^{s-z}ds \\
    & \ll   \frac{V(x,T)}{x^\delta}\int_{it-iT}^{it+iT}\frac{dy}{\sqrt{\delta^2+(y-t)^2}} \\
    & \ll   \frac{V(x,T)}{x^\delta}\log(T/\delta).\\
    & \ll \log(T/\delta)\exp\bigg{(}2\;(\log T)^{\alpha} (\log\log T)^{1-\alpha}-A\frac{\log x}{(\log T)^{2/3}(\log\log T)^{1/3} }  \bigg{)}
\end{align*}
In order to make $\lim_{x,T\to\infty} J_3=0$, the estimate above gives that $x$ and $T$ must be related in such a way that  $\frac{\log x}{(\log T)^{2/3}}$
grows faster than $(\log T)^{\alpha}$. This can be achieved by making  $\log x = (\log T)^\beta$ with $\beta>\alpha+2/3$. Using the estimate for $J_2$
we see that in order to make $\lim_{x,T\to\infty} J_2=0$ we need that
$$x^\alpha=o\bigg{(}\frac{T}{V(x,T)}\bigg{)}$$
and hence that $\beta\leq 1$. We conclude that in the range $\alpha<1/3$, the choice $x^{\alpha}=T^{1-\epsilon}$ for some fixed small
$\epsilon>0$ implies that $\lim_{x,T\to\infty}U(x,T)=0$ and that $\lim_{x,T\to\infty}J_n=0$ for $2\leq n\leq 4$, completing the proof.  \end{proof}

\noindent \textit{Remark of Theorem \ref{pretentious}.}

The proof of the first statement of Theorem \ref{pretentious} is an adaptation of the proof of Theorem \ref{viz1/2}. Next, we will state and prove a result that extends the second statement of this Theorem.

Let $f_\alpha$ be as in Theorem \ref{alphaa} and denote $f_{\alpha,1}=f_\alpha$. For $2\leq k\in\NN$ define $f_{\alpha,k}$
the binary random multiplicative function such that for $p\in\mathcal{P}$
$$\EE f_{\alpha,k}(p)=\begin{cases}-1,\mbox{ if }\frac{k}{p^\alpha}\geq 1;\\
                                   -\frac{k}{p^\alpha}, \mbox{ otherwise}.\end{cases}   $$
Then the Dirichlet series $F_{\alpha,k}(z)=\sum_{n=1}^\infty \frac{f_{\alpha,k}(n)}{n^z}$ converges for $Re(z)>1-\alpha$ and has a zero of multiplicity $k$ at $z=1-\alpha$ \textit{a.s.}

Observe that for weakly biased $f$ that satisfies $M_f(x)=o(x^{1-\alpha-\epsilon})$ for some $\epsilon>0$ \textit{a.s.}, and the series $\sum_{p\in \mathcal{P}}\frac{\EE f(p)}{p^{1-\alpha+\epsilon}}$ converges for $\epsilon>0$ and diverges to $-\infty$ for $\epsilon=0$, then $F(z)=\sum_{n=1}^\infty \frac{f(n)}{n^z}$ has a zero at $z=1-\alpha$ \textit{a.s.}, and hence this point is a zero of finite multiplicity of $F$ \textit{a.s.} The fact that $\forall z\in\HH_1$, $F(z)$ can be represented as the infinity product of independent random variables
$$F(z)=\prod_{p\in\mathcal{P}}\bigg{(}1+\frac{f(p)}{p^z}\bigg{)},$$
and for each $p\in\mathcal{P}$ and $z\in\HH_0$ we have that $\big{|}1+\frac{f(p)}{p^z}\big{|}>0$, by Proposition \ref{mensu} applied to $\frac{1}{F}$ we obtain that the multiplicity of any zero of $F$ in the half plane $\HH_{1/2}$ is an integer valued tail random variable, and hence it is constant \textit{a.s.} This suggests that $F$ behaves like $F_{\alpha,k}$ for some $k\geq 1$ and raises the question if $f$ could be related to $f_{\alpha,k}$. Our next result states:
\begin{theorem}\label{pretentiousk} Let $f$ be a weakly biased random multiplicative function such that $\EE f(p)=-\frac{\delta_p}{p^\alpha}$, where $0<\alpha<1/2$ and for some $k\in\NN$, $\forall{p\in\mathcal{P}}$ we have $\delta_p \leq k $. Assume that $z=1-\alpha$ is a zero of multiplicity $k$ of $F$ \textit{a.s.} Let $0<\epsilon<1/2-\alpha$. Then the assumption $M_f(x)=o(x^{1-\alpha-\epsilon})$ \textit{a.s.} implies that $\mathbb{D}_{\sigma}(f,f_{\alpha,k})<\infty$ for all $\sigma>1-\alpha-\epsilon$ \textit{a.s.} and that $\zeta$ has no zeros in $\HH_{1-\epsilon}$.
\end{theorem}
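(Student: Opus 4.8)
The plan is to mimic the strategy of Theorem \ref{alphaa1} and of the second part of Theorem \ref{pretentious}: transfer the analyticity of $F$ coming from the bound on $M_f$ first to $\EE F$, then, through the Euler product factorisation supplied by claim \ref{cl1}, to the non‑negative Dirichlet series $\sum_{p\in\mathcal{P}}\frac{k-\delta_p}{p^{z+\alpha}}$ and to $\zeta(z+\alpha)^{-k}$, and finally to read off the statements about $\mathbb{D}_\sigma$ and about the zeros of $\zeta$.

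First I would note, by Lemma \ref{abell}, that $M_f(x)=o(x^{1-\alpha-\epsilon})$ \textit{a.s.} forces $F(z)=\sum_{n\ge1}f(n)n^{-z}$ to be a random analytic function on $\HH_{1-\alpha-\epsilon}$. Since $F=\EE F\cdot\theta$ on $\HH_1$ with $\theta:\HH_{1/2}\times\Omega\to\CC$ a non‑vanishing random analytic function (Lemma \ref{lema0}), the quotient $\EE F=F/\theta$ extends \textit{a.s.} to a random analytic function $\widetilde{\EE F}$ on $\HH_{1-\alpha-\epsilon}$, agreeing on $\HH_1$ with the Dirichlet series of $\EE f$. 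Because $\sum_p\delta_p p^{-(z+\alpha)}$ converges absolutely on $\HH_{1-\alpha}$, claim \ref{cl1} shows $\EE F$ is non‑vanishing there, hence so is $F$; therefore the assumed zero of multiplicity $k$ of $F$ lies on the line $\Re z=1-\alpha$, at $z=1-\alpha$, and (again because $\theta$ is non‑vanishing) so does the order‑$k$ zero of $\widetilde{\EE F}$.

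The core step is to compare $\widetilde{\EE F}$ with $\EE F_{\alpha,k}$. Applying claim \ref{cl1} to $\EE f$ and to $\EE f_{\alpha,k}$ and using $\sum_p p^{-s}=\log\zeta(s)+(\text{analytic on }\HH_{1/2})$ (the finitely many primes with $kp^{-\alpha}\ge1$ contributing only an entire correction), I would obtain on $\HH_1$ identities of the shape
\[
\EE F(z)=\zeta(z+\alpha)^{-k}\exp\Bigl(\sum_{p}\tfrac{k-\delta_p}{p^{z+\alpha}}\Bigr)\exp(C_1(z)),\qquad \EE F_{\alpha,k}(z)=\zeta(z+\alpha)^{-k}\exp(C_2(z)),
\]
with $C_1,C_2$ analytic on $\HH_{1/2}\supset\HH_{1-\alpha-\epsilon}$. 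Rearranging the first identity gives $\exp\bigl(\sum_p\tfrac{k-\delta_p}{p^{z+\alpha}}\bigr)=\widetilde{\EE F}(z)\,\zeta(z+\alpha)^{k}\exp(-C_1(z))$, first on $\HH_1$ and then by analytic continuation. The right‑hand side is in fact analytic on all of $\HH_{1-\alpha-\epsilon}$: $\widetilde{\EE F}$ and $\exp(-C_1)$ are analytic there, while $\zeta(z+\alpha)^{k}$ is analytic off $z=1-\alpha$ and has there a pole of order $k$ that is exactly cancelled by the order‑$k$ zero of $\widetilde{\EE F}$ (leaving a non‑zero value). Hence $\exp\bigl(\sum_p\tfrac{k-\delta_p}{p^{z+\alpha}}\bigr)$ extends analytically to $\HH_{1-\alpha-\epsilon}$.

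From here, $L(z):=\sum_p\frac{k-\delta_p}{p^{z+\alpha}}$ is a Dirichlet series with non‑negative coefficients ($0\le\delta_p\le k$) whose exponential is the nowhere‑zero analytic function just produced, so I would invoke Landau's theorem for such series (cf.\ \cite{apostol}, Theorem 11.13) to conclude that its abscissa of convergence $\sigma_c$ satisfies $\sigma_c\le1-\alpha-\epsilon$: otherwise the real point $\sigma_c$ would be a singularity of $L$ while $\exp(L)$ is analytic there, forcing $\exp(L)(\sigma_c)=0$; as $\zeta$ has no real zeros in $(0,\infty)$, the factorisation then gives $\widetilde{\EE F}(\sigma_c)=0$, and since $\widetilde{\EE F}$ is non‑vanishing on $\HH_{1-\alpha}$ one is left with $\sigma_c=1-\alpha$, contradicting that $\widetilde{\EE F}(z)\zeta(z+\alpha)^{k}\exp(-C_1(z))$ is non‑zero at $z=1-\alpha$. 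With $\sigma_c\le1-\alpha-\epsilon$ in hand, for $\sigma>1-\alpha-\epsilon$ the series $\sum_p(k-\delta_p)p^{-(\sigma+\alpha)}$ converges; since under the uniform coupling $\PP(f(p)\ne f_{\alpha,k}(p))=\tfrac12|\EE f(p)-\EE f_{\alpha,k}(p)|$ and $\EE f(p)-\EE f_{\alpha,k}(p)=(k-\delta_p)p^{-\alpha}$ for all large $p$, one gets $\EE\,\mathbb{D}_\sigma(f,f_{\alpha,k})^2<\infty$, and as $\sigma>1-\alpha-\epsilon>1/2$ the two‑series criterion recorded before the statement yields $\mathbb{D}_\sigma(f,f_{\alpha,k})<\infty$ \textit{a.s.}; and rewriting the factorisation of $\EE F$ as $\zeta(z+\alpha)^{-k}=\widetilde{\EE F}(z)\exp(-L(z))\exp(-C_1(z))$, whose right‑hand side is now analytic on $\HH_{1-\alpha-\epsilon}$, shows $\zeta(z+\alpha)\ne0$ on $\HH_{1-\alpha-\epsilon}$, i.e.\ $\zeta$ has no zeros in $\HH_{1-\epsilon}$. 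The main obstacle I anticipate is precisely this Landau step: pushing the abscissa of convergence down to $\Re z=1-\alpha-\epsilon$ (rather than merely slightly past $1-\alpha$) relies on $\widetilde{\EE F}$ — equivalently $F$ — having no zeros in the open strip $1-\alpha-\epsilon<\Re z<1-\alpha$, which is where the hypothesis that $z=1-\alpha$ is the only zero of $F$, combined with $\delta_p\le k$ and the constancy of the multiplicity discussed before the statement, must be used in full strength.
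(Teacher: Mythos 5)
Your route is genuinely different from the paper's: you work with the deterministic continuation of $\EE F$ and apply Landau's theorem directly to the prime Dirichlet series $L(z)=\sum_p(k-\delta_p)p^{-(z+\alpha)}$, whereas the paper transfers the problem, via the interval-exchange coupling $\varphi_\omega=\psi_{T\omega}$ of Remark \ref{interinter} and the argument of Theorem \ref{alphaa1}, to the positively biased multiplicative function $g$ with $\EE g(p)=1-\EE f(p)f_{\alpha,k}(p)$, and then applies Landau (inside Lemma \ref{left}) to the \emph{full} Dirichlet series $\sum_n \EE g(n)n^{-z}$. This difference is not cosmetic: it is exactly where your argument has a genuine gap --- the one you flag at the end but cannot discharge. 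To push the abscissa of convergence of $L$ down to $1-\alpha-\epsilon$ you must know that the analytic extension of $\exp(L)$, namely $\widetilde{\EE F}(z)\zeta(z+\alpha)^k\exp(-C_1(z))$, is non-vanishing at every real point of $(1-\alpha-\epsilon,1-\alpha)$, equivalently that $F$ has no zero on that segment \textit{a.s.} The theorem's hypothesis only asserts that $z=1-\alpha$ \emph{is} a zero of multiplicity $k$; it says nothing about zeros of $F$ in the strip $1-\alpha-\epsilon<Re(z)<1-\alpha$ (the surrounding discussion in the paper concerns only the closed half plane $Re(z)\geq 1-\alpha$). So the ``hypothesis that $z=1-\alpha$ is the only zero of $F$'' which you propose to use ``in full strength'' is not available, and your Landau step only yields $\sigma_c\leq\sigma^*$ where $\sigma^*$ is the largest real zero of $\widetilde{\EE F}$ below $1-\alpha$, if any. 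Note also the circularity: the identity $\zeta(z+\alpha)^{-k}=\widetilde{\EE F}\exp(-L-C_1)$ would rule out such zeros only \emph{after} $L$ is known to be analytic in the strip.

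The repair is precisely the paper's device. Rather than taking logarithms, consider the non-negative multiplicative data $\EE g(p)=(k-\delta_p)p^{-\alpha}$ (for all large $p$): the series $\EE G(z)=\prod_p\bigl(1+\EE g(p)p^{-z}\bigr)=\exp(L(z))\exp(A(\EE g,z))$ has non-negative coefficients over all of $\NN$, and your construction already shows it extends analytically to $\HH_{1-\alpha-\epsilon}$ (your cancellation of the order-$k$ pole of $\zeta(z+\alpha)^k$ by the order-$k$ zero of $F$ is the same observation the paper uses to get $\PP(A_\varphi)=1$). Landau applied to $\EE G$ forces convergence of $\sum_n\EE g(n)n^{-z}$ throughout the half plane of analyticity with no non-vanishing input whatsoever, and the standard equivalence for non-negative multiplicative functions (\cite{tenenbaumlivro}, p.~106, Theorem 2) then gives convergence of the prime sum $L$; from there your two-series argument for $\mathbb{D}_\sigma(f,f_{\alpha,k})$ and your deduction of the zero-free region for $\zeta$ go through. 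The decisive point, which your write-up misses, is \emph{which} non-negative Dirichlet series Landau is applied to.
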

\begin{proof} We assume that $\forall p\in\mathcal{P}$, $f(p)$ and $f_{\alpha,k}(p)$ are given by $(\ref{indicator})$. In particular, $f$ and $f_{\alpha,k}$ are uniformly coupled. Let $g$ be a random multiplicative function such that $\forall p\in\mathcal{P}$, $g(p)$ is given by $(\ref{indicator})$ and
\begin{equation}\label{pret}
\EE g(p)=2(\PP(f_{\alpha,k}(p)=-1)-\PP(f(p)=-1))=1-\EE f(p)f_{\alpha,k}(p).
\end{equation}
Let $u$ be an unbiased binary random multiplicative function given by $(\ref{indicator})$. Let $U$, $F$, $F_{\alpha,k}$ and $G$ be the associated Dirichlet series of $u$, $f$, $f_{\alpha,k}$ and $g$ respectively. For $p\in\mathcal{P}$ let $I_p$ and $J_p$ be the intervals $(\PP(f(p)=-1),\PP(f_{\alpha,k}(p)=-1)]$ and $(\PP(g(p)=-1),1/2]$, respectively. Observe that $I_p$ and $J_p$ have the same Lebesgue measure. Let $\varphi=\frac{F}{F_{\alpha,k}}$ and $\psi=\frac{G}{U}$. Then there exists a measure preserving transformation $T$ such that for each $\omega\in\Omega$ we have $\varphi_\omega=\psi_{T\omega}$ (see the proof of Theorem \ref{alphaa1}). Arguing in the same way as in the proof of Theorem \ref{alphaa1} we conclude that $G$ extends analytically to $\HH_{1-\alpha-\epsilon}$ \textit{a.s.} and this implies, by Lemma \ref{left}, that the series $\sum_{p\in\mathcal{P}}\frac{g(p)}{p^{\sigma}}$ converges for all $\sigma>1-\alpha-\epsilon$ \textit{a.s.} This fact and the Kolmogorov Two-Series Theorem combined with $(\ref{pret})$ gives that $\mathbb{D}_\sigma(f,f_{\alpha,k})<\infty$ for all $\sigma>1-\alpha-\epsilon$ \textit{a.s.} Moreover the \textit{a.s.} convergence of $\sum_{p\in\mathcal{P}}\frac{g(p)}{p^{z}}$ in $\HH_{1-\alpha-\epsilon}$ implies that $G$ is a non-vanishing random analytic function in $\HH_{1-\alpha-\epsilon}$ so as $\psi$. We conclude that $\varphi$ is a non-vanishing random analytic function in $\HH_{1-\alpha-\epsilon}$. By Lemma \ref{lema0} there exists a non-vanishing random analytic function $\Lambda:\HH_{1/2}\times\Omega\to\CC$ such that $F_{\alpha,k}(z)\zeta^{k}(z+\alpha)=\Lambda(z)$, $\forall z\in\HH_1$. Therefore for $Re(z)>1$ we have:
$$F(z)\zeta^k(z+\alpha)=\varphi(z)\Lambda(z).$$
Since $\varphi\Lambda\neq 0$ in $\HH_{1/2}$ \textit{a.s.} and $F(z)\zeta^{k}(z+\alpha)$ is the product of two random analytic functions in $\HH_{1-\alpha-\epsilon}\setminus\{1-\alpha\}$ we have that none of them can vanish in this set. In particular $\zeta$ is free of zeros in $\HH_{1-\epsilon}$.
\end{proof}

\textbf{Acknowledgements.} Authors thank K. Soundararajan and S. Chatterjee for fruitful discussions. We also thank K.Soundararajan for pointing
out and explaining work of D. Koukoulopoulos. M.A. thanks C. G. Moreira  for fruitful discussions.

\noindent We thank MSRI (Berkeley) and Stanford University for hospitality and financial support during multiple visits. Both authors were supported by IMPA, Cnpq and Faperj.

\appendix
\section{}

\begin{theorem}\label{abell} Let $f:\NN\to[0,1]$ be such that the Dirichlet series $\sum_{k=1}^\infty\frac{f(k)}{k^z}$ converges absolutely for $z\in\HH_a$ ($a>0$) and that for some $0<c<a$
$$\sup_{x\geq 1}\frac{|M_f(x)|}{x^c}=C<\infty.$$
Then the Dirichlet series $\sum_{k=1}^\infty\frac{f(k)}{k^z}$ converges for all $z\in\HH_c$ and $F:\HH_c\to\CC$ given by $F(z):=\sum_{k=1}^\infty\frac{f(k)}{k^z}$ is analytic.
\end{theorem}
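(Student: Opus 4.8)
The plan is to prove this by Abel (partial) summation, turning the Dirichlet series into an integral transform of $M_f$ and then reading off convergence and analyticity directly from the hypothesis $|M_f(x)|\le Cx^c$. The absolute convergence on $\HH_a$ is needed only to make sense of $F$ as a genuine Dirichlet series; the real content is the estimate coming from the bound on the partial sums, and the constant $c$ may be any number in $(0,a)$.

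First I would fix $z$ with $\Re(z)=\sigma>c$ and apply summation by parts with $A(t):=M_f(t)=\sum_{k\le t}f(k)$ and $\phi(t):=t^{-z}$, $\phi'(t)=-z\,t^{-z-1}$: for every integer $N\ge 1$,
$$\sum_{k=1}^N\frac{f(k)}{k^z}=\frac{A(N)}{N^z}+z\int_1^N\frac{A(t)}{t^{z+1}}\,dt.$$
The boundary term satisfies $|A(N)/N^z|\le C\,N^{c-\sigma}\to 0$ as $N\to\infty$, since $c-\sigma<0$; and $|A(t)\,t^{-z-1}|\le C\,t^{c-\sigma-1}$ with $\int_1^\infty t^{c-\sigma-1}\,dt<\infty$ because $c-\sigma-1<-1$, so the improper integral converges absolutely. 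Letting $N\to\infty$ then shows that the Dirichlet series converges for every $z\in\HH_c$, with the integral representation
$$F(z)=z\int_1^\infty\frac{M_f(t)}{t^{z+1}}\,dt.$$

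For analyticity I would note that, for each fixed $\sigma_0>c$, the bound $|M_f(t)\,t^{-z-1}|\le C\,t^{c-\sigma_0-1}$ holds uniformly on $\{\Re(z)\ge\sigma_0\}$, so the integral defining $F$ converges uniformly on compact subsets of $\HH_c$; since $z\mapsto M_f(t)\,t^{-z-1}$ is entire for each $t$, Morera's theorem together with Fubini (or differentiation under the integral sign) gives that $z\mapsto\int_1^\infty M_f(t)\,t^{-z-1}\,dt$ is analytic on $\HH_c$, hence so is $F$. Alternatively, one can simply feed any point $\sigma_0>c$ of convergence into the classical fact about Dirichlet series already invoked in the proof of Proposition \ref{kolmogorovlandau} (\cite{apostol}, Theorems 11.8 and 11.11), which gives convergence and analyticity throughout $\HH_{\sigma_0}$, and let $\sigma_0\downarrow c$.

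This argument is essentially routine; the only step requiring any care is the passage to analyticity, i.e. verifying uniform-on-compacts convergence of the integral (equivalently, isolating a genuine point of convergence on which to apply the classical Dirichlet-series theorem). All the estimates above use only $\sigma>c$, so no further hypotheses are needed.
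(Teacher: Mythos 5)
Your proof is correct and follows essentially the same route as the paper: partial (Abel) summation against the bound $|M_f(x)|\leq Cx^{c}$ to get convergence for $\Re(z)>c$, then analyticity from uniform convergence on compacts — your fallback via the classical Dirichlet-series theorem (\cite{apostol}, Theorems 11.8 and 11.11) is exactly what the paper does after verifying convergence at the real points $c+\delta$. The only cosmetic difference is that you use the integral form of Abel summation at general complex $z$ (yielding the representation $F(z)=z\int_{1}^{\infty}M_f(t)t^{-z-1}\,dt$) whereas the paper uses the discrete summation-by-parts formula and a Cauchy-criterion estimate; both are sound.
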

\begin{proof} Let $y<x$ be natural numbers. Let $\{a_k\}_{k\in\NN}$ and  $\{b_k\}_{k\in\NN}$ be two sequences of real numbers and let $\Delta a_k := a_k-a_{k-1}$. The partial summation formula states that:
\begin{equation}\label{partes}
\sum_{k=y}^x a_k\Delta b_k = a_xb_x-a_yb_{y-1}-\sum_{k=y}^{x-1}b_k\Delta a_{k+1}.
\end{equation}
Let $\delta>0$. A direct application of $(\ref{partes})$ gives that:
\begin{align*}
\bigg{|}\sum_{k=y}^x\frac{f(k)}{k^{c+\delta}}\bigg{|}&=\bigg{|}\sum_{k=y}^x\frac{\Delta M_f(k)}{k^{c+\delta}}\bigg{|}\\
&= \bigg{|}\frac{M_f(x)}{x^{c+\delta}}-\frac{M_f(y)}{(y-1)^{c+\delta}}-\sum_{k=y}^
{x-1}M_f(k)\Delta\frac{1}{(k+1)^{c+\delta}}\bigg{|}
\end{align*}
By hypothesis $\lim_{x\to\infty}\frac{M_f(x)}{x^{c+\delta}}=0$ and $\lim_{y\to\infty}\frac{M_f(y)}{x^{c+\delta}}=0$.
Since $\Delta\frac{1}{(k+1)^{c+\delta}}=-(c+\delta)\int_{k}^{k+1}\frac{dt}{t^{1+c+\delta}}\ll \frac{1}{k^{1+c+\delta}}$, we obtain
$$\bigg{|}\sum_{k=y}^{x-1}M_f(k)\Delta\frac{1}{(k+1)^{c+\delta}}\bigg{|}\ll \sum_{k=y}^x \frac{1}{k^{1+\delta}}=o(y).$$
We conclude that Dirichlet series $F(c+\delta)$ is convergent for every $\delta>0$. A classical result in the Theory of the Dirichlet series (see \cite{apostol}, Theorems 11.8 and 11.11) states that if the series $\sum_{k=1}^\infty \frac{f(k)}{k^{z}}$ converges for $z_0=\sigma_0+it_0$ then it converges for all $z\in\HH_{\sigma_0}$ and also uniformly on compact subsets of this half plane. Thus the function $z\in\HH_{\sigma_0}\mapsto\sum_{k=1}^\infty \frac{c_k}{k^z}$ is analytic. \end{proof}
\begin{proposition}\label{caldal0} Let $f$ be a random multiplicative function. Then for each $c>1/2$
$$E_{f,c}:=\bigg{\{}\omega\in\Omega: \sum_{k=1}^\infty \frac{f_\omega(k)}{k^c}\mbox{ converges }\bigg{\}}$$
is an tail event and hence, $\PP(E_{f,c})\in\{0,1\}$.
\end{proposition}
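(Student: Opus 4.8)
The plan is to show that the event $E_{f,c}$ lies, \emph{as a subset of $\Omega$}, in the tail $\sigma$-algebra of the independent sequence $(f(p))_{p\in\mathcal{P}}$, after which Kolmogorov's $0$--$1$ law gives $\PP(E_{f,c})\in\{0,1\}$. The underlying principle is that deleting a single prime from the Dirichlet series does not change whether it converges.

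First I would fix $c>1/2$ and a prime $q$, and for $x\geq 1$ set $T(x):=\sum_{n\leq x}\frac{f(n)}{n^c}$ and $S_q(x):=\sum_{n\leq x,\,q\nmid n}\frac{f(n)}{n^c}$, with the convention $T(y)=S_q(y)=0$ for $y<1$. Splitting each squarefree $n$ uniquely as $n=q^{\varepsilon}m$ with $\varepsilon\in\{0,1\}$, $q\nmid m$, and using the multiplicative relation $(\ref{multiplicative})$, one obtains the identity
$$T(x)=S_q(x)+\frac{f(q)}{q^c}\,S_q(x/q),\qquad x\geq 1.$$
Put $a=a(q):=f(q)/q^c$; since $f(q)=\pm1$ and $q\geq 2$, $c>1/2$, we have $|a|<1$ and $1+a\neq 0$ for every $\omega$. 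I claim that, for each fixed $\omega$, $T$ converges as $x\to\infty$ if and only if $S_q$ does. If $S_q(x)\to L_q$, then also $S_q(x/q)\to L_q$, so $T(x)\to(1+a)L_q$. Conversely, iterating the displayed identity $J=J(x)$ times (with $J$ large enough that $q^J>x$, so that the tail term $S_q(x/q^J)$ vanishes) gives
$$S_q(x)=\sum_{j\geq 0}(-a)^j\,T(x/q^j),$$
a sum with only finitely many nonzero terms for each $x$; if $T(x)\to L$ then, since a convergent step function is bounded, $|(-a)^j T(x/q^j)|\leq|a|^j\sup_{y\geq1}|T(y)|$ is a summable bound, so dominated convergence for series yields $S_q(x)\to\sum_{j\geq0}(-a)^jL=L/(1+a)$. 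Consequently $E_{f,c}=\{\omega:\sum_{q\nmid n}f_\omega(n)/n^c\ \text{converges}\}$ as subsets of $\Omega$.

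The same splitting can be iterated: for primes $q_1<\dots<q_n$, convergence of $T$ is equivalent to convergence of $S_{q_1,\dots,q_n}(x):=\sum_{m\leq x,\ q_i\nmid m\ \forall i}f(m)/m^c$. Taking $q_1,\dots,q_n$ to be the first $n$ primes $p_1<\dots<p_n$, the value $S_{p_1,\dots,p_n}(x)$ depends only on $\{f(p):p>p_n\}$, so $E_{f,c}=\{\omega:S_{p_1,\dots,p_n}\ \text{converges}\}$ is $\sigma(f(p_k):k>n)$-measurable. Since this holds for every $n$, the event $E_{f,c}$ belongs to $\bigcap_{n\geq1}\sigma(f(p_k):k>n)$, the tail $\sigma$-algebra of the independent family $(f(p_k))_{k\in\NN}$, and Kolmogorov's $0$--$1$ law completes the proof. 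The main obstacle is the ``only if'' half of the equivalence in the previous paragraph: one must check that the expansion $S_q(x)=\sum_{j}(-a)^jT(x/q^j)$ terminates for each fixed $x$ and then justify interchanging $\lim_{x\to\infty}$ with the summation via the bound $|a|<1$ and the boundedness of $T$; the remainder is bookkeeping.
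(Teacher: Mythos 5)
Your proof is correct and follows essentially the same route as the paper: both arguments rest on the observation that deleting the primes up to $p_n$ from the Dirichlet series does not affect its convergence, so that $E_{f,c}$ lies in $\sigma(f(p):p>p_n)$ for every $n$, and then apply Kolmogorov's $0$--$1$ law. The only difference is in execution — the paper phrases the key equivalence as $f=u\ast h_D$ with $u$ supported on the small primes and cites standard convergence theorems for Dirichlet convolutions, while you prove the same equivalence one prime at a time via the explicit identity $T(x)=S_q(x)+\tfrac{f(q)}{q^c}S_q(x/q)$ and its terminating inversion, which is a self-contained and entirely valid substitute.
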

\begin{proof} Let $k\in\NN$ and $D=D(k):=\{p\in\mathcal{P}:p\leq k\}$. Let $f:\NN\to\{-1,0,1\}$ and $h_D:\NN\to\{-1,0,1\}$ be multiplicative functions (supported on the squarefree integers) such that
$$h_D(p)=\begin{cases} 0,&\mbox{ if }p\in D,\\
f(p),&\mbox{ if }p\notin D,   \end{cases}$$
\begin{claim}\label{caldal1} The series $\sum_{n=1}^\infty \frac{f(n)}{n^c}$ converges if and only if $\sum_{n=1}^\infty \frac{h_D(n)}{n^c}$ converges.
\end{claim}

\noindent\textit{Proof of the claim}: Let $u$ be a multiplicative function supported on the set of the squarefree integers such that
$$u(p)=\begin{cases} f(p),&\mbox{ if }p\in D,\\
0,&\mbox{ if }p\notin D.   \end{cases}$$
Then for all $p\in\mathcal{P}$ $(u\ast h_D)(p)=u(p)+h_D(p)=f(p)$ and for every $l\geq 2$
$$(u\ast h_D)(p^l)=\sum_{k=0}^lu(p^k)h_D(p^{l-k})=u(1)h_D(p^l)+u(p)h_D(p^{l-1})=0,$$
since $u(p^m)=h_D(p^m)=0$ for every $m\geq 2$ and $u(p)h_D(p)=0$. This shows that $u\ast h_D (p^m)=f(p^m)$ $\forall p\in\mathcal{P}$ and $\forall m\geq1$. Since $u$ and $h_D$ are multiplicative, their convolution also is and hence
$f(n)=h_D\ast u(n)$ $\forall n\in\NN$. Let $U(z)$ be the Dirichlet series of $u$. Then $U$ has Euler product representation
$$U(z)=\prod_{p\in D}\bigg{(}1+\frac{u(p)}{p^z}\bigg{)}\quad (z\in\HH_1),$$
and since $D$ is finite, we obtain that the Dirichlet series $U(z)$ converges absolutely for all $z\in\HH_0$ (\cite{tenenbaumlivro} page 106, Theorem 2). Hence the convergence of $\sum_{n=1}^\infty \frac{h_D(n)}{n^c}$ implies the convergence $\sum_{n=1}^\infty \frac{f(n)}{n^c}$ (\cite{tenenbaumlivro} p. 122, Notes 1.1). On the other hand, let $u^{-1}$ be the Dirichlet inverse of $u$, that is, $(u\ast u^{-1})(n)=\ind_{\{1\}}(n)$. Then $u^{-1}$ is multiplicative, $|u^{-1}(n)|\leq 1$ $\forall n\in\NN$ and $\sum_{n=1}^\infty\frac{u^{-1}(n)}{n^z}=\frac{1}{U(z)}$ ($z\in\HH_1$). Moreover:
$$\frac{1}{U(z)}=\prod_{p\in D}\bigg{(}1+\sum_{m=1}^\infty(-1)^m \frac{u(p)^m}{p^{mz}}\bigg{)}.$$
Since $D$ is finite and
$$\sum_{p\in D}\sum_{m=1}^\infty (-1)^m \frac{u(p)^m}{p^{mz}}=\sum_{p\in D}\frac{1}{1+\frac{u(p)}{p^{z}}},$$
we obtain that $\sum_{p\in D}\sum_{m=1}^\infty (-1)^m \frac{u(p)^m}{p^{mz}}$ converges absolutely in $\HH_0$.
This implies that, $\sum_{n=1}^\infty\frac{u^{-1}(n)}{n^z}$ converges absolutely in $\HH_0$ (\cite{tenenbaumlivro} page 106, Theorem 2). Since $h_D=f\ast u^{-1}$ we obtain that the convergence of $\sum_{n=1}^\infty \frac{f(n)}{n^c}$ implies the convergence of $\sum_{n=1}^\infty \frac{h_D(n)}{n^c}$ (\cite{tenenbaumlivro} p. 122, Notes 1.1), completing the proof of the claim.

\noindent Let $\mathcal{F}_n^\infty$ be the sigma algebra generated by the random variables $\{f(p):p\in\mathcal{P}\mbox{ and }p\geq n\}$. The tail sigma algebra of $\mathcal{F}$, denoted by $\mathcal{F}^*$ is the sigma algebra
$$\mathcal{F}^*=\bigcap_{n=1}^\infty \mathcal{F}_n^\infty.$$
Elements of $\mathcal{F}^*$ are called tail events. The Kolmogorov zero or one law states that every tail event has either probability zero or one. Recall that $D=D(k)$ and $E_{h_D,c}\in\mathcal{F}_k^\infty$. The claim \ref{caldal1} gives that $E_{f,c}=E_{h_D,c}$. In particular, $E_{f,c}\in \mathcal{F}_k^\infty$ $\forall k\in\NN$. \end{proof}

\begin{corollary}\label{caldal} Let $0<\alpha<1/2$. The following are tail events:
$$[M_f(x)=o(x^{1-\alpha+\epsilon})\mbox{ for all }\varepsilon>0 ].$$
$$[M_f(x)=o(x^{1-\alpha-\epsilon})\mbox{ for some }\varepsilon>0 ].$$
\end{corollary}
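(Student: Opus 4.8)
The plan is to reduce the statement to Proposition~\ref{caldal0} by expressing each event in terms of convergence of the random Dirichlet series $F_\omega(z)=\sum_{n=1}^\infty\frac{f_\omega(n)}{n^z}$, which converges absolutely on $\HH_1$ since $|f(n)|\le1$. Writing $E_{f,c}$ for the events of Proposition~\ref{caldal0}, the first thing I would prove is the identity
$$[M_f(x)=o(x^{1-\alpha+\epsilon})\ \forall\epsilon>0]=\bigcap_{m\ge1}E_{f,\,1-\alpha+1/m}.$$
For "$\subseteq$": if $M_{f_\omega}(x)=o(x^{1-\alpha+\epsilon})$ for every $\epsilon>0$, then for a fixed $m$ pick any $\epsilon<\min(1/m,\alpha)$; since $o(\cdot)$ implies boundedness, $\sup_x|M_{f_\omega}(x)|\,x^{-(1-\alpha+\epsilon)}<\infty$, and because $0<1-\alpha+\epsilon<1$, Theorem~\ref{abell} gives convergence of $F_\omega$ on $\HH_{1-\alpha+\epsilon}$, so that $1-\alpha+1/m\in\HH_{1-\alpha+\epsilon}$ gives $\omega\in E_{f,1-\alpha+1/m}$. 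For "$\supseteq$": if $\omega$ lies in every $E_{f,1-\alpha+1/m}$, then given $\epsilon>0$ choose $m$ with $1/m<\epsilon$; convergence of $\sum_n f_\omega(n)\,n^{-(1-\alpha+1/m)}$ together with partial summation (the exponent $1-\alpha+1/m$ being positive) yields $M_{f_\omega}(x)=o(x^{1-\alpha+1/m})=o(x^{1-\alpha+\epsilon})$.

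For the second event I would prove, with $m_0:=\min\{m:1-\alpha-1/m>1/2\}$ (finite because $\alpha<1/2$), the identity
$$[M_f(x)=o(x^{1-\alpha-\epsilon})\ \text{for some }\epsilon>0]=\bigcup_{m\ge m_0}E_{f,\,1-\alpha-1/m}.$$
Here "$\subseteq$" uses that $M_{f_\omega}(x)=o(x^{1-\alpha-\epsilon})$ for some $\epsilon>0$ implies the same for every smaller exponent, so one may assume $0<\epsilon<\min(1-\alpha,\tfrac12-\alpha)$, whence $1-\alpha-\epsilon\in(\tfrac12,1)$; Theorem~\ref{abell} then gives convergence of $F_\omega$ on $\HH_{1-\alpha-\epsilon}$, and choosing $m\ge m_0$ with $1/m<\epsilon$ puts $\omega$ in $E_{f,1-\alpha-1/m}$. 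Conversely, membership in $E_{f,1-\alpha-1/m}$ for some $m\ge m_0$ gives, again by partial summation, $M_{f_\omega}(x)=o(x^{1-\alpha-1/m})$, which is exactly the event in question with $\epsilon=1/m$.

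To finish, observe that every exponent occurring in the two identities exceeds $1/2$: $1-\alpha+1/m>1-\alpha>1/2$ in the first, and $1-\alpha-1/m>1/2$ for $m\ge m_0$ in the second. Hence each $E_{f,c}$ that appears is a tail event by Proposition~\ref{caldal0}, and a countable intersection (respectively union) of tail events is again a tail event, completing the proof. The only points needing any care are the two "shrink $\epsilon$" reductions — arranged so that Theorem~\ref{abell} applies (exponent below the abscissa $1$ of absolute convergence) and Proposition~\ref{caldal0} applies (exponent above $1/2$) — together with the elementary fact that convergence of $\sum_n a_n\,n^{-\sigma_0}$ with $\sigma_0>0$ forces $\sum_{n\le x}a_n=o(x^{\sigma_0})$; none of these is a genuine obstacle, since all the real content sits in Proposition~\ref{caldal0}.
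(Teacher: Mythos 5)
Your proof is correct and follows essentially the same route as the paper: both express the two events as a countable intersection, respectively union, of the tail events $E_{f,c}$ of Proposition~\ref{caldal0}, passing between Dirichlet-series convergence and partial-sum bounds via partial summation (Theorem~\ref{abell}) in one direction and Kronecker's lemma in the other. Your only addition is the explicit care that every exponent $c$ used exceeds $1/2$ (the restriction to $m\ge m_0$ in the union), a point the paper's one-line proof glosses over.
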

\begin{proof} Let $E_{f,c}$ be as in Proposition \ref{caldal1}. Recall that if the Dirichlet series $\sum_{k=1}^\infty\frac{f(k)}{k^{\sigma_0}}$ converges, then it converges for all $\sigma>\sigma_0$. Thus, by Kroencker's Lemma and by partial summation (see the proof Theorem \ref{abell}):
\begin{align*}
&[M_f(x)=o(x^{1-\alpha+\epsilon})\mbox{ for all }\varepsilon>0 ]=\bigcap_{n=1}^\infty E_{f,\alpha+n^{-1}},\\
&[M_f(x)=o(x^{1-\alpha-\epsilon})\mbox{ for some }\varepsilon>0 ]=\bigcup_{n=1}^\infty E_{f,\alpha-n^{-1}}.
\end{align*}
\end{proof}
\begin{lemma}\label{polo} Let $f:\NN\to[0,1]$ and for $x>0$, $L(1+x)=\sum_{k=1}^{\infty}\frac{f(k)}{k^{1+x}}$. Let $a,b:[0,\infty)\to (0,1]$ be such that
$a(t)\leq b(t)$ for all $t$, $\lim _{t\to\infty}b(t)=0$ and $b(t)-a(t)\ll a^2(t)$. Then as $t\to\infty$:
$$L(1+a(t))=L(1+b(t))+O(1).$$
\end{lemma}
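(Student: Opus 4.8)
The plan is to express the difference as a single Dirichlet series with nonnegative terms and then bound it term by term. Writing
\[
L(1+a(t)) - L(1+b(t)) = \sum_{k=1}^\infty \frac{f(k)}{k^{1+a(t)}}\left(1 - \frac{1}{k^{\,b(t)-a(t)}}\right),
\]
we see that, since $0\le f(k)\le 1$ and $a(t)\le b(t)$, every summand is nonnegative. (Convergence of both series is immediate from $f\le 1$ and $a(t),b(t)>0$.) Hence it suffices to bound this quantity from above; in fact the statement will hold with a one-sided $O(1)$.

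First I would apply the elementary inequality $1-e^{-u}\le u$ for $u\ge 0$, with $u=(b(t)-a(t))\log k$, to obtain
\[
0 \le L(1+a(t)) - L(1+b(t)) \le \bigl(b(t)-a(t)\bigr)\sum_{k=1}^\infty \frac{\log k}{k^{1+a(t)}}.
\]
Then I would estimate $S(x):=\sum_{k=1}^\infty \frac{\log k}{k^{1+x}}\ll x^{-2}$ for $x\in(0,1]$. The quickest route is to note $S(x)=-\zeta'(1+x)$ and use the Laurent expansion $\zeta(s)=\tfrac{1}{s-1}+O(1)$ near $s=1$, whence $-\zeta'(1+x)=x^{-2}+O(1)\ll x^{-2}$. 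Alternatively, one compares $S(x)$ with $\int_1^\infty \frac{\log u}{u^{1+x}}\,du = x^{-2}$: since $u\mapsto \frac{\log u}{u^{1+x}}$ is increasing only on $[1,e^{1/(1+x)}]\subset[1,e]$, one splits off the $O(1)$ many initial terms and bounds the remaining monotone tail by the integral.

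Finally, combining these two estimates with the hypothesis $b(t)-a(t)\ll a(t)^2$ yields
\[
0 \le L(1+a(t)) - L(1+b(t)) \ll \frac{b(t)-a(t)}{a(t)^2} \ll 1,
\]
which is exactly the asserted $L(1+a(t))=L(1+b(t))+O(1)$. I do not expect a genuine obstacle here; the only point needing a line of care is the monotonicity analysis underlying the integral comparison for $S(x)$ (or simply invoking the Laurent expansion of $\zeta$ at $s=1$), everything else being routine.
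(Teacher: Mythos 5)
Your proposal is correct and follows essentially the same route as the paper: both bound the termwise difference $k^{-a}-k^{-b}$ by $(b-a)\log k\cdot k^{-a}$ (you via $1-e^{-u}\le u$, the paper via integrating $\psi_k'$), then invoke $-\zeta'(1+x)\sim x^{-2}$ and the hypothesis $b-a\ll a^2$. No gaps.
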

\begin{proof} Denote $a=a(t)$ and $b=b(t)$. Let $k\in\NN$ and $\psi_k(x)=\exp(-x\log k)$. Hence
$$|\psi_k(a)-\psi_k(b)|\leq \int_{a}^b |\psi_k'(x)|dx=\log k\int_{a}^b |\psi_k(x)|dx\leq (b-a)\psi_k(a).$$
Let $x>0$ and $\zeta(1+x)=\sum_{k=1}^\infty\frac{1}{k^{1+x}}$. If $x>0$ is small, a well known fact is that the Riemann $\zeta$ is a meromorphic function with a simple pole at $z=1$ with residue $1$. Hence for $x>0$, $\sum_{k=1}^\infty \frac{\log k}{k^{1+x}}=|\zeta'(1+x)|\sim \frac{1}{x^2}$. This combined with the estimative for $\psi_k$ gives:
\begin{align*}
|L(1+a)-L(1+b)|=&\bigg{|}\sum_{k=1}^\infty \bigg{(}\frac{f(k)}{k^{1+a}}-\frac{f(k)}{k^{1+b}}\bigg{)}\bigg{|}\\
\leq& \sum_{k=1}^\infty \frac{f(k)}{k}|\psi_k(a)-\psi_k(b)|\\
\leq& (b-a)\sum_{k=1}^\infty \frac{\log k}{k^{1+a}}\\
=&|(b-a)\zeta'(1+a)|\\
\ll& \frac{(b-a)}{a^2}=O(1).
\end{align*}
\end{proof}
\subsection{Extension of random analytic functions to half planes.}\label{era}
\begin{proposition}\label{mensu} Let $(\Omega,\mathcal{F},\PP)$ be a probability space and $f:\HH_1\times\Omega\to\CC$ be a random function such that
$f_\omega:\HH_1\to\CC$ is analytic for all $\omega\in\Omega$. Then, for each fixed $c<1$, the following set is an element of $\mathcal{F}$:
$$A_f:=\{\omega\in\Omega:f_\omega\mbox{ has analytic extension to  }\HH_{c}\}.$$
\end{proposition}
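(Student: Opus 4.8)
The plan is to exhibit $A_f$ as a countable Boolean combination of events each determined by finitely many Taylor coefficients of $f_\omega$, for which measurability is immediate.

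\textbf{Step 1: measurability of the Taylor data.} First I would check that for a fixed $a\in\HH_1$ and $n\geq 0$ the map $\omega\mapsto c_n(a,\omega)$ is measurable, where $c_n(a,\omega)$ denotes the $n$-th Taylor coefficient of $f_\omega$ at $a$. This follows from the Cauchy formula on a small circle $|s-a|=r$ with $0<r<\operatorname{Re}(a)-1$: the integral is a limit of Riemann sums over finitely many points of that circle, and this is legitimate because $s\mapsto f(s,\omega)$ is continuous for each $\omega$ while $\omega\mapsto f(s,\omega)$ is measurable for each $s$. Consequently the radius of convergence $\rho(a,\omega)=\bigl(\limsup_n|c_n(a,\omega)|^{1/n}\bigr)^{-1}$ is measurable in $\omega$ for each fixed $a$, and it satisfies $\rho(a,\omega)\geq\operatorname{Re}(a)-1$.

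\textbf{Step 2: encoding analytic continuation by finite chains.} Fix the base point $z_\ast=2\in\HH_1$ and a countable dense set $Q\subset\HH_c$ containing $z_\ast$. To each finite sequence $\sigma=(a_0,\dots,a_m)$ with $a_0=z_\ast$ and all $a_i\in Q$ I attach, by a finite recursion, germs $g_0,\dots,g_m$ obtained by successive re-expansion: $g_0$ is the germ of $f_\omega$ at $a_0$; given $g_i$, with radius $\rho_i$, I call $\sigma$ \emph{admissible for $\omega$} through step $i$ if $|a_{i+1}-a_i|<\rho_i$, and then take $g_{i+1}$ to be $g_i$ re-expanded at $a_{i+1}$. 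Since the coefficients of $g_{i+1}$ are absolutely convergent series in those of $g_i$, they are measurable in $\omega$, as is $\rho_{i+1}$; hence the event ``$\sigma$ is admissible for $\omega$'' and, on it, the endpoint value $V(\sigma,\omega):=g_m(a_m)$ are measurable. Because $\HH_c$ is convex, hence simply connected, the monodromy theorem should yield $A_f=B_1\cap B_2$, where $B_1$ is the event that every $w\in Q$ is the endpoint of some chain admissible for $\omega$, and $B_2$ is the event that any two chains admissible for $\omega$ with a common endpoint take the same value there. Both $B_1$ and $B_2$ are countable unions and intersections of the measurable events above, so $A_f\in\mathcal F$.

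\textbf{Step 3: the main obstacle.} The delicate point is precisely the identity $A_f=B_1\cap B_2$, i.e. that extendability to the continuum $\HH_c$ is faithfully detected by these discrete conditions over the countable set $Q$. The inclusion $A_f\subseteq B_1\cap B_2$ is routine: a genuine extension $g$ on $\HH_c$ forces $\rho(a,\omega)\geq\operatorname{dist}(a,\partial\HH_c)$ for every $a\in\HH_c$, so each $w\in Q$ is reached by a polygonal chain in $Q$ hugging the segment $[z_\ast,w]$, every germ along such a chain being a germ of $g$. The reverse inclusion is where the work lies: out of the chain values one must reconstruct a single analytic function on all of $\HH_c$. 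Here $B_2$ forces the germ attached to each $w\in Q$ to be well defined; density of $Q$ together with the continuity of re-expansion in the center forces these germs to be mutually consistent; the impossibility of an admissible chain ``getting stuck'' inside $\HH_c$ forces the germ at $w$ to have radius $\geq\operatorname{dist}(w,\partial\HH_c)$, so that the associated disks cover $\HH_c$; and simple connectivity of $\HH_c$ makes the glued function single valued. This step is in effect a re-run of the proof of the monodromy theorem; the remaining measurability assertions (Cauchy integrals as limits of Riemann sums, re-expansion coefficients as convergent series in the old ones) are bookkeeping.
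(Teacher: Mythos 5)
Your Step 1 is fine (and parallels the paper, which gets measurability of the Taylor data by writing each derivative as a pointwise limit of difference quotients rather than as Cauchy integrals). The gap is in Steps 2--3: the claimed identity $A_f=B_1\cap B_2$ is false, so the reverse inclusion you defer to ``a re-run of the monodromy theorem'' cannot be carried out. The monodromy theorem needs analytic continuability of the germ along \emph{every} path in the simply connected region; your event $B_1$ only records that each $w\in Q$ is reachable by \emph{some} admissible chain, and $B_2$ only rules out multivaluedness. Neither condition detects a single-valued singularity located at a point of $\HH_c\setminus\HH_1$ that misses the countable set $Q$. Concretely, fix $z_0\notin Q$ with $c<\operatorname{Re}(z_0)<1$ and suppose $f_\omega(z)=\frac{1}{z-z_0}$ on $\HH_1$ (or $\exp\bigl(1/(z-z_0)\bigr)$). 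Every $w\in Q$ is the endpoint of an admissible chain (take points of $Q$ closely spaced along a polygonal path avoiding a neighborhood of $z_0$, each step shorter than the current radius $|a_i-z_0|$), and all chain continuations are germs of the same single-valued function, so the endpoint values agree; hence $\omega\in B_1\cap B_2$, yet $f_\omega$ has no analytic extension to $\HH_c$. Your heuristic that chains ``cannot get stuck'' and therefore the germ at $w$ has radius $\geq\operatorname{dist}(w,\partial\HH_c)$ is exactly what fails here: the germ at $w$ has radius $|w-z_0|$, which can be much smaller. Since in the paper this proposition is applied to functions like $\zeta(z)F(z)$, whose failure to extend is precisely of pole type, this is not a pathological edge case.

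The fix is to impose the radius condition directly, and then the chain/monodromy machinery becomes unnecessary. This is what the paper does: it takes the single sequence of centers $k+1$, $k\in\NN$, lets $R_k(\omega)$ be the radius of convergence of the Taylor series of $f_\omega$ at $k+1$ (measurable, being a $\limsup$ of measurable quantities), and proves $A_f=\bigcap_{k\in\NN}[R_k\geq k+1-c]$. One inclusion is the standard fact that an analytic function on $\HH_c$ has Taylor radius at least the distance to $\partial\HH_c$; for the other, the balls $B(k+1,k+1-c)$ increase and exhaust $\HH_c$ together with $\HH_1$, each ball meets $\HH_1$ in a nonempty open convex set, so the identity theorem glues the power series to $f_\omega$ consistently and yields a well-defined extension, with no monodromy issue because each new piece overlaps the already-constructed domain in a connected set. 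If you want to keep your dense set $Q$, replacing $B_1\cap B_2$ by the event ``for every $w\in Q$, the Taylor radius of $f_\omega$'s continuation at $w$ is at least $\operatorname{Re}(w)-c$'' (with the germ at $w$ obtained by a fixed chain hugging the segment $[z_\ast,w]$, or more simply by working only with centers in $\HH_1$ as the paper does) gives a correct and measurable characterization.
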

\begin{proof} Since $f$ is a random function, for fixed $z\in\HH_1$, the complex random variables $f(z)$ and $\{f(z+k^{-1})\}_{k\in\NN}$
are $\mathcal{F}-$measurable. For each $\omega\in\Omega$ we have that $f_{\omega}:\HH_1\to\CC$ is analytic. Hence, for all $z\in\HH_1$ the limit
\begin{equation}\label{deriv}
f_{\omega}^{(1)}(z):=\lim_{k\to\infty} \frac{f_{\omega}(z+k^{-1})-f_{\omega}(z)}{k^{-1}}
\end{equation}
exists and it is a complex random variable measurable in $\mathcal{F}$. Hence $f^{(1)}:\HH_1\times\Omega\to\CC$ is a random function which is analytic for all $\omega\in\Omega$, since for each $\omega\in\Omega$, it is the derivative of $f_{\omega}$. By applying these arguments inductively, we conclude that for all $n\in\NN$, $f^{(n)}:\HH_1\times\Omega\to\CC$ given by $f_{\omega}^{(n)}:=\frac{d^n}{dz^n}f_{\omega}(z)$ is a random analytic function such that for each $z\in\HH_1$, $f^{(n)}(z)$ is $\mathcal{F}-$measurable. Denote $B(a,\delta):=\{z\in\CC:|z-a|<\delta\}$. We recall the following result from Complex-Analysis (c.f \cite{conway}, page 72, Theorem 2.8):
\begin{claim}\label{power}Let $G$ be an open connected set, $h:G\to\CC$ an analytic function, $a\in G$ and $R>0$ such that $B(a,R)\subset G$. Then for all $z\in B(a,R)$ we have that $h(z)=\sum_{n=0}^\infty \frac{h^{(n)}(a)}{n!}(z-a)^n$. Moreover the radius of convergence of this power series is greater or equal to $R$.
\end{claim}
Denote $f^{(0)}=f$ and for each $k\in\NN$,
$$\frac{1}{R_k(\omega)}:=\limsup_{n\to\infty}\bigg{|}\frac{1}{n!}f_{\omega}^{(n)}(k+1)\bigg{|}^{\frac{1}{n}}.$$
Hence $R_k$ also is $\mathcal{F}-$measurable. We recall that $R_k(\omega)$  is the radius of convergence of the power series of the complex analytic function $f_{\omega}$ at the point $z=k+1$ (c.f. \cite{conway}, chapter III ).
We claim that:
\begin{equation}\label{mens}
A_f=\bigcap_{k\in\NN}[R_k \geq k+1-c].
\end{equation}
A direct application of the claim \ref{power} gives that $A_f\subset\bigcap_{k\in\NN}[R_k \geq k+1-c]$, since $B(k+1,k+1-c)\subset \HH_c$, for all $k\in\NN$. To prove the other inclusion, let $A_k:=B(k+1,k+1-c)\setminus{\HH_1}$. Observe that $A_k\subset A_{k+1}$ for all $k\in\NN$ and that $\HH_c=\bigcup_{k\in\NN}\HH_1\cup A_k$. For each $\omega\in \bigcap_{k\in\NN}[R_k \geq k-c]$ and $k\in\NN$ define $H_{\omega,k}:B(k,k+1-c)\to\CC$ by
$$H_{\omega,k}(z):=\sum_{n=0}^\infty\frac{f_{\omega}^{(n)}(k+1)}{n!}(z-(k+1))^n.$$
By Claim \ref{power}, the assumption that $R_k(\omega)\geq k+1-c$ gives that $H_{\omega,k}$ is analytic in the open ball $B(k+1,k+1-c)$ and  $H_{\omega,k}(z)=f_\omega(z)$ for each $z\in\HH_1\cap B(k+1,k+1)$. Hence $H_{\omega,k}(z)=f_\omega(z)$ for all $z\in\HH_1\cap B(k+1,k+1-c)$. This follows from the fact that, if two analytic functions defined in a open connected set $R_1$ coincide in an open ball $B\subset R_1$, then these analytic functions coincide in all $R_1$ (see \cite{conway} Theorem 3.7, pg 78). Hence $G_{\omega,k}:\HH_1\cup A_k\to\CC$ given by
$$G_{\omega,k}(z):=\begin{cases} f_{\omega}(z),&\mbox{ if }z\in\HH_1\setminus B(k+1,k+1-c);\\
H_{\omega,k}(z),&\mbox{ if }z\in B(k+1,k+1-c)\end{cases}$$
is an analytic extension of $ f_{\omega}$ to $\HH_1\cup A_k$. Since this is an open connected set of $\CC$, $G_{\omega,k}$ is the unique analytic function defined in $\HH_1\cup A_k$ that coincides with $f_{\omega}$ in $\HH_1$. Observe that for each $k\in\NN$, $\HH_1\cup{A_k}\subset\HH_{1}\cup A_{k+1}$ and these are open connected sets. Hence the unique analytic extension of $G_{\omega,k}$ to $\HH_{1}\cup A_{k+1}$ is $G_{\omega,k+1}$. This implies that for each $\omega\in \bigcap_{k\in\NN}[R_k \geq k-c]$, $G_\omega:\HH_c\to\CC$ given by
$$G_\omega(z):=\begin{cases}f_\omega(z),&\mbox{ if }z\in\HH_1;\\
                             G_{\omega,k}(z),&\mbox{ if } z\in A_k,\mbox{ for }k\in\NN                         \end{cases}$$
is well defined and analytic. By well defined we mean that for each $z \in \HH_c \setminus \HH_1$, there exists $k_0$ such that $z\in A_k$ for all $k\geq k_0$ and the value $G_\omega(z)=G_{\omega,k}(z)$ does not depend on $k$. The analyticity  of $G_\omega$  follows from the fact that for each fixed $z\in\HH_c$, there is an small $\delta>0$ and $k\in\NN$ such that $B(z,\delta)\subset \HH_1 \cup A_k$. Hence for all $w \in B(z,\delta)$, $G_\omega(w)=G_{\omega,k}(w)$. Since $G_{\omega,k}$ is holomorphic at $z$, $G_\omega$ also is holomorphic at $z$. Hence $G_\omega$ is holomorphic for all $z\in\HH_c$. This completes the proof of $(\ref{mens})$ which gives that $A_f$ is the countable intersection of $\mathcal{F}-$measurable sets. \end{proof}

{\small{\sc Departamento de Matem\'atica, Universidade Federal de Minas Gerais, Av. Ant\^onio Carlos, 6627, CEP 31270-901, Belo Horizonte, MG, Brazil.} \\
\textit{Email address:} aymone.marco@gmail.com}

{\small{\sc Instituto Nacional de Matem\'atica pura e Aplicada, Estrada Dona Castorina 110, Jardim Bot\^anico, CEP 22460-320, Rio de Janeiro, RJ, Brazil.} \\
\textit{Email address:} vladas@impa.br}

\end{document}